\newcommand{\dd}{ {\mbox{\sl{\cursive{d}\,}}} }
\newcommand{\dr}{ {\mbox{\sl{\cursive{d}\,}}} }
\theoremstyle{plain}
\newtheorem{thm}{Theorem}[section]
\newtheorem{cor}[thm]{Corollary}
\newtheorem{lem}[thm]{Lemma}
\newtheorem{prop}[thm]{Proposition}
\theoremstyle{definition}
\theoremstyle{remark}
\newtheorem{rem}[thm]{Remark}
\numberwithin{equation}{section}
\newcommand{\average}{{\mathchoice {\kern1ex\vcenter{\hrule height.4pt
width 6pt depth0pt} \kern-9.7pt} {\kern1ex\vcenter{\hrule
height.4pt width 4.3pt depth0pt} \kern-7pt} {} {} }}
\def\R{\mathbb{R}}
\def\Q{\mathbb{Q}}
\def\N{\mathbb{N}}
\title[The Dirichlet problem for nonlocal operators]{The Dirichlet
problem \\
for nonlocal operators with singular kernels: \\
convex and nonconvex domains}
\author{Xavier Ros-Oton}
\address{The University of Texas at Austin,
Department of Mathematics, 2515 Speedway, Austin, TX 78751, USA}
\email{ros.oton@math.utexas.edu}
\author{Enrico Valdinoci}
\address{Weierstrass Institut f\"ur Angewandte Analysis
und Stochastik, Mohrenstrasse 39, 10117 Berlin, Germany,
and Dipartimento di Matematica, Universit\`a degli studi di Milano,
Via Saldini 50, 20133 Milan, Italy, and
Istituto di Matematica Applicata e Tecnologie Informatiche,
Consiglio Nazionale delle Ricerche,
Via Ferrata 1, 27100 Pavia, Italy}
\email{enrico@mat.uniroma3.it}
\subjclass[2010]{35B65, 35R11}
\keywords{Regularity theory, integro-differential equations, fractional Laplacian, anisotropic media, rough kernels.}
\begin{document}

\begin{abstract}
We study the interior regularity of solutions to the Dirichlet problem $Lu=g$ in $\Omega$,
$u=0$ in $\R^n\setminus\Omega$, for anisotropic operators of fractional type
$$ Lu(x)= \int_{0}^{+\infty}\,d\rho
\int_{S^{n-1}}\,da(\omega)\,
 \frac{
2u(x)-u(x+\rho\omega)-u(x-\rho\omega)}{\rho^{1+2s}}.$$
Here, $a$ is any measure on~$S^{n-1}$ (a prototype example
for~$L$ is given
by the sum of one-dimensional fractional Laplacians in fixed,
given directions).

When $a\in C^\infty(S^{n-1})$ and $g$ is $C^\infty(\Omega)$, solutions are known to be $C^\infty$
inside~$\Omega$ (but not up to the boundary).
However, when $a$ is a general measure, or even when $a$ is $L^\infty(S^{n-1})$,
solutions are only known to be $C^{3s}$ inside $\Omega$.

We prove here that, for general measures $a$, solutions are $C^{1+3s-\epsilon}$
inside $\Omega$ for all $\epsilon>0$ whenever $\Omega$ is convex.
When $a\in L^{\infty}(S^{n-1})$, we show that the same holds in all $C^{1,1}$ domains.
In particular, solutions always possess a classical first derivative.

The assumptions on the domain are sharp,
since if the domain is not convex and the measure $a$
is singular, we construct an explicit counterexample for which $u$ is
\emph{not} $C^{3s+\epsilon}$ for any $\epsilon>0$ -- even if $g$ and $\Omega$ are $C^\infty$.
\end{abstract}

\maketitle

\section{Introduction}

Recently, a great attention in the literature has been devoted
to the study of equations of elliptic type
with fractional order $2s$, with $s\in(0,1)$.
The leading example of
the operators considered is the fractional Laplacian
\begin{equation}\label{L22}
(-\Delta)^s u(x)= \int_{\R^n}
\frac{2u(x)-u(x+y)-u(x-y)}{|y|^{n+2s}}dy.
\end{equation}
Several similarities arise between this operator and
the classical Laplacian: for
instance, the fractional Laplacian enjoys
a ``good'' interior regularity theory in H\"older spaces
and in Sobolev spaces (see e.g.~\cite{land}).
Nevertheless,
the fractional Laplacian also presents some striking difference
with respect to the classical case: for example,
solutions are in general not uniformly Lipschitz continuous
up to the boundary (see e.g.~\cite{getoor, SRB})
and fractional harmonic functions are locally dense in~$C^k$
(see~\cite{dense}), in sharp
contrast with respect to the classical case.\medskip

A simple difference between the fractional and the classical Laplacians
is also given by the fact that the classical Laplacian
may be reconstructed as the sum of finitely many one-dimensional
operators, namely one can write
\begin{equation}\label{PL}
\Delta =
\partial^2_{1}+\cdots+\partial^2_{n},\end{equation}
and each~$\partial^2_{i}$ is indeed the Laplacian in a given direction.
This phenomenon is typical for the classical case
and it has no counterpart in the fractional setting,
since the operator in~\eqref{L22} cannot be reduced
to a finite sets of directions.\medskip

Nevertheless, in order to study equations
in anisotropic media,
it is important to understand operators
obtained by the superposition of fractional one-dimensional (or
lower-dimensional) operators, or, more generally, by
the superposition of different operators in different
directions, see~\cite{SROP}.
For this reason,
we consider here the anisotropic integro-differential operator
\begin{equation}\label{L}
Lu(x)= \int_{0}^{+\infty}\,d\rho
\int_{S^{n-1}}\,da(\omega)\,
 \frac{
2u(x)-u(x+\rho\omega)-u(x-\rho\omega)}{\rho^{1+2s}},
\end{equation}
with $s\in(0,1)$.
Here $a$ is a non-negative measure on~$S^{n-1}$
(called in jargon the ``spectral measure''), and we suppose that
it satisfies the following ``ellipticity'' assumption
$$ \inf_{\varpi\in S^{n-1}} \int_{S^{n-1}}|\omega\cdot\varpi|^{2s}\,da(\omega)\ge\lambda
\;{\mbox{ and }}\;
\int_{S^{n-1}}\,da \le \Lambda,$$
for some~$\lambda$, $\Lambda>0$.
The simplest model example is when $a$ is absolutely
continuous with respect to the standard measure on~$S^{n-1}$
(that is~$da(\omega)=a(\omega)\,d{\mathcal{H}}^{n-1}(\omega)$,
for a suitable $L^1$ function~$a:S^{n-1}\to[0,+\infty]$).
In this
case, thanks to the polar coordinate representation,
the operator $L$ may be written (up to a multiplicative
constant) as
\begin{equation}\label{L2}
Lu(x)= \int_{\R^n}\bigl(2u(x)-u(x+y)-u(x-y)\bigr)
\frac{a\left(y/|y|\right)}{|y|^{n+2s}}dy.
\end{equation}
When~$a\equiv1$ in~\eqref{L2}
(i.e. when~$da\equiv d{\mathcal{H}}^{n-1}$
in~\eqref{L}), we have the particularly famous case of
the fractional Laplacian in~\eqref{L22}.

In general, the role of the measure~$a$ in~\eqref{L}
is to weight differently
the different spacial directions (hence we refer to it
as an ``anisotropy'').
In particular, we can also allow the measure~$a$ in~\eqref{L}
to be a sum of Dirac's Deltas.
Indeed, a quite stimulating example arises in the case in which
\begin{equation}\label{8isdfv-0bg}
a=\sum_{i=1}^n \delta_{e_i} +\delta_{-e_i},
\end{equation}
where, as usual, $\{e_1,\cdots,e_n\}$ is the standard Euclidean base
of~$\R^n$: then the operator in~\eqref{L}
becomes
\begin{equation}\label{RI}
(-\partial^2_{1})^s+\cdots+(-\partial^2_{n})^s,
\end{equation}
where~$(-\partial^2_{i})^s$ represents the one-dimensional
fractional Laplacian in the $i$th coordinate direction
(compare with~\eqref{PL}).

Notice that, in the limit case $s=1$, the operators \eqref{L} that we study are linear elliptic and translation invariant, and hence they are of the form $Lu=-\sum_{i,j}a_{ij}\partial_{ij}u$.
After an affine change of variables, the operator $L$ is just the Laplacian $-\Delta$.
Thus, for nonlocal equations $s\in(0,1)$, the class of linear and translation invariant operators
is much richer than in the local case, and presents several interesting features that are purely nonlocal.

Goal of this paper is to develop a regularity theory for solutions
of
\begin{equation}\label{eq}
\left\{ \begin{array}{rcll}
L u &=&g&\textrm{in }\Omega \\
u&=&0&\textrm{in }\R^n\backslash\Omega,
\end{array}\right.
\end{equation}
The class of solutions that we study are the weak ones, i.e.
the ones that have finite (weighted) energy
$$ \int_{\R^n}\,dx \int_\R \,d\rho
\int_{S^{n-1}}\,da(\omega)\,
 \frac{
\big( u(x)-u(x+\rho\omega)\big)^2
}{\rho^{1+2s}}\,<\,+\infty$$
and satisfy
\begin{eqnarray*}&& \frac12 \int_{\R^n}\,dx \int_\R \,d\rho
\int_{S^{n-1}}\,da(\omega)\,
 \frac{
\big( u(x)-u(x+\rho\omega)\big)\,
\big( \varphi(x)-\varphi(x+\rho\omega)\big)}{\rho^{1+2s}}\\ &&\qquad\,=\,
\int_{\R^n}\,dx \, g(x)\,\varphi(x),\end{eqnarray*}
for any~$\varphi\in C^\infty_0(\Omega)$.\medskip

When the nonlinearity~$g$ is regular enough
and the measure $a$ of the operator~$L$ is $C^\infty(S^{n-1})$
it is known that solutions of~\eqref{eq} in bounded domains
are, roughly speaking, smooth up to
an additional order $2s$ in the derivatives: i.e. for any~$\beta\in[0,+\infty)$
such that~$\beta+2s$ is not an integer, we have that~$u\in C^{\beta+2s}_{\rm loc}(\Omega)$,
thanks to the estimate
\begin{equation}\label{REH}
\|u\|_{C^{\beta+2s}(B_{r/2})}
\le C\, \bigl( \|g\|_{C^\beta(B_r)} + \|u\|_{L^\infty(\R^n)}\bigr),
\end{equation}
valid in every ball $B_r$ in $\Omega$;
see for instance Corollary 3.5 in \cite{SROP} and also \cite{silvestre,BFV}.
The constant $C$ in \eqref{REH} depends on $n$, $s$, $r$,
and the $C^{\beta}(S^{n-1})$ norm of the measure $a$ (the ``anisotropy'').
When $\beta+2s$ is an integer, then the same estimate holds with $\|u\|_{C^{\beta+2s-\epsilon}}$ (for any $\epsilon>0$) in the left hand side of \eqref{REH}.

In particular, solutions of~\eqref{eq}
are $C^\infty(\Omega)$ if so is~$g$ and the measure $a$,
but in general they are not better than~$C^s(\R^n)$,
i.e. they are smooth in the interior, but only H\"older continuous
at the boundary.
For instance, $(-\Delta)^s (1-|x|^2)_+^s$
is constant in~$B_1$ and provides an example of solution
which is not better than~$C^s(\R^n)$.\medskip

In the general case of operators as in~\eqref{L},
the situation becomes quite different, due to the lack of
regularity of the kernels outside the origin.
In this generality,
estimate~\eqref{REH} does not hold, and it gets replaced by
the weaker estimate
\begin{equation}\label{REH2}
\|u\|_{C^{\beta+2s}(B_{r/2})}
\le C\big(
\|g\|_{C^\beta(B_r)}+\|u\|_{C^\beta(\R^n)}
\big) ,
\end{equation}
see Theorem 1.1 in \cite{SROP}.
The constant $C$ in \eqref{REH2} depends on the measure $a$ only through $\lambda$ and $\Lambda$.
\medskip

Though estimates~\eqref{REH} and~\eqref{REH2} may look
similar at a first glance, the additional term~$\|u\|_{C^\beta(\R^n)}$
in~\eqref{REH2} prevents higher regularity results:
namely, since~$u$ is {\em not} in general~$C^\beta(\R^n)$ when~$\beta>s$,
it follows that~\eqref{REH2} is meaningful mainly when $\beta\le s$
and it cannot provide higher order regularity: namely, from it one can only
show that~$u\in{C^{3s}_{\rm loc}(\Omega)}$, even if one assumes~$g\in C^\infty(\overline\Omega)$.
\medskip

In the light of these observations, in general, when~$s<1/3$, one does
not have any control even on the first derivative of~$u$. Nevertheless,
we will prove here a higher regularity result as in~\eqref{REH}, up to an exponent larger than one, by
relating the
differentiability properties of the solution with the geometry of the
domain. Namely, we will show that in convex domains and for $g$ smooth enough the solution is
always~$C^{1+3s-\epsilon}_{\rm loc}(\Omega)$, for any measure $a$.

The same regularity result holds
true also in possibly non-convex domains with~$C^{1,1}$ boundary,
provided that the measure~$a$ is bounded, i.e.
if~$da(\omega)=a(\omega)\,d{\mathcal{H}}^{n-1}(\omega)$,
with~$a\in L^\infty(S^{n-1})$.

In further detail, the main result that we prove is the following:

\begin{thm}\label{MAIN}
Let~$\beta\in (0,1+s)$ and assume that~$\beta+2s$ is not an integer.
Assume that either
\begin{equation}\label{HYP1}
{\mbox{$\Omega$ is a convex, bounded domain,}}
\end{equation}
or
\begin{equation}
\label{HYP2}\begin{split}
& {\mbox{$\Omega$ is a bounded domain with~$C^{1,1}$ boundary}}\\
& {\mbox{and the measure $a$ in \eqref{L} is bounded.}}
\end{split}\end{equation}
Let $u$ be a weak solution to \eqref{eq}, with~$g\in C^\beta(\overline\Omega)$.

Then~$u\in C^{\beta+2s}_{\rm loc}(\Omega)$
and, for any~$\delta>0$,
\begin{equation*}
\|u\|_{C^{\beta+2s}(\Omega_\delta)}\le C\,
\|g\|_{C^\beta(\overline\Omega)},\qquad \beta\in(0,1+s)
\end{equation*}
where~$\Omega_\delta$ is the set containing all the points
in~$\Omega$ that have distance larger than~$\delta$ from~$\partial\Omega$
and $C>0$ depends also on~$\Omega$ and~$\delta$.
\end{thm}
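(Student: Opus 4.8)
The plan is to bootstrap from the known interior estimate \eqref{REH2} by improving the global regularity of $u$ step by step. The key observation is that \eqref{REH2} requires $\|u\|_{C^\beta(\R^n)}$ on the right-hand side, and the obstruction to iterating is that $u$ is only $C^s$ globally (because of the boundary behavior, $u\sim d^s$ near $\partial\Omega$). So the strategy is: (i) first establish the needed \emph{global} $C^\gamma$ regularity of $u$ for $\gamma$ as large as possible given the geometry, then (ii) feed this into \eqref{REH2} to upgrade the interior regularity. Since $u=0$ outside $\Omega$ and $u\in C^s(\R^n)$ with $u\sim d(x)^s$ near the boundary, the function $u$ itself will never be better than $C^s$ globally; the trick is instead to differentiate. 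The main point is that for \emph{convex} $\Omega$, a first-order \emph{incremental quotient} or directional derivative of $u$ in a direction pointing ``into'' the domain is monotone/controlled near the boundary, which is exactly what convexity buys.

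Concretely, I would proceed as follows. First, using \eqref{REH2} with $\beta\le s$ one gets $u\in C^{3s}_{\mathrm{loc}}(\Omega)$, which is the baseline. Next, the heart of the argument: show that under \eqref{HYP1} or \eqref{HYP2} the function $u$, after subtracting its boundary profile, enjoys better global regularity — more precisely, that translations $u(\cdot+h)-u(\cdot)$ (or a suitable second-order incremental quotient combined with the equation) are globally bounded in $C^{s}$ with a gain, uniformly as $h\to0$. For convex domains, if $e$ is a direction and $x_0\in\partial\Omega$, then $x_0-te\in\Omega^c$ implies (by convexity) that a whole cone is outside, so one can compare $u$ at $x$ and $x$ translated toward the interior and exploit that $u\ge0$ has a one-sided sign. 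This is the mechanism behind the ``$1+$'' in the exponent $1+3s-\epsilon$: one effectively gains one full derivative's worth of global control. Concretely I expect one proves a statement like: $\|u\|_{C^{s+1-\epsilon}(\R^n)}\le C\|g\|_{C^\beta}$, or at least the corresponding statement for first-order difference quotients, using convexity (resp.\ the $C^{1,1}$ boundary plus boundedness of $a$, where the regularity of the kernel density compensates for the weaker geometry). Once such a global bound is in hand, one differentiates the equation: $v=\partial_e u$ (in a suitable weak/incremental sense) solves $Lv=\partial_e g$ away from the boundary, $v$ is globally $C^{3s-\epsilon}$ or so, and then \eqref{REH2} applied to $v$ yields $v\in C^{\beta+2s-\epsilon}_{\mathrm{loc}}$, i.e.\ $u\in C^{1+\beta+2s-\epsilon}_{\mathrm{loc}}$; tracking the exponents carefully and optimizing gives $\beta+2s$ for $\beta<1+s$ (with the $\epsilon$-loss only at integer values).

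The main obstacle — and the technical core of the paper — is step (ii): establishing the improved \emph{global} (up to the boundary) regularity of the first difference quotients of $u$ using only convexity of $\Omega$ (without any smoothness of the kernel in $\omega$). Near the boundary $u$ behaves like $d^s$, whose first difference quotient in a generic direction blows up like $d^{s-1}$; convexity must be used to show that, in the ``good'' (inward) directions, this blow-up is actually one-sided and can be absorbed, so that a symmetrized second difference (which is what appears in $L$) stays controlled. I would handle this via a barrier argument: construct explicit super/subsolutions adapted to the convex geometry — e.g.\ comparing with $(d^+)^s$-type functions on half-spaces or on the supporting hyperplanes at each boundary point — and use the maximum principle for $L$ (available from the ellipticity assumption). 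The $C^{1,1}$-with-bounded-$a$ case is where one cannot use the one-sided convexity trick directly, so one instead exploits that $a\in L^\infty$ makes $L$ comparable (in a two-sided, non-pointwise sense) to operators with nicer kernels, allowing a flattening of the boundary and a perturbative argument. Finally, one must verify that the formal differentiation of the equation is legitimate in the weak-solution framework, which is a routine but necessary approximation/mollification step.
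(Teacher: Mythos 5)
Your proposal captures some correct high-level intuitions (the $d^s$ boundary behavior is the obstruction; convexity must be exploited geometrically; one should aim to gain roughly one derivative before the $2s$-lift), but the concrete mechanism you propose has a gap that would prevent it from closing, and it differs substantially from what the paper does.

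The gap: you propose to differentiate the equation, obtain $v=\partial_e u$ with $Lv=\partial_e g$, claim $v$ is ``globally $C^{3s-\epsilon}$ or so,'' and then feed $v$ into the estimate \eqref{REH2}. But $v=\partial_e u$ is \emph{not} globally Hölder --- near $\partial\Omega$ one has $u\sim d^s$, so $\partial_e u\sim d^{s-1}$ blows up, and is not even bounded, let alone $C^{3s-\epsilon}(\R^n)$. The same issue defeats the tentative claim $\|u\|_{C^{s+1-\epsilon}(\R^n)}\le C\|g\|_{C^\beta}$, which contradicts $u\sim d^s$ (you in fact say this yourself earlier). Convexity does not change the boundary exponent $s$; it does not give you a full extra derivative of \emph{global} regularity. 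So \eqref{REH2} cannot be applied to $\partial_e u$, and the bootstrap as you sketch it does not launch.

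What the paper does instead is genuinely different and sidesteps this entirely. It never tries to upgrade the global regularity of $u$ or of its derivatives. It works with \emph{weighted} Hölder norms $\|u\|_{\alpha;\Omega}^{(-s)}$ (Section~\ref{WN:SEC}) that explicitly allow derivatives to blow up like $d^{s-j}$ near $\partial\Omega$, so that the natural boundary behavior is encoded in the norm rather than fought against. To estimate these, one localizes: around each point $p$ at scale $R\sim \dd(p)$ one splits $u=\bar u+w$ with $\bar u=\eta u$ compactly supported and \emph{genuinely} globally $C^\alpha$ (after scaling), and $w=(1-\eta)u$ vanishing near $p$. One then applies the known interior estimate (Theorem 1.1(b) of~\cite{SROP}, i.e.\ essentially \eqref{REH2}) to $\bar u$, not to any derivative of $u$. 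The error $Lw$ is the dangerous term carrying the nonlocal boundary influence, and this is where convexity (resp.\ $C^{1,1}+a\in L^\infty$) is used: through the purely geometric line-integral lemmas of Section~\ref{S:2a} (e.g.\ Lemma~\ref{AT1}: a half-line from an interior point exits a convex set exactly once, so the set $\{\rho:\ \dd(p+\rho\omega)\le r\}$ is a single interval of length $\sim r$) and Section~\ref{S:2b} (coarea/covering estimates for level sets of the distance), one shows $[Lw]_{C^\alpha(B_{R/4})}\lesssim R^{-\alpha-s}(\ldots)$ in Lemma~\ref{WW}. Iterating this (Theorem~\ref{CS} and Corollary~\ref{IT}) in the weighted norm scale gives the result. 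So the true role of convexity is not a one-sided monotonicity of difference quotients or a barrier comparison, but an estimate on how often and for how long a ray through the domain grazes the boundary. Barriers ($(1-|x|^2)_+^s$, Lemma~\ref{phi}) appear in the paper only in the construction of the counterexample of Theorem~\ref{MAIN2}, not in the proof of Theorem~\ref{MAIN}.

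If you want to salvage your outline, the missing ideas you should add are: (i) weighted Hölder norms as the right functional framework, replacing ``global regularity of $\partial u$''; (ii) the cutoff decomposition $u=\bar u+w$ and the observation that \eqref{REH2} can legitimately be applied only to the localized, scaled $\bar u$; and (iii) the geometric integral estimates controlling $Lw$, which is where the hypotheses \eqref{HYP1}/\eqref{HYP2} actually enter.
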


We think that it is a very interesting open problem
to establish whether or not a higher regularity theory
holds true under the assumptions of Theorem~\ref{MAIN}
(for instance, it is an open question to establish if
solutions are~$C^\infty$ if so are the data,
or if the $C^{1+3s}$ regularity is optimal also in this case).
As far as we know, there are natural examples
of smooth solutions (such as the one presented in Lemma~\ref{phi}),
but a complete regularity theory only holds under additional
regularity assumptions on the ``anisotropy'' $a$
(see~\cite{silvestre,BFV,SROP}) and the general picture seems to
be completely open.

\begin{rem}
Notice that under the hipotheses of Theorem \ref{MAIN} any weak solution to \eqref{eq} is bounded and classical, in the sense that they are $C^{2s+\epsilon}$
in the interior of the domain (see e.g. \cite[Section 5]{ROsurvey}).
Thus, in the proof of our results we may use the pointwise definition of the operator $L$.
\end{rem}

The result of Theorem~\ref{MAIN} also plays an important
role in the proof of a Pohozaev type identity
for anisotropic fractional operators, see~\cite{SROV}.
\medskip

At a first glance, it may also look surprising that the regularity
of the solution in Theorem~\ref{MAIN}
depends on the shape of the domain.
But indeed the convexity assumption
in Theorem~\ref{MAIN} cannot in general
be avoided, as next result points out:

\begin{thm}\label{MAIN2}
Let~$L$ be as in~\eqref{RI}, with~$n=2$.
There exists
a bounded domain in~$\R^2$ with $C^\infty$ boundary and a solution~$u\in C^s(\overline\Omega)$ of
\begin{equation*}
\left\{ \begin{array}{rcll}
L u &=&1&\textrm{in }\Omega \\
u&=&0&\textrm{in }\R^n\backslash\Omega,
\end{array}\right.
\end{equation*}
with~$u\not\in C^{3s+\epsilon}_{\rm loc}(\Omega)$, for any~$\epsilon>0$.
\end{thm}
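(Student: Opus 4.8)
The plan is to build the counterexample by exploiting the degeneracy of the operator $L=(-\partial_1^2)^s+(-\partial_2^2)^s$ along the two coordinate directions near a boundary point where $\partial\Omega$ is tangent to one of these directions. The key point is that near such a point, one of the two one-dimensional operators ``sees'' the solution vanish only with the slow rate governed by the distance to the boundary, and this mismatch produces a term of the form $(\text{dist})^{3s}$ with a non-smooth coefficient. Concretely, I would first analyze the model situation where $\Omega$ locally looks like a half-space $\{x_2>0\}$: there the solution of $Lu=1$, $u=0$ in $\{x_2\le 0\}$ behaves like $c\,(x_2)_+^s$ near the boundary (this is the exact one-dimensional profile for $(-\partial_2^2)^s$, and the $(-\partial_1^2)^s$ term contributes nothing since $u$ is independent of $x_1$ in the model). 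This gives the baseline boundary behavior $u \sim d^s$ where $d = \mathrm{dist}(x,\partial\Omega)$.

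The next step is to perturb the flat boundary to a smooth strictly convex-from-outside cap, say $\partial\Omega$ locally given by $x_2 = \gamma |x_1|^2 + O(|x_1|^4)$ near the origin with $\gamma>0$ chosen so that the domain is \emph{non-convex} there (i.e. the complement is convex near that point, equivalently $\Omega$ has a concave portion of boundary). Along the vertical line $x_1=0$, the distance to $\partial\Omega$ is $d\approx x_2$, and the solution still behaves like $d^s$ to leading order. The crucial computation is to expand $Lu$ to next order: applying $(-\partial_2^2)^s$ to (a careful ansatz built from) $d^s$ along the normal line reproduces the constant $1$, but applying $(-\partial_1^2)^s$ in the \emph{tangential} direction to the function $x_1 \mapsto u(x_1, x_2)$ — which for fixed small $x_2$ vanishes like $\bigl(x_2 - \gamma x_1^2\bigr)_+^s$, i.e. like $(\text{quadratic in }x_1)_+^s$ — produces, after the one-dimensional fractional Laplacian computation, a contribution scaling like $x_2^{s-s}\cdot$(something), more precisely a term comparable to $x_2^{2s}$ times $x_2^{s}$-type corrections near $x_1=0$ that, matched back into the equation, forces a correction to $u$ of order $d^{3s}$ whose coefficient is \emph{not} smooth (it carries the non-smoothness of $|x_1|$ / of the distance function across the concave boundary). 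This is the mechanism: one sets up an expansion $u = c\,d^s + (\text{correction of order } d^{1+s}) + v$, shows $Lv$ is bounded and that $v$ must absorb a $d^{3s}$-singularity, and reads off $u\notin C^{3s+\epsilon}$ along that normal ray.

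To make this rigorous I would construct $u$ not by ansatz alone but as the genuine weak solution in a concrete domain $\Omega$ (a smooth bounded domain containing a flat-then-slightly-concave arc through the origin, e.g. obtained by smoothly gluing a large disk to a small outward bump), invoke the Remark after Theorem \ref{MAIN} to work with the pointwise operator, and then derive a lower bound on the oscillation of the $3s$-th order difference quotients of $u$ at points $(0,x_2)$ as $x_2\to 0^+$. The barrier/separation-of-variables structure of $L$ for the operator \eqref{RI} is what makes the explicit asymptotics tractable: along $x_1=0$ the $x_1$-operator decouples and acts on the known boundary profile, while the $x_2$-operator acts on an essentially one-dimensional function. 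The main obstacle — and the place where real work is needed — is precisely the \emph{matching argument}: showing that the $d^{3s}$ term genuinely appears with a nonzero, non-smooth coefficient in the true solution and is not cancelled by lower-order adjustments. This requires a quantitative version of the boundary regularity $u/d^s \in C^{\alpha}$ up to the boundary (available for bounded spectral measures, which is our case \eqref{RI}) together with a careful bootstrap showing that the next term in the expansion is forced by the geometry and cannot be smoother; one then checks that a concave boundary (as opposed to convex) flips the sign / loses the cancellation that in the convex case of Theorem \ref{MAIN} is exactly what buys the extra derivative.
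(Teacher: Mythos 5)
There is a genuine gap in your proposal, and it concerns which points you are measuring regularity at. The statement to be disproved is $u\in C^{3s+\epsilon}_{\rm loc}(\Omega)$, which is a statement about \emph{interior} regularity: it permits the $C^{3s+\epsilon}$ norms to blow up as one approaches $\partial\Omega$. Your plan is to expand $u$ near a boundary tangency point, show a $d^{3s}$ term with a non-smooth coefficient, and ``read off $u\notin C^{3s+\epsilon}$ along that normal ray'' as $x_2\to0^+$. Even if carried out, this would only exhibit a $d^{3s}$-type singularity \emph{at the boundary}, which is perfectly consistent with $u\in C^{3s+\epsilon}_{\rm loc}(\Omega)$ (and indeed $u\sim d^s$ is already far worse at the boundary). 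To contradict local interior regularity one must produce a fixed compact $K\Subset\Omega$ where the $C^{3s+\epsilon}(K)$ norm is infinite, and a local boundary expansion does not do this.

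The mechanism the paper actually exploits is nonlocal propagation, not a boundary blow-up: the domain in Figure~A has a \emph{flat} boundary segment lying on the line $x_2=0$ over $\rho\in[-8,-6]$, with $\Omega$ underneath, while the points of interest $x_1=(0,0)$ and $x_2=(0,-\eta)$ sit in $B_1\Subset\Omega$, far from that segment. One writes $u=v+w$ with a cutoff supported in $B_2$; if $u\in C^{3s+\epsilon}_{\rm loc}$ then Lemma~\ref{0sdfuvetryu45t} forces $Lw=1-Lv\in C^{s+\epsilon}$. But $Lw(x_1)-Lw(x_2)$ is a sum of two one-dimensional line integrals over $|\rho|\ge1/2$: the ``transverse'' one $J_2$ is controlled by $\eta^{s+\epsilon}$ via the interior estimate of~\cite{SROP}, while the ``tangential'' one $J_1$ sees the ray $\{(\rho,0)\}$ lying \emph{on} $\partial\Omega$ for $\rho\in[-8,-6]$, where $w(\rho,0)=0$ but $w(\rho,-\eta)\ge c\,\eta^s$ by the barrier of Lemma~\ref{phi}; hence $J_1\gtrsim\eta^s$ and $|Lw(x_1)-Lw(x_2)|\gtrsim\eta^s$, contradicting $Lw\in C^{s+\epsilon}$ as $\eta\to0^+$. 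Your proposal contains no analogue of this propagation step. It also never closes the ``matching argument'' it flags as the main obstacle, and the sign conventions in your parabolic model are off (with $\Omega=\{x_2<\gamma x_1^2\}$ locally the vertical ray $(0,x_2)$, $x_2>0$, is \emph{outside} $\Omega$; with $\Omega=\{x_2>\gamma x_1^2\}$ the domain is locally convex, which is the case covered by Theorem~\ref{MAIN}, not a counterexample). You should instead keep a flat boundary arc on a coordinate line, compare $Lw$ at two interior points shifted in the normal direction, and let the line integral along the grazing ray do the work.
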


We remark that the result in Theorem~\ref{MAIN2}
is special for the case of singular measures $a$
(compare, for instance, with the regularity results
in case of smooth measures in~\cite{silvestre,BFV,SROP}).
In particular, the loss of interior regularity detected by
Theorem~\ref{MAIN2} is in sharp
contrast with the smooth interior regularity theory that holds
true for both the classical and the fractional Laplacian.\medskip

Roughly speaking, the counterexample in Theorem~\ref{MAIN2}
will be based on the
fact that, if the domain is not convex, there are half-lines
originating from an interior point that intersect tangentially the boundary
of~$\Omega$: then, the singularity on $\partial\Omega$ (created by the solution itself)
``propagates'' inside the domain due to the nonlocal effect of the operator.
\medskip

The rest of the paper is organized as follows.
In Section~\ref{WN:SEC}
we recall the appropriate notion of weighted norms
that we will use to prove Theorem~\ref{MAIN}:
these norms are slightly unpleasant from the typographic point
of view, but they have nice scaling properties
and they encode the ``right'' behavior of the functions in
the vicinity of the boundary as well.

Then, Sections~\ref{S:2a} and~\ref{S:2b}
comprise several integral computations
of geometric flavor to estimate suitably averaged weighted
distance functions in the domain into consideration.
More precisely, Section~\ref{S:2a} is devoted to the
case of convex domains. The estimates obtained there
will be used for the proof of Theorem~\ref{MAIN} in case of convex domains,
where no structural assumption on the operator~$L$ is taken
and therefore the integrals considered are ``line
integrals'', as in~\eqref{L}.
Section~\ref{S:2b} is instead devoted to the case of
bounded domains with~$C^{1,1}$ boundary. The estimates
of this part will be used
in the proof of Theorem~\ref{MAIN} for~$C^{1,1}$ domains: since in
this case the operator~$L$ is
as in~\eqref{L2}, the integrals considered
are ``spread'' over~$\R^n$.
That is, roughly speaking, in Section~\ref{S:2a} the
singularity of the line integrals is compensated by the convexity
of the domain, while in Section~\ref{S:2b}
is the operator~$L$ that somehow
averages its effect in open regions of~$\R^n$.

In Section~\ref{US} we compute the effect of a cutoff
on the operator. Namely, when proving regularity results,
it is often useful to distinguish the interior regularity
from the one at the boundary (though, as shown here, in the nonlocal
setting one may dramatically interact with the other).
To this goal, it is sometimes desirable to localize the
solution inside the domain by multiplication with a cutoff function:
by performing this operation,
some estimates are needed in order to control the
effect of this cutoff on the operator.
These estimates, in our case, are provided in
Lemma~\ref{WW}.

In Section~\ref{IT:SEC}
we bootstrap the regularity theory obtained
in order to increase, roughly speaking, the H\"older exponent by~$2s$.
In our framework,
the model for such ``improvement of regularity'' result is given
by Theorem~\ref{CS}, which somehow allows us to say that
solutions in~$C^\alpha_{\rm loc}$ are in fact
in~$C^{\alpha+2s}_{\rm loc}$, if the nonlinearity is nice enough and~$\alpha<1+s$
(the precise statement involves weighted norms).
Section~\ref{IT:SEC}
contains also Corollary~\ref{IT},
which is the iterative version of Theorem~\ref{CS}
and provides
a very general regularity result, which in turn implies
Theorem~\ref{MAIN} (as a matter
of fact, in Corollary~\ref{IT}
it is not necessary to assume that~$g$ is~$C^\beta$
up to the boundary, but only that has finite weighted norm,
and also the weighted norm of~$u$ is controlled
up to the boundary).

The proof of Theorem~\ref{MAIN2} is contained in Section~\ref{CO:CO:S},
where a somehow surprising counterexample will be constructed.

The paper ends with an appendix, which collects some ``elementary'',
probably well known, but not trivial, ancillary results on the
distance functions (in our setting, these results are
needed for the integral computations of Section~\ref{S:2b}).

\section{Regularity framework with weighted norms}\label{WN:SEC}

To study the regularity theory up to the boundary, it is
convenient to
use the following notation for weighted norms.
We consider the distance from a point~$x\in\Omega$ to~$\partial\Omega$,
defined, as usual as
$$ \dd(x)={\rm dist}\,(x,\partial\Omega)=\inf_{q\in\partial\Omega} |x-q|.$$
We also denote
\begin{equation} \label{dxy}\dd(x,y)=\min\{ \dd(x),\,\dd(y)\}.\end{equation}
Of course, no confusion should arise
between~$\dd(x,y)$ and the distance function from $x$ to~$y$
(since the latter is denoted by~${\rm dist}\,(x,y)$).
Given~$\sigma\in\R$
and~$\alpha>0$, we take~$k\in\N$ and~$\alpha'\in(0,1]$
such that~$\alpha=k+\alpha'$ and we let
\begin{equation}\label{GN}\begin{split}
& [u]_{\alpha;\Omega}^{(\sigma)}
= \sup_{x\ne y\in\Omega} \left[
{ \dd^{\alpha+\sigma}(x,y)
\frac{|D^ku(x)-D^ku(y)|}{|x-y|^{\alpha'}} } \right]\\
{\mbox{and }}\;&\|u\|_{\alpha;\Omega}^{(\sigma)}=
\sum_{j=0}^k \sup_{x\in\Omega} \left[ \dd^{j+\sigma}(x)\,|D^j u(x)|\right]
+[u]_{\alpha;\Omega}^{(\sigma)}
.\end{split}\end{equation}
Notice that in~\eqref{GN} the number~$k$ is a ``sort of
integer part'' of~$\alpha$: more precisely,
when~$\alpha'\in(0,1)$ then~$k$ is 
integer part of~$\alpha$, but when~$\alpha'=1$ then~$\alpha$
is an integer and~$k=\alpha-1$. Related
notations concerning this type of modified integer part function
appear in the theory of Besov spaces.

The advantage of these weighted norms is twofold.
First of all,
since we write~$\alpha=k+\alpha'$ with~$\alpha'\in(0,1]$,
we can comprise the usual H\"older and Lipschitz
spaces~$C^\beta$, $C^{1+\beta}$, $C^{2+\beta}$, etc., with~$\beta\in(0,1]$
with the same notation.
As usual, given~$k\in\N$ and~$\beta\in(0,1]$, we define
\begin{equation}\label{RE:VB67}
\| u \|_{C^{k+\beta}(\Omega)}
:=\sum_{{\gamma\in\N^n}\atop{|\gamma|\le k}}
\sup_{x\in\Omega} |D^\gamma u(x)|
+\sum_{{\gamma\in\N^n}\atop{|\gamma|=k}}
\sup_{ {x,y\in\Omega}\atop{x\ne y} }
\frac{|D^\gamma u(x)-D^\gamma u(y)|}{|x-y|^\beta}.
\end{equation}
Notice that when~$-\sigma=\alpha\in(0,1]$,
the notation~$[u]_{\alpha;\Omega}^{(\sigma)}$ boils down to
the usual seminorm of~$C^\alpha(\Omega)$.
What is more, by choosing~$\sigma$
in the appropriate way, we can allow the derivatives of~$u$
to possibly blow up near the boundary, hence interior and boundary
regularity can be proved at the same time and interplay\footnote{Though not explicitly used in this paper, we remark that an additional
advantage of these weighted norms is that they usually behave
nicely for semilinear equations, namely when the nonlinearity in~\eqref{eq}
has the form~$g(x)=f(x,u(x))$,
in which~$f$ is locally
Lipschitz, but~$u$ is not.}
the one with the other.

The weighted norms
in~\eqref{GN} enjoy a monotonicity property with respect to~$\alpha$,
that is if~$\alpha_1\le\alpha_2$
and~$\|u\|_{\alpha_2;\Omega}^{(\sigma)}<+\infty$ then
also~$\|u\|_{\alpha_1;\Omega}^{(\sigma)}<+\infty$.
This is given by the following:

\begin{lem}\label{NORMS}
Let~$\alpha_1\le\alpha_2$. Then~$\|u\|_{\alpha_1;\Omega}^{(\sigma)}\le
C\,\|u\|_{\alpha_2;\Omega}^{(\sigma)}$, for some~$C>0$
only depending on~$\alpha_1$ and~$\alpha_2$
(and bounded uniformly when~$\alpha_1$ and~$\alpha_2$
range in a bounded set).
\end{lem}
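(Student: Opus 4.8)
The plan is to prove the monotonicity by comparing the weighted norm $\|u\|_{\alpha_1;\Omega}^{(\sigma)}$, built with integer part $k_1$ and fractional part $\alpha_1'$, to $\|u\|_{\alpha_2;\Omega}^{(\sigma)}$, built with $k_2$ and $\alpha_2'$. Since $\alpha_1\le\alpha_2$ we have $k_1\le k_2$, so the sum of sup-terms $\sum_{j=0}^{k_1}\sup_x[\dd^{j+\sigma}(x)|D^ju(x)|]$ appearing in $\|u\|_{\alpha_1;\Omega}^{(\sigma)}$ is clearly a sub-sum of the one in $\|u\|_{\alpha_2;\Omega}^{(\sigma)}$, so that part is immediate. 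The real content is to bound the seminorm $[u]_{\alpha_1;\Omega}^{(\sigma)}$, which involves $|D^{k_1}u(x)-D^{k_1}u(y)|/|x-y|^{\alpha_1'}$, by the norm at level $\alpha_2$.

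First I would split into two cases according to whether $k_1=k_2$ or $k_1<k_2$. If $k_1=k_2$ (so $\alpha_1'\le\alpha_2'$), then for each pair $x\ne y$ I estimate the difference quotient by interpolation: when $|x-y|$ is small compared to $\dd(x,y)$ one writes $|x-y|^{\alpha_1'}=|x-y|^{\alpha_2'}\cdot|x-y|^{\alpha_1'-\alpha_2'}$ and uses that $|x-y|$ is comparable to or smaller than $\dd(x,y)$ after a standard reduction (it suffices to treat $|x-y|\le \dd(x,y)/2$, say, since the complementary regime is controlled by the sup-terms $\sup_x[\dd^{k_1+\sigma}(x)|D^{k_1}u(x)|]$ directly); when $|x-y|\ge\dd(x,y)/2$ one bounds $|D^{k_1}u(x)-D^{k_1}u(y)|\le |D^{k_1}u(x)|+|D^{k_1}u(y)|$ and again uses the sup-terms together with $\dd(x)\ge\dd(x,y)$. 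If $k_1<k_2$, then $D^{k_1}u$ is differentiable and one writes $D^{k_1}u(x)-D^{k_1}u(y)=\int_0^1 D^{k_1+1}u(y+t(x-y))\cdot(x-y)\,dt$ along the segment, reducing (after the same splitting into $|x-y|$ small or large relative to $\dd(x,y)$) to the sup-bound on $D^{k_1+1}u$ weighted by $\dd^{k_1+1+\sigma}$, which is one of the terms in $\|u\|_{\alpha_2;\Omega}^{(\sigma)}$; one then iterates/combines, or directly observes that a finite chain of such steps passes from level $k_1+\alpha_1'$ to level $k_2+\alpha_2'$.

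The main obstacle I anticipate is keeping the distance weights $\dd(x,y)=\min\{\dd(x),\dd(y)\}$ under control when $x$ and $y$ are far apart, since there the two segments $x\mapsto y$ may leave the region where $\dd$ is comparable at the endpoints; this is exactly why the reduction to $|x-y|\le \dd(x,y)/2$ (or a similar fixed fraction) is essential — on that good regime the whole segment $\{y+t(x-y)\}$ stays in the ball $B_{\dd(x,y)/2}(y)$ where $\dd\ge\dd(x,y)/2$, so the weights along the integral are uniformly comparable to $\dd(x,y)$, and the complementary regime is handled crudely by the zeroth- and lower-order sup-terms. The constant $C$ produced this way depends only on $\alpha_1,\alpha_2$ (through $k_1,k_2,\alpha_1',\alpha_2'$ and the fixed fraction chosen), and stays bounded as $\alpha_1,\alpha_2$ range in a bounded set, as claimed. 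Assembling these estimates gives $\|u\|_{\alpha_1;\Omega}^{(\sigma)}\le C\,\|u\|_{\alpha_2;\Omega}^{(\sigma)}$.
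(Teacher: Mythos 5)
Your proof follows the same overall structure as the paper's: reduce the seminorm estimate to the regime $|x-y|\lesssim\dd(x,y)$ (with the complementary regime handled crudely via the sup-terms), split into $k_1=k_2$ vs.\ $k_1<k_2$, interpolate in the first case, and use the Fundamental Theorem of Calculus along the segment in the second. The only real difference is cosmetic: for $k_1=k_2$ you interpolate the factor $|x-y|^{\alpha_1'}=|x-y|^{\alpha_2'}\cdot|x-y|^{\alpha_1'-\alpha_2'}$ and absorb the excess with $|x-y|\le\dd(x,y)/4$, whereas the paper splits $|D^{k_1}u(x)-D^{k_1}u(y)|$ into a convex combination of powers and matches exponents; for $k_1<k_2$ your direct absorption of $|x-y|^{1-\alpha_1'}$ replaces the paper's second interpolation. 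Both give the identical cancellation of distance powers, and your version is arguably the cleaner bookkeeping.

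One remark to tighten: your phrase ``one then iterates/combines, or directly observes that a finite chain of such steps passes from level $k_1+\alpha_1'$ to level $k_2+\alpha_2'$'' suggests a multi-step argument. No chain is needed. Since $k_1<k_2$ implies $k_1+1\le k_2$, the sup-bound $\sup_x\dd^{k_1+1+\sigma}(x)\,|D^{k_1+1}u(x)|$ is already one of the terms of $\|u\|_{\alpha_2;\Omega}^{(\sigma)}$, so a single FTC step combined with $|x-y|\le\dd(x,y)/4$ closes the estimate immediately:
\begin{equation*}
\dd^{\alpha_1+\sigma}(x,y)\,\frac{|D^{k_1}u(x)-D^{k_1}u(y)|}{|x-y|^{\alpha_1'}}
\le C\left(\frac{|x-y|}{\dd(x,y)}\right)^{1-\alpha_1'}\|u\|_{\alpha_2;\Omega}^{(\sigma)}
\le C\,\|u\|_{\alpha_2;\Omega}^{(\sigma)},
\end{equation*}
where the middle step uses that the segment from $x$ to $y$ lies in the ball $B_{\dd(x,y)/4}(y)$, on which $\dd$ is comparable to $\dd(x,y)$. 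With that streamlined, the argument is complete and matches the paper's.
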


\begin{proof} We write~$\alpha_i=k_i+\alpha'_i$, for~$i\in\{1,2\}$,
$k_i\in\N$ and~$\alpha'_i\in(0,1]$. We claim that
\begin{equation}\label{k}
k_1\le k_2.
\end{equation}
To prove it, suppose the converse: then~$k_1>k_2$
and therefore, being~$k_1$ and~$k_2$ integers, it follows that~$k_1\ge
k_2+1$. Then
$$ \alpha_2=k_2+\alpha'_2\le k_1+\alpha'_2-1
\le k_1<k_1+\alpha'_1=\alpha_1,$$
in contradiction with our assumptions. This proves~\eqref{k}.

Now we show that
\begin{equation}\label{ma89}
\dd^{\alpha_1+\sigma}(x,y)
\frac{|D^{k_1} u(x)-D^{k_1} u(y)|}{|x-y|^{\alpha'_1}} \le
C\,\|u\|_{\alpha_2;\Omega}^{(\sigma)}.
\end{equation}
Notice that, to prove~\eqref{ma89}, we can suppose that
\begin{equation}\label{ma89.1}
|x-y|\le \frac{\dd(x,y)}{4}.
\end{equation}
Indeed, if~$|x-y|>\dd(x,y)/4$ we use~\eqref{k} to see that
$$ \dd^{k_1+\sigma} (x) |D^{k_1} u(x)|\le\|u\|_{\alpha_2;\Omega}^{(\sigma)}$$
and therefore
\begin{eqnarray*}
&&\dd^{\alpha_1+\sigma}(x,y)
\frac{|D^{k_1} u(x)-D^{k_1} u(y)|}{|x-y|^{\alpha'_1} }\le
C\dd^{\alpha_1+\sigma}(x,y)
\frac{|D^{k_1} u(x)|+|D^{k_1} u(y)|}{\dd^{\alpha'_1}(x,y) }\\
&&\qquad\le
C\dd^{k_1+\sigma}(x,y)\,\big(
\dd^{-k_1-\sigma}(x)+\dd^{-k_1-\sigma}(y)
\big)\,\|u\|_{\alpha_2;\Omega}^{(\sigma)}
\le C\,\|u\|_{\alpha_2;\Omega}^{(\sigma)},\end{eqnarray*}
that shows~\eqref{ma89}
in this case. Hence, we reduce to prove~\eqref{ma89}
under the additional assumption~\eqref{ma89.1}.
To this goal, we observe that~\eqref{ma89.1} implies that
\begin{equation}\label{9sdfgf1qsssa12}
|\dd(x)-\dd(y)|\le |x-y|\le \frac{\dd(x,y)}{4}.\end{equation}
Now,
in view of~\eqref{k}, we can distinguish two cases:
either~$k_1=k_2$ or~$k_1<k_2$.
If~$k_1=k_2$, then
$$ \alpha_1' =\alpha_1-k_1=\alpha_1-k_2\le \alpha_2-k_2=\alpha'_2,$$
thus we set~$\dd_x=\dd(x)$ and~$\dd_{x,y}=\dd(x,y)$
for typographical convenience and we perform the following
computation:
\begin{eqnarray*}
&&
\dd^{\alpha_1+\sigma}_{x,y}
\frac{|D^{k_1} u(x)-D^{k_1} u(y)|}{|x-y|^{\alpha'_1} }\\
&=&\dd_{x,y}^{\alpha_1+\sigma}
\frac{|D^{k_1} u(x)-D^{k_1} u(y)|^{\frac{\alpha'_2-\alpha'_1}{\alpha'_2}}
|D^{k_1} u(x)-D^{k_1} u(y)|^{\frac{\alpha'_1}{\alpha'_2}}
}{|x-y|^{\alpha'_1} }\\
&\le& \dd_{x,y}^{\alpha_1+\sigma}
\,\dd_{x,y}^{\frac{-\alpha'_1(\alpha_2+\sigma)}{\alpha'_2}}
\frac{\big( |D^{k_1} u(x)|+|D^{k_1} u(y)|
\big)^{\frac{\alpha'_2-\alpha'_1}{\alpha'_2}}\,\big(
\dd_{x,y}^{\alpha_2+\sigma}
|D^{k_1} u(x)-D^{k_1} u(y)|\big)^{\frac{\alpha'_1}{\alpha'_2}}
}{|x-y|^{\alpha'_1} }\\
&\le&\dd_{x,y}^{\alpha_1+\sigma}
\dd_{x,y}^{\frac{-\alpha'_1(\alpha_2+\sigma)}{\alpha'_2}}
\frac{\big( \dd_x^{-k_1-\sigma}
\|u\|_{\alpha_2;\Omega}^{(\sigma)}
+\dd_y^{-k_1-\sigma} \|u\|_{\alpha_2;\Omega}^{(\sigma)}
\big)^{\frac{\alpha'_2-\alpha'_1}{\alpha'_2}}\,\big(
\|u\|_{\alpha_2;\Omega} |x-y|^{\alpha'_2}
\big)^{\frac{\alpha'_1}{\alpha'_2}}
}{|x-y|^{\alpha'_1} }\\
&\le& C\,\dd^{\alpha_1+\sigma}_{x,y}
\dd_{x,y}^{-\frac{\alpha'_1(\alpha_2+\sigma)}{\alpha'_2}}
\dd^{-\frac{(\alpha'_2-\alpha'_1)(k_1+\sigma)}{\alpha'_2}}_{x,y}\,
\|u\|_{\alpha_2;\Omega}^{(\sigma)}.
\end{eqnarray*}
Moreover, since~$k_1=k_2$,
$$ \alpha_1+\sigma-\frac{\alpha'_1(\alpha_2+\sigma)}{\alpha'_2}
-\frac{(\alpha'_2-\alpha'_1)(k_1+\sigma)}{\alpha'_2}=0,$$
hence the last inequality
proves~\eqref{ma89} when~$k_1=k_2$. Let us
now consider the case~$k_1<k_2$, i.e.~$k_1+1\le k_2$.
Again, we can suppose that~$\dd(x)\le \dd(y)$
and denote by~$B_o$ the ball centered at~$x$
and radius~$\dd(x)/4$, and then~\eqref{ma89.1}
implies that~$y\in B_o$. Also, we
notice that this ball lies in~$\Omega$, at distance
from~$\partial\Omega$
bounded from below by~$3\dd(x)/4$.
As a consequence,
\begin{eqnarray*}
&& |D^{k_1} u(x)-D^{k_1} u(y)|\le
\sup_{ \zeta\in B_o } |D^{k_1+1} u(\zeta)|\,|x-y| \\
&&\qquad\le C \dd_x^{-k_1-1-\sigma}
\sup_{ \zeta\in B_o } \dd^{k_1+1+\sigma}_\zeta |D^{k_1+1} u(\zeta)|\,|x-y|
\\&&\qquad\le C \dd^{-k_1-1-\sigma}_{x,y}\,\|u\|_{\alpha_2;\Omega}^{(\sigma)}
|x-y|.
\end{eqnarray*}
Hence we obtain
\begin{eqnarray*}
&&
\dd^{\alpha_1+\sigma}_{x,y}
\frac{|D^{k_1} u(x)-D^{k_1} u(y)|}{|x-y|^{\alpha'_1} }
\\ &=&
\dd^{\alpha_1+\sigma}_{x,y}
\frac{ |D^{k_1} u(x)-D^{k_1} u(y)|^{1-\alpha'_1}
|D^{k_1} u(x)-D^{k_1} u(y)|^{\alpha'_1}}{|x-y|^{\alpha'_1} }\\
&\le&
\dd^{\alpha_1+\sigma}_{x,y}
\big( \dd^{-k_1-\sigma}_x \|u\|_{\alpha_2;\Omega}^{(\sigma)}+
\dd^{-k_1-\sigma}_y \|u\|_{\alpha_2;\Omega}^{(\sigma)}
\big)^{1-\alpha'_1} \,
\dd^{-\alpha_1'(k_1+1+\sigma)}_{x,y}\, \big(\|u\|_{\alpha_2;\Omega}^{(\sigma)}\big)^{\alpha'_1}
\\ &\le& \dd^{\alpha_1+\sigma}_{x,y} \dd^{-(k_1+\sigma)(1-\alpha'_1)}_{x,y}
\dd^{-\alpha_1'(k_1+1+\sigma)}_{x,y}\,\|u\|_{\alpha_2;\Omega}^{(\sigma)}.
\end{eqnarray*}
Since
$$ \alpha_1+\sigma-(k_1+\sigma)(1-\alpha'_1)-\alpha_1'(k_1+1+\sigma)=0,$$
the inequality above
proves~\eqref{ma89} when~$k_1<k_2$.

Having completed the proof of~\eqref{ma89},
we now notice that
$$ \sum_{j=0}^{k_1} \sup_{x\in\Omega} \left[ \dd^{j+\sigma}(x)\,|D^j u(x)|\right]
\le
\sum_{j=0}^{k_2} \sup_{x\in\Omega} \left[ \dd^{j+\sigma}(x)\,|D^j u(x)|\right],$$
thanks to~\eqref{k}. This inequality and~\eqref{ma89}
imply the desired claim.
\end{proof}

\section{Integral computations for convex sets}\label{S:2a}

The goal of this section is to provide some (somehow optimal)
weighted integral computations of geometric type for convex sets.
Similar estimates for
bounded domains with~$C^{1,1}$ boundary
when the measure~$a$ in \eqref{L} is regular
will be developed in the forthcoming
Section~\ref{S:2b}.
The first geometric integral computation is given by the following:

\begin{lem}\label{AT1}
Let~$p\in \R^n$, $R>2r>0$ and~$\omega\in S^{n-1}$.
Let~$\Omega\subset\R^n$ be a convex open set, with~$B_R(p)\subseteq\Omega$.
Then there exists~$C>0$, possibly\footnote{As a matter of fact, an explicit expression
for such~$C$ is given by~$C=2^{2s+1}$ (which, in particular,
is less than~$8$). This bound can be obtained
at the end of the proof, by using that, if~$f(t):=(1-t)^{-2s}$,
then
$$ \sup_{t\in[0,1/2]} |f'(t)|=
2s \sup_{t\in[0,1/2]} (1-t)^{-2s-1} = (2s)\cdot (1/2)^{-2s-1}.$$ }
depending on~$n$ and~$s$,
such that
\begin{equation}\label{09}
\int_{R}^{+\infty}\,d\rho \;\frac{\chi_\Omega(p+\rho\omega)\,
\chi_{[0,r]} \big( \dr(p+\rho\omega)\big)}{\rho^{1+2s}} \le Cr R^{-1-2s}.\end{equation}
\end{lem}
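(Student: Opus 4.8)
The plan is to reduce \eqref{09} to a one-variable estimate along the ray $\rho\mapsto p+\rho\omega$, exploiting that the distance to the boundary of a convex set is concave along segments. Put
$$\rho_+:=\sup\{\rho\ge 0:\ p+\rho\omega\in\Omega\}\in(0,+\infty],$$
so that, by convexity of $\Omega$, $\{\rho\ge 0:\ p+\rho\omega\in\Omega\}=[0,\rho_+)$; moreover $\rho_+\ge R$ because $B_R(p)\subseteq\Omega$.

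\emph{Step 1 (geometric bound along the ray).} I will show that for every $\rho\in[0,\rho_+)$
$$\dr(p+\rho\omega)\ \ge\ \Bigl(1-\tfrac{\rho}{\rho_+}\Bigr)R,$$
with the convention $1/\rho_+=0$ when $\rho_+=+\infty$. If $\rho_+<+\infty$, the function $t\mapsto\dr(p+t\omega)$ is concave on $[0,\rho_+)$, extends continuously with value $0$ at $t=\rho_+$ (as $p+\rho_+\omega\in\partial\Omega$) and has value $\dr(p)\ge R$ at $t=0$; being concave, its graph lies above the chord through $(0,R)$ and $(\rho_+,0)$, which is the claimed inequality. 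Equivalently, and without invoking concavity: for $\rho<\rho'<\rho_+$ one writes $p+\rho\omega=\tfrac{\rho}{\rho'}(p+\rho'\omega)+\bigl(1-\tfrac{\rho}{\rho'}\bigr)p$, and convexity together with $B_R(p)\subseteq\Omega$ gives $B_{(1-\rho/\rho')R}(p+\rho\omega)\subseteq\Omega$, whence the bound on letting $\rho'\uparrow\rho_+$. If $\rho_+=+\infty$ the bound reads $\dr(p+\rho\omega)\ge R>r$, so the integrand in \eqref{09} vanishes identically and there is nothing to prove; likewise if $\rho_+=R$ the domain of integration is empty. Hence from now on we may assume $\rho_+>R$.

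\emph{Step 2 (support and one-variable integral).} By Step 1, $\dr(p+\rho\omega)\le r$ forces $\rho\ge\rho_+(1-r/R)$; combined with $\rho\ge R$, this confines the integrand in \eqref{09} to the interval $[a,\rho_+)$ with $a:=\max\{R,\ \rho_+(1-r/R)\}$, so the left-hand side of \eqref{09} is at most $\int_a^{\rho_+}\rho^{-1-2s}\,d\rho$. Substituting $\rho=\rho_+(1-t)$,
$$\int_a^{\rho_+}\frac{d\rho}{\rho^{1+2s}}=\rho_+^{-2s}\int_0^{\tau_0}(1-t)^{-1-2s}\,dt,\qquad \tau_0:=1-\tfrac{a}{\rho_+}=\min\Bigl\{1-\tfrac{R}{\rho_+},\ \tfrac{r}{R}\Bigr\}.$$
Since $R>2r$ we have $\tau_0\le r/R<\tfrac12$, so $\int_0^{\tau_0}(1-t)^{-1-2s}\,dt\le\tau_0(1-\tau_0)^{-1-2s}\le 2^{1+2s}\tau_0\le 2^{1+2s}\tfrac{r}{R}$, and $\rho_+\ge R$ gives $\rho_+^{-2s}\le R^{-2s}$. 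Combining, the left-hand side of \eqref{09} is at most $2^{1+2s}\,rR^{-1-2s}$, proving the lemma with $C=2^{2s+1}$.

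\emph{Main obstacle.} The only real content is Step 1: extracting from convexity of $\Omega$ together with $B_R(p)\subseteq\Omega$ the linear lower bound $\dr(p+\rho\omega)\ge(1-\rho/\rho_+)R$ along the whole ray. Everything afterward is a routine one-variable computation, and the hypothesis $R>2r$ is used only to keep the substitution variable $\tau_0$ below $\tfrac12$, where $(1-t)^{-1-2s}$ remains harmlessly bounded.
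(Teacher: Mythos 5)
Your proof is correct and follows essentially the same route as the paper: the heart of the matter in both is the linear lower bound $\dr(p+\rho\omega)\ge(1-\rho/\rho_\star)R$ along the ray, from which the one-variable integral estimate follows by the same substitution. Your Step~1 derivation of that bound via the convex-combination scaling $B_{(1-\rho/\rho')R}(p+\rho\omega)\subseteq\Omega$ (or equivalently concavity of the distance function on a convex set) is a little cleaner than the paper's cone-and-trigonometry argument, and it also avoids the paper's separate limiting argument showing the ray meets $\partial\Omega$ at a single point, but the underlying idea is the same.
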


\begin{proof} The idea is that the set in which the integrand is
non-zero ``typically'' occupies a segment of length comparable to~$r$.

More precisely, consider the half-line $\Theta=\{
p+\rho\omega,\; \rho\ge0\}$.
If~$\Theta$ lies inside $\Omega$, then we are done.
Indeed, in this case, by convexity, the set~$\Omega$
contains the convex envelope of
the ball~$B_R(p)$ and the half-line~$\Theta$, which is an horizontal cylinder of radius~$R$.
In particular, in this case we have that~$\dd(p+\rho\omega)\ge R>r$, hence~$\chi_{[0,r]}
\big( \dr(p+\rho\omega)\big)=0$,
and so the left hand side of~\eqref{09} vanishes.

As a consequence, we may and do assume that~$\Theta\cap (\partial\Omega)$ is non-void.
Thus, we claim that~$\Theta\cap (\partial\Omega)$ only contains a point.
Indeed, suppose by contradiction
that~$p+\rho_1\omega$ and~$p+\rho_2\omega$ belong to~$\partial\Omega$,
for some~$\rho_2>\rho_1\ge0$. In particular, there exists
a sequence~$q_j\in\Omega$ such that~$q_j\to p+\rho_2\omega$
as~$j\to+\infty$.

Then, by convexity, $\Omega$ contains the convex envelope~$K_j$
of~$B_R(p)$ with~$q_j$. Notice that~$K_j$, as $j\to+\infty$,
approaches the convex envelope
of~$B_R(p)$ with~$p+\rho_2\omega$: therefore, for large~$j$,
the point~$p+\rho_1\omega$ belongs to the interior of~$K_j$
and thus to~$\Omega$. This is a contradiction, and so we have shown that
$\Theta\cap(\partial\Omega)$ consists of exactly one point,
that we denote by~$q_\star=p+\rho_\star\omega$, for some~$\rho_\star\ge0$.

We remark that, since~$B_R(p)$ lies in~$\Omega$, we have that
\begin{equation}\label{099}
\rho_\star\ge R.
\end{equation}
Now we show that
\begin{equation}\label{TRIG}
{\mbox{if $\rho\ge0$, $p+\rho\omega\in\Omega$ and $\dr(p+\rho\omega) \in[0,r]$,
then~$\rho\in \left[ \rho_\star(1-rR^{-1}),\, \rho_\star\right]$.}}
\end{equation}
Indeed, by convexity, $\Omega$ contains the interior
of the convex envelope~$K_\star$ of~$B_R(p)$ with~$q_\star$.
Therefore, $\dd(p+\rho\omega)$ is controlled from below by the distance
of~$p+\rho\omega$ to~$\partial K_\star$, which will be denoted
by~$\delta$.

We remark that~$K_\star$ has a radially symmetric
conical singularity at~$q_\star$.
If we let~$\beta$ be the planar angle of such cone, by trigonometry we
have that
\begin{eqnarray*}&& \delta=(\rho_\star-\rho)\,\sin (\beta/2)\\
{\mbox{and }}&& R=\rho_\star\,\sin (\beta/2).\end{eqnarray*}
Thus
\begin{equation}\label{08}
\dd(p+\rho\omega)\ge \delta = \frac{(\rho_\star-\rho)\,R}{\rho_\star}
.\end{equation}
So, if~$\dr(p+\rho\omega)\in[0,r]$ we have that
$$ r\ge \frac{(\rho_\star-\rho)\,R}{\rho_\star},$$
that is~$\rho\ge \rho_\star(1-r R^{-1})$,
which proves~\eqref{TRIG}.

Therefore, using~\eqref{TRIG} and the substitution~$t=\rho_\star^{-1}\rho$, we conclude that
\begin{equation*}\begin{split}
& \int_{R}^{+\infty}\,d\rho \;\frac{\chi_\Omega(p+\rho\omega)\,
\chi_{[0,r]} \big( \dr(p+\rho\omega)\big)}{\rho^{1+2s}}\le
\int_{\rho_\star(1-r R^{-1})}^{\rho_\star}
\frac{d\rho}{\rho^{1+2s}} \\
&\qquad=\frac{1}{\rho_\star^{2s}}
\int_{1-r R^{-1}}^{1}
\frac{dt}{t^{1+2s}}\le \frac{C}{\rho_\star^{2s}}
\left( \frac{1}{(1-r R^{-1})^{2s}}-1\right) \le \frac{Cr R^{-1}}{\rho_\star^{2s}}.
\end{split}\end{equation*}
This and~\eqref{099} imply~\eqref{09}.
\end{proof}

\begin{rem}
We notice that the estimate in~\eqref{09} is optimal, since one
can consider, for instance,
the case in which~$p:=0$ and~$\Omega:=B_{3R}$.
Indeed, in this case, if~$\rho\in [3R-r,3R]$, then~$\rho\omega
\in \partial B_\rho$, thus~$\dr(\rho\omega)= 3R-\rho \in [0,r]$,
and so
\begin{eqnarray*}
&& \int_{R}^{+\infty}\,d\rho \;\frac{\chi_\Omega(p+\rho\omega)\,
\chi_{[0,r]} \big( \dr(p+\rho\omega)\big)}{\rho^{1+2s}} \ge
\int_{3R-r}^{3R} \frac{
d\rho}{\rho^{1+2s}} \\
&&\qquad= \frac{1}{2s} \left( \frac{1}{(3R-r)^{2s}}
-\frac{1}{(3R)^{2s}}\right) =
\frac{1}{2s\,(3R)^{2s}} \left( \frac{1}{(1-\frac{r}{3R})^{2s}}-1\right)\geq \frac{c\,r}{R^{1+2s}},
\end{eqnarray*}
from which it follows
that~\eqref{09} is sharp.
\end{rem}

\begin{rem} The convexity assumption in Lemma \ref{AT1} cannot be dropped.
As a counterexample, let us endow~$\R^n$ with
coordinates~$(x',x_n)\in\R^{n-1}\times\R$, let us take~$p=0$, $\omega=e_1$,
and
$$ \Omega=B_R\cup \big\{|x_n|< R e^{-|x'|/R}\big\}.$$
Notice that~$\Omega$ is not convex.
Also, the points of the form~$(\rho,0,\cdots,0,R e^{-\rho/R})$
belong to~$\partial\Omega$, for any~$\rho\ge 1$. Hence
$$\dr(\rho\omega)\le R e^{-\rho/R}\le r$$
for any~$\rho\ge R\log (Rr^{-1})$.
Hence
$$
\int_{R}^{+\infty}\,d\rho \;\frac{\chi_\Omega(p+\rho\omega)\,
\chi_{[0,r]} \big( \dr(p+\rho\omega)\big)}{\rho^{1+2s}}
\ge
\int_{R\log (Rr^{-1})}^{+\infty}\frac{d\rho}{\rho^{1+2s}}
= \frac{C}{(R\log (Rr^{-1}))^{2s}}.$$
This quantity is larger than the right hand side of~\eqref{09}
when~$r$ is small, and this shows that the convexity assumption is
essential in such result.
\end{rem}

Next is a variation of Lemma~\ref{AT1}. Namely, the integral
on the left hand side
is modified by a weight depending on the distance
(and the condition~$\dr(p+\rho\omega)\in[0,r]$ is dropped).

\begin{lem}\label{AT2}
Let~$\alpha\in[s,1+s)$.
Let~$p$, $q\in \R^n$, $R>0$ and~$\omega\in S^{n-1}$.
Let~$\Omega\subset\R^n$ be a convex open set, with~$B_R(p)\cup B_R(q)
\subseteq\Omega$.
Then there exists~$C>0$, possibly\footnote{More precisely, the constant~$C$ can be
bounded by~$C'/s$, with~$C'$ independent of~$s$.
To see that, one can modify the proof by arguing as follows.
First of all, one notices that~$1+\alpha-s\ge 1$ and that
that
$$ \int_{1/2}^1 \frac{dt}{(1-t)^{\alpha-s} t^{1+2s}}\le
2^{1+2s}\int_{1/2}^1 \frac{dt}{(1-t)^{\alpha-s} }\le  C'',$$
with~$C''$ independent of~$s$.
This says that~$\psi(\mu)\le\psi(1/2)\le C''$ for any~$\mu
\in [1/2,1]$. Conversely, if~$\mu\in(0,1/2)$, then
\begin{eqnarray*}&& \psi(\mu)\le \mu^{2s}
\int_\mu^{1/2} \frac{dt}{(1-t)^{\alpha-s} t^{1+2s}} + C''
\le\mu^{2s}\int_\mu^{1/2} \frac{dt}{(1/2)^{\alpha-s} t^{1+2s}} +C''
\le \frac{C'''}{s}+C'',\end{eqnarray*}
with~$C'''$ independent of~$s$, and therefore
$$ \sup_{\mu\in(0,1]} \psi(\mu)\le \frac{C''''}{s},$$
with~$C''''$ independent of~$s$.}
depending on~$\alpha$, $n$ and~$s$,
such that
\begin{equation}\label{09BIS}
\int_{R}^{+\infty}\,d\rho \;\frac{
\chi_\Omega(p+\rho\omega)\,\chi_\Omega(q+\rho\omega)
}{\dr^{\alpha-s}(p+\rho\omega,q+\rho\omega)\,
\rho^{1+2s}} \le C R^{-s-\alpha}.\end{equation}
\end{lem}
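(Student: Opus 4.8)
The strategy is to reduce the weighted estimate~\eqref{09BIS} to the already-established Lemma~\ref{AT1} by a dyadic decomposition of the region where the weight $\dr^{-(\alpha-s)}$ is large. Write $\Theta_p=\{p+\rho\omega:\rho\ge0\}$ and $\Theta_q=\{q+\rho\omega:\rho\ge0\}$, and set $d(\rho)=\dr(p+\rho\omega,q+\rho\omega)=\min\{\dr(p+\rho\omega),\dr(q+\rho\omega)\}$. The integrand is supported where both $p+\rho\omega$ and $q+\rho\omega$ lie in $\Omega$. First I would observe that when $d(\rho)\ge R$ the weight contributes at most $R^{-(\alpha-s)}$, and $\int_R^\infty \rho^{-1-2s}\,d\rho = C R^{-2s}$, which already gives the bound $C R^{-s-\alpha}$ on that part. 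So the whole issue is the set where $0<d(\rho)<R$, i.e.\ where at least one of the two shifted rays comes within distance $R$ of $\partial\Omega$.

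On that set I would decompose dyadically: for $k\ge 1$ let $A_k=\{\rho\ge R : 2^{-k}R\le d(\rho)< 2^{-k+1}R\}$. On $A_k$ the weight $\dr^{-(\alpha-s)}(\cdots)$ is comparable to $(2^{-k}R)^{-(\alpha-s)}$ (up to the constant $2^{\alpha-s}\le 2$), so
\begin{equation*}
\int_{A_k}\frac{\chi_\Omega(p+\rho\omega)\,\chi_\Omega(q+\rho\omega)}{\dr^{\alpha-s}(p+\rho\omega,q+\rho\omega)\,\rho^{1+2s}}\,d\rho
\le \frac{2^{\alpha-s}}{(2^{-k}R)^{\alpha-s}}\int_{A_k}\frac{d\rho}{\rho^{1+2s}}.
\end{equation*}
The key point is to control $\int_{A_k}\rho^{-1-2s}\,d\rho$. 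Since $\rho\in A_k$ forces $d(\rho)< 2^{-k+1}R$, at least one of $p+\rho\omega$, $q+\rho\omega$ has distance to $\partial\Omega$ less than $r_k:=2^{-k+1}R$; hence $A_k$ is contained in the union of the two sets
$\{\rho\ge R: \chi_\Omega(p+\rho\omega)\,\chi_{[0,r_k]}(\dr(p+\rho\omega))=1\}$ and the analogous set for $q$. Applying Lemma~\ref{AT1} with $r=r_k$ (valid once $R>2r_k$, i.e.\ for $k\ge 2$; the finitely many small $k$ are handled directly as below) to each, we get $\int_{A_k}\rho^{-1-2s}\,d\rho\le C\,r_k\,R^{-1-2s}= C\,2^{-k}R^{-2s}$. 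Combining, the $A_k$-term is at most $C\,(2^{-k}R)^{-(\alpha-s)}\cdot 2^{-k}R^{-2s}= C\,R^{-s-\alpha}\,2^{-k(1-\alpha+s)}$. Because $\alpha<1+s$ the exponent $1-\alpha+s$ is strictly positive, so the geometric series $\sum_{k\ge1}2^{-k(1-\alpha+s)}$ converges, yielding $\int_{\{0<d(\rho)<R\}}\le C\,R^{-s-\alpha}$.

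For the few leftmost dyadic scales where Lemma~\ref{AT1} does not apply (those $k$ with $r_k\ge R/2$, i.e.\ $k\le 2$), one bounds $\int_{A_k}\rho^{-1-2s}\,d\rho$ crudely by $\int_R^\infty \rho^{-1-2s}\,d\rho=CR^{-2s}$ and the weight by $(2^{-k}R)^{-(\alpha-s)}\le C R^{-(\alpha-s)}$, giving again $C\,R^{-s-\alpha}$; there are only boundedly many such $k$. Adding the three contributions ($d(\rho)\ge R$, the small-$k$ scales, and the convergent dyadic tail) gives~\eqref{09BIS}. The only real obstacle is bookkeeping: making sure the hypothesis $R>2r$ of Lemma~\ref{AT1} is respected in each dyadic block, tracking that the constant stays of the form $C'/s$ as claimed in the footnote — this comes precisely from the geometric-series sum $\sum_k 2^{-k(1-\alpha+s)}\le C/(1-\alpha+s)$ together with the observation $1+\alpha-s\ge 1$ used there — and confirming that the hypothesis $B_R(p)\cup B_R(q)\subseteq\Omega$ (rather than a single ball) is exactly what lets us apply Lemma~\ref{AT1} separately along $\Theta_p$ and along $\Theta_q$. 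Everything else is the elementary integral estimates above.
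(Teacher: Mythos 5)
Your proof is correct but takes a genuinely different route from the paper's. The paper argues directly: using the same convexity/trigonometry estimate~\eqref{08} that underlies Lemma~\ref{AT1}, one bounds $\dd(p+\rho\omega,q+\rho\omega)\ge (\rho_\star-\rho)R/\rho_\star$ in terms of the smaller of the two exit parameters $\rho_\star$, substitutes $t=\rho/\rho_\star$, and reduces to showing that the one-variable function $\psi(\mu)=\mu^{2s}\int_\mu^1(1-t)^{-(\alpha-s)}t^{-1-2s}\,dt$ is bounded on $(0,1]$, which is done by a continuity argument together with l'H\^opital at $\mu\to0$. You instead use Lemma~\ref{AT1} as a black box, perform a dyadic decomposition in the size of $d(\rho)$, and sum a geometric series, which converges precisely because $\alpha<1+s$. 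The two proofs are complementary: the paper's is self-contained and yields a cleaner closed form for the constant, while yours is more modular and makes transparent that Lemma~\ref{AT2} is exactly an ``integrated over all scales'' version of Lemma~\ref{AT1}. Two small inaccuracies in your write-up that do not affect the argument: the hypothesis $R>2r_k$ of Lemma~\ref{AT1} requires $k\ge 3$ (not $k\ge 2$ as stated at one point, though your later classification ``$r_k\ge R/2$ iff $k\le 2$'' is the correct one and is what you actually use); and the geometric-series tail contributes a factor $C/(1-\alpha+s)$, not $C/s$ --- the $1/s$ behavior claimed in the footnote comes from the $d(\rho)\ge R$ piece and the two small-$k$ scales, while the blow-up as $\alpha\to 1+s$ appears in the paper's constant as well (through $C''$), absorbed into the allowed $\alpha$-dependence.
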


\begin{proof}
We let~$\rho_\star=\sup\{ \rho {\mbox{ s.t. }}
p+\rho\omega\in \Omega\}\in [R,+\infty]$.
By convexity and trigonometry (see e.g.~\eqref{08}), we have that
$$\dd(p+\rho\omega)\ge
\frac{(\rho_\star-\rho)\,R}{\rho_\star},$$
with the obvious limit
notation that the formula above
reads~$\dd(p+\rho\omega)\ge R$ if~$\rho_\star=+\infty$.
Since the same formula holds for~$q$ instead of~$p$, we have that
$$ \dd(p+\rho\omega,q+\rho\omega)=\min\big\{\dd(p+\rho\omega),
\,\dd(q+\rho\omega)\big\}\ge \frac{(\rho_\star-\rho)\,R}{\rho_\star}.$$
Thus, the left hand side of~\eqref{09BIS} is bounded by
\begin{equation}\label{07}
\int_{R}^{\rho_\star}\,d\rho \;
\frac{\rho_\star^{\alpha-s}}{(\rho_\star-\rho)^{\alpha-s}\,R^{\alpha-s}\,\rho^{1+2s}}=
R^{s-\alpha}\rho_\star^{-2s} \int_{\mu_o}^1
\frac{dt}{(1-t)^{\alpha-s} t^{1+2s}},
\end{equation}
where we used the substitution~$t=\rho/\rho_\star$
and the notation~$\mu_o=R/\rho_\star\in(0,1]$.
Now, for any~$\mu\in(0,1]$, we set
$$ \psi(\mu)= \mu^{2s} \int_\mu^1 \frac{dt}{(1-t)^{\alpha-s} t^{1+2s}}$$
and we claim that
\begin{equation}\label{06}
\sup_{\mu\in(0,1]}\psi(\mu)\le C,
\end{equation}
for some~$C>0$. To check this, we recall that~$\alpha-s<1$
and we
notice that~$\psi(1)=0$.
Also,
$$ \int_0^1
\frac{dt}{(1-t)^{\alpha-s} t^{1+2s}}=+\infty,$$
so we compute, by de l'{H}\^opital rule,
$$ \lim_{\mu\to0}\psi(\mu)=
\lim_{\mu\to0}
\frac{ \int_\mu^1
\frac{dt}{(1-t)^{\alpha-s} t^{1+2s}} }{\mu^{-2s}}
= \lim_{\mu\to0}
\frac{ \;\;
\;\;
\frac{1}{(1-\mu)^{\alpha-s} \mu^{1+2s}}\;\;
\;\; }{2s\mu^{-1-2s}}
=\frac{1}{2s}.$$
Thus~$\psi$ can be extended to a continuous
function in~$[0,1]$ and so~\eqref{06} follows.

Then, we can bound~\eqref{07} using~\eqref{06}: we obtain
that the quantity in~\eqref{07} is controlled by
$$ R^{s-\alpha}\rho_\star^{-2s}\mu_o^{-2s}\psi(\mu_o)\le
CR^{s-\alpha}\rho_\star^{-2s}\mu_o^{-2s} =
CR^{s-\alpha}\rho_\star^{-2s} \cdot
R^{-2s}\rho_\star^{2s},$$
which proves~\eqref{09BIS}.
\end{proof}

\begin{rem}
We notice that the estimate in~\eqref{09BIS} is also optimal,
as one can see by taking~$p:=q:=0$ and~$\Omega:=B_{3R}$.
Indeed, in this case, if~$\rho\in [R,2R]$,
then the point~$\rho\omega$ lies on~$\partial B_\rho$
and so it is at distance~$3R-\rho\in[ R,2R]$ from~$\partial\Omega=\partial
B_{3R}$: hence, in this case
\begin{eqnarray*}
&& \int_{R}^{+\infty}\,d\rho \;\frac{
\chi_\Omega(p+\rho\omega)\,\chi_\Omega(q+\rho\omega)
}{\dr^{\alpha-s}(p+\rho\omega,q+\rho\omega)\,
\rho^{1+2s}} \ge
\int_{R}^{2R} \frac{d\rho}{(2R)^{\alpha-s} \rho^{1+2s}}
\\ &&\qquad=\frac{R^{-s-\alpha}}{
2^{\alpha-s+1}s} \left( 1-\frac{1}{2^{2s}}\right)=c\,R^{-s-\alpha},
\end{eqnarray*}
thus showing the optimality~\eqref{09BIS}.
\end{rem}

For further reference, we point out that Lemma~\ref{AT2}
is of course valid in particular when~$p=q$. 
In this case, its
statement simplifies in the following way:

\begin{lem}\label{AT2-simple}
Let~$\alpha\in[s,1+s)$.
Let~$p\in \R^n$, $R>0$ and~$\omega\in S^{n-1}$.
Let~$\Omega\subset\R^n$ be a convex open set, with~$B_R(p)\subseteq\Omega$.
Then there exists~$C>0$, possibly depending on~$\alpha$, $n$ and~$s$,
such that
\begin{equation*}
\int_{R}^{+\infty}\,d\rho \;\frac{
\chi_\Omega(p+\rho\omega)
}{\dr^{\alpha-s}(p+\rho\omega)\,
\rho^{1+2s}} \le C R^{-s-\alpha}.\end{equation*}
\end{lem}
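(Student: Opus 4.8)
The plan is to deduce Lemma~\ref{AT2-simple} directly from Lemma~\ref{AT2} by specializing $q=p$. First I would observe that the hypotheses of Lemma~\ref{AT2} are automatically satisfied in the present setting: we are given $\alpha\in[s,1+s)$, $p\in\R^n$, $R>0$, $\omega\in S^{n-1}$, and a convex open set $\Omega$ with $B_R(p)\subseteq\Omega$; taking $q:=p$ we trivially have $B_R(p)\cup B_R(q)=B_R(p)\subseteq\Omega$, so all assumptions of Lemma~\ref{AT2} hold.

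Next I would unravel the notation. Recall from~\eqref{dxy} that $\dr(x,y)=\min\{\dr(x),\dr(y)\}$. Hence, with the choice $q=p$, for every $\rho\ge R$ we have
\begin{equation*}
\dr\big(p+\rho\omega,\,q+\rho\omega\big)=\dr\big(p+\rho\omega,\,p+\rho\omega\big)=\min\{\dr(p+\rho\omega),\dr(p+\rho\omega)\}=\dr(p+\rho\omega),
\end{equation*}
and similarly $\chi_\Omega(q+\rho\omega)=\chi_\Omega(p+\rho\omega)$, so that $\chi_\Omega(p+\rho\omega)\,\chi_\Omega(q+\rho\omega)=\chi_\Omega(p+\rho\omega)$ (the characteristic function is idempotent). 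Substituting these identities into the integrand appearing on the left-hand side of~\eqref{09BIS} turns it exactly into the integrand on the left-hand side of the claimed inequality.

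Therefore the left-hand side of the inequality in Lemma~\ref{AT2-simple} equals the left-hand side of~\eqref{09BIS} with $q=p$, and the desired bound $\int_{R}^{+\infty}d\rho\;\dr^{s-\alpha}(p+\rho\omega)\,\rho^{-1-2s}\,\chi_\Omega(p+\rho\omega)\le CR^{-s-\alpha}$ follows immediately, with the same constant $C$ (depending only on $\alpha$, $n$, $s$) furnished by Lemma~\ref{AT2}.

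There is essentially no obstacle here: the statement is a verbatim specialization, and the only thing to be careful about is the bookkeeping with the $\dr(\cdot,\cdot)$ notation of~\eqref{dxy}, making sure the reader sees that $\dr(x,x)=\dr(x)$ and that the product of the two identical characteristic functions collapses to one. Since the excerpt itself already remarks that Lemma~\ref{AT2} "is of course valid in particular when $p=q$", I would keep the proof to a couple of sentences of this kind rather than reproving anything.
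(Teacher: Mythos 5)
Your proposal is correct and matches the paper's own treatment exactly: the paper states Lemma~\ref{AT2-simple} as the specialization of Lemma~\ref{AT2} to $p=q$, with no separate proof given. Your bookkeeping that $\dr(x,x)=\dr(x)$ by~\eqref{dxy} and that $\chi_\Omega^2=\chi_\Omega$ is precisely what makes the specialization go through.
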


The next integral computation is a simple, but operational, consequence of
elementary geometry:

\begin{lem}\label{dsfvgddseerw}
Let~$R>0$,
$p\in B_R$,
and~$\omega\in S^{n-1}$.
Let~$\Omega\subset\R^n$ be a convex open set, with~$
B_{3R}\cap (\partial\Omega)\ne\varnothing$.
Then there exists~$C>0$, possibly depending on~$n$ and~$s$,
such that
\begin{equation*}
\int_{R}^{+\infty}\,d\rho \;\frac{
\dr^{s}(p+\rho\omega)
}{
\rho^{1+2s}} \le C R^{-s}.\end{equation*}
\end{lem}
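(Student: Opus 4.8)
The plan is to use the hypothesis $B_{3R}\cap(\partial\Omega)\neq\varnothing$ in the crudest possible way: it supplies a single boundary point lying within bounded distance of the region of integration, and from that point alone one bounds the distance function linearly in~$\rho$. So, first I would fix some
$$ x_0\in B_{3R}\cap(\partial\Omega).$$
Since $x_0\in\partial\Omega$, for every $y\in\R^n$ we have the elementary inequality $\dd(y)=\mathrm{dist}(y,\partial\Omega)\le|y-x_0|$. Applying this with $y=p+\rho\omega$, and using the triangle inequality together with $|p|<R$ (as $p\in B_R$), $|x_0|<3R$, and $|\omega|=1$, I get
$$ \dd(p+\rho\omega)\le|p|+|x_0|+\rho<4R+\rho\qquad\text{for all }\rho\ge 0. $$

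Next, in the range $\rho\ge R$ appearing in the integral one has $4R+\rho\le 5\rho$, hence $\dd^{\,s}(p+\rho\omega)\le 5^{s}\rho^{s}$. Inserting this bound reduces the statement to the one-dimensional computation
$$ \int_{R}^{+\infty}\frac{\dd^{\,s}(p+\rho\omega)}{\rho^{1+2s}}\,d\rho\le 5^{s}\int_{R}^{+\infty}\rho^{-1-s}\,d\rho=\frac{5^{s}}{s}\,R^{-s}, $$
where the integral converges precisely because $s>0$. This is the claimed estimate, with $C=5^{s}/s$ (in particular $C\le 5/s$ since $s\le 1$).

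There is essentially no obstacle here: the only points needing (minimal) care are that the triangle-inequality bound for $\dd$ requires a genuine point of $\partial\Omega$, which is exactly what the nonemptiness of $B_{3R}\cap(\partial\Omega)$ furnishes, and that the convergence of the $\rho$-integral uses $s>0$. Note that convexity of $\Omega$ is not actually needed for this particular estimate; if one wished, one could instead bound $\dd(p+\rho\omega)$ by the distance of $p+\rho\omega$ to a supporting hyperplane of $\Omega$ at $x_0$, which is again at most $|p|+|x_0|+\rho$, but this refinement is unnecessary.
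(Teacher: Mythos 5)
Your proof is correct and takes essentially the same route as the paper: pick a boundary point $q_o\in B_{3R}\cap(\partial\Omega)$, bound $\dd(p+\rho\omega)\le|p|+|q_o|+\rho\le C(R+\rho)$ by the triangle inequality, and integrate. Your shortcut $4R+\rho\le5\rho$ on $\{\rho\ge R\}$ replaces the paper's substitution $\rho=Rt$, and your observation that convexity plays no role here is accurate (the paper's $C^{1,1}$ counterpart, Lemma~\ref{dsfvgddseerw:bis}, uses the identical argument).
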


\begin{proof} Let~$q_o\in
B_{3R}\cap (\partial\Omega)$. Then
$$ \dr(p+\rho\omega)\le |p+\rho\omega-q_o|\le |p|+|q|+\rho\le C(R+\rho).$$
Therefore, using the substitution~$\rho=Rt$, we obtain
\begin{equation*}
\int_{R}^{+\infty}\,d\rho \;\frac{\dr^{s}(p+\rho\omega)
}{
\rho^{1+2s}} \le C
\int_{R}^{+\infty}\,d\rho \;\frac{
(R+\rho)^s
}{
\rho^{1+2s}} =
CR^{-s}
\int_{1}^{+\infty}\,dt \;\frac{
(1+t)^s
}{
t^{1+2s}}, \end{equation*}
that gives the desired result.
\end{proof}

\section{Integral computations for $C^{1,1}$ domains and bounded measures}\label{S:2b}

In this section, we consider
bounded domains with~$C^{1,1}$ boundary
and bounded measures~$a$
and we obtain the corresponding results
of Section~\ref{S:2a} in this framework. More precisely,
we obtain the results analogous to Lemmata~\ref{AT1},
\ref{AT2}, \ref{AT2-simple}
and~\ref{dsfvgddseerw}, when the convexity assumption on the domain
is replaced by a regularity assumption on the domain and the boundedness of the measure.

The counterpart of Lemma~\ref{AT1} in the
setting of this section is the following:

\begin{lem}\label{AT1:bis}
Let~$p\in \R^n$ and $R>2r>0$.
Let~$\Omega\subset\R^n$ be a
bounded domain with~$C^{1,1}$ boundary, with~$B_R(p)\subseteq\Omega$.
Then there exists~$C>0$, possibly depending on~$n$, $s$ and~$\Omega$,
such that
\begin{equation}\label{7890jj9900} \int_{\R^n\setminus B_R}
\frac{\chi_\Omega(p+x)\,
\chi_{[0,r]} \big( \dr(p+x)\big)}{|x|^{n+2s}}\,dx \le Cr R^{-1-2s}.\end{equation}
\end{lem}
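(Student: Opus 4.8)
The plan is to reduce the $n$-dimensional integral over $\R^n\setminus B_R$ to a one-dimensional line integral in the radial variable and then invoke the convex case, Lemma~\ref{AT1}, applied not to $\Omega$ itself (which is not convex here) but to a suitable interior ball. First I would pass to polar coordinates centered at $p$, writing $x=\rho\omega$ with $\rho>R$ and $\omega\in S^{n-1}$, so that the left-hand side of \eqref{7890jj9900} becomes, up to the constant $|S^{n-1}|$-type normalization absorbed in $C$,
\begin{equation*}
\int_{S^{n-1}}\,d{\mathcal{H}}^{n-1}(\omega)\int_R^{+\infty}\,d\rho\;\rho^{n-1}\cdot\frac{\chi_\Omega(p+\rho\omega)\,\chi_{[0,r]}\big(\dr(p+\rho\omega)\big)}{\rho^{n+2s}}
=\int_{S^{n-1}}\,d{\mathcal{H}}^{n-1}(\omega)\int_R^{+\infty}\,d\rho\;\frac{\chi_\Omega(p+\rho\omega)\,\chi_{[0,r]}\big(\dr(p+\rho\omega)\big)}{\rho^{1+2s}}.
\end{equation*}
The inner integral is exactly the quantity estimated in Lemma~\ref{AT1}, \emph{provided} $\Omega$ is convex. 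It is not, so the key idea is: along each ray, the distance function $\dr(p+\rho\omega)$ to $\partial\Omega$ is bounded below by the distance to the boundary of any convex subset of $\Omega$ containing $B_R(p)$, in particular to $\partial(B_R(p)\cup\{p+\rho\omega\})$'s convex hull — which is precisely the convex object analyzed inside the proof of Lemma~\ref{AT1}. More simply, since $B_R(p)\subseteq\Omega$, one can apply Lemma~\ref{AT1} verbatim with the convex set taken to be $\Omega$ itself whenever $\Omega$ is convex, but in the general $C^{1,1}$ case one instead uses that the $C^{1,1}$ regularity of $\partial\Omega$ forces an interior ball condition of some radius $\varrho_0=\varrho_0(\Omega)>0$ at every boundary point, and couples this with the exterior ball condition.

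Concretely, the step I expect to do the real work is the following geometric claim, replacing \eqref{TRIG}: \emph{if $p+\rho\omega\in\Omega$ and $\dr(p+\rho\omega)\le r$ with $r<R/2$, then $\rho$ ranges in an interval of length at most $C r$}, with $C$ depending on $\Omega$ through its $C^{1,1}$ character and on $R$. To see this, let $\rho_\star=\sup\{\rho:\ p+\rho'\omega\in\Omega\ \forall\,\rho'\in[R,\rho]\}$ be the first exit radius along the ray; it satisfies $\rho_\star\ge R$ since $B_R(p)\subseteq\Omega$. The interior ball condition at the exit point $q_\star=p+\rho_\star\omega$, together with $B_R(p)\subseteq\Omega$, produces a lower bound $\dr(p+\rho\omega)\ge c(\Omega)\,(\rho_\star-\rho)$ for $\rho$ slightly below $\rho_\star$ (the constant degrades compared to the clean $R(\rho_\star-\rho)/\rho_\star$ of the convex case, because the convex hull of $B_R(p)$ and $q_\star$ need no longer lie inside $\Omega$ — this is where one must instead run a local argument near $q_\star$ using the interior ball of radius $\varrho_0$). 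One then must also rule out \emph{re-entry}: the ray can leave and re-enter $\Omega$, so the set $\{\rho>R:\ p+\rho\omega\in\Omega,\ \dr(p+\rho\omega)\le r\}$ is a union of intervals, one near each boundary crossing; since $\Omega$ is bounded with $C^{1,1}$ boundary, the number of crossings of a single line with $\partial\Omega$ is controlled (or, more robustly, one bounds the \emph{total} measure of this set by $C r$ by a covering argument on $\partial\Omega$). On each such sub-interval, near a crossing at radius $\rho_j\ge R$, the same trigonometric estimate gives length $\le C r$, and since each $\rho_j\ge R$ the contribution to the integral is $\le C r\,\rho_j^{-1-2s}\le C r\,R^{-1-2s}$; summing the finitely many (boundedly many) contributions yields \eqref{7890jj9900} after integrating the harmless bounded factor $d{\mathcal{H}}^{n-1}(\omega)$ over $S^{n-1}$.

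The main obstacle, and the place where boundedness of the measure $a$ and the $C^{1,1}$ hypothesis genuinely enter, is exactly the control of re-entry and the degradation of the lower bound $\dr(p+\rho\omega)\gtrsim(\rho_\star-\rho)$: in the convex case a single global trigonometric picture suffices, whereas here one must localize near each boundary crossing and use the uniform interior/exterior ball radii coming from $C^{1,1}$ regularity, then sum. I would organize the write-up as: (1) polar coordinates reduction; (2) for a fixed $\omega$, decompose the ray's intersection with $\{\dr\le r\}$ into pieces near boundary crossings, bounding the number of pieces by a constant depending on $\Omega$; (3) on each piece, the local interior ball estimate gives length $\le Cr$; (4) since every crossing radius exceeds $R$, each piece contributes $\le CrR^{-1-2s}$; (5) integrate over $S^{n-1}$, the angular integral contributing only a dimensional constant. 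I expect step (3) — extracting the linear lower bound on the distance from the interior ball condition uniformly in the crossing — to be the technical heart, and step (2) — the finiteness of the number of crossings, or equivalently a clean measure bound — to be the conceptually delicate point where the regularity of $\partial\Omega$ is indispensable.
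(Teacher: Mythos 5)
There is a genuine gap, and the paper's actual proof takes a quite different route. The paper does \emph{not} pass to polar coordinates or attempt a per-ray estimate. It translates to $p=0$, rescales $z=x/R$, decomposes $\R^n\setminus B_1$ into dyadic shells $B_{2^{k+1}}\setminus B_{2^k}$, and on each shell bounds the $n$-dimensional Lebesgue measure of the tube $\{z\in\Omega_R : {\rm dist}(z,\partial\Omega_R)\le \bar r\}$ by $C\bar r\,\mathcal{H}^{n-1}\big((\partial\Omega_R)\cap(B_{2^{k+2}}\setminus B_{2^{k-1}})\big)$ via the coarea/covering estimate of Proposition~\ref{appa}, then controls that surface measure by $C\,2^{k(n-1)}$ using Lemma~\ref{app0987uv}; after dividing by $|z|^{n+2s}\approx 2^{k(n+2s)}$, each shell contributes $C\bar r\,2^{-k(1+2s)}$ and the geometric series closes. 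This is intrinsically an averaged, $n$-dimensional argument.

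The fatal flaw in your proposal is step~(3). You assert that, on each piece of $\{\rho>R : p+\rho\omega\in\Omega,\ \dr(p+\rho\omega)\le r\}$, the interior ball condition yields a linear lower bound $\dr(p+\rho\omega)\ge c\,(\rho_\star-\rho)$ and hence length $\le Cr$. This is false for directions in which the ray meets $\partial\Omega$ (nearly) tangentially, which certainly occurs for a non-convex $C^{1,1}$ domain even with $B_R(p)\subseteq\Omega$: near a tangential touch at radius $\rho_0$, the exterior ball condition gives only the \emph{upper} bound $\dr(p+\rho\omega)\le C(\rho-\rho_0)^2/\varrho_0$, so $\dr\le r$ on a segment of length comparable to $\sqrt{r\varrho_0}$, and the inner integral is of order $\sqrt{r}$ --- far larger than $r R^{-1-2s}$ for small $r$. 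In the convex case the cone spanned by $B_R(p)$ and the (unique) exit point sits inside $\Omega$ and forces transversality $\ge R/\rho_\star$, which is exactly what produces \eqref{TRIG}; no analogue survives in the non-convex $C^{1,1}$ setting, and the interior ball alone cannot replace it. This is precisely the geometric reason why \eqref{HYP2} requires the operator to be ``spread over $\R^n$'' as in \eqref{L2} rather than a line integral as in \eqref{L}: the nearly-tangent directions form a set of small $\mathcal{H}^{n-1}$-measure, and only the volume/surface-measure argument sees this compensation, whereas a per-ray bound does not. As a secondary problem, step~(2) also cannot be carried out as stated, since a bounded $C^{1,1}$ boundary can meet a single line in infinitely many points (one can construct a $C^{1,1}$ graph whose zero set accumulates), so ``finitely many crossings, each of length $\le Cr$'' is not available even in principle. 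To repair the argument you would have to abandon the polar reduction and bound the measure of $\{x\in\Omega\cap(B_{2^{k+1}R}\setminus B_{2^kR}) : \dr(x)\le r\}$ directly, which is exactly what the paper does.
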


\begin{proof}
First, notice that, by possibly replacing~$\Omega$
with the translated domain~$\Omega-p$,
we may assume that $p=0$.

Also, we can suppose that~$R$ is less then the diameter of~$\Omega$,
otherwise the condition~$B_R\subseteq\Omega$ cannot hold.
Then, we perform the change of variables $z=\frac{x}{R}$, so that~\eqref{7890jj9900}
becomes
\begin{equation}\label{7890jj9900.2}
\int_{\R^n\setminus B_1} \frac{
\chi_{\Omega_R}(z)\,\chi_{[0,\bar r]}
\big( {\rm dist}\,(z,\partial\Omega_R)\big)}{|z|^{n+2s}}\,dz \le C\bar r,\end{equation}
where we denoted $\Omega_R=\frac1R \Omega$ and $\bar r=\frac{r}R$.
Notice that $\Omega_R$ has a bounded $C^{1,1}$ norm (uniformly in $R$), and
in fact converges to a half-space as $R\rightarrow0^+$.
As a consequence, we can apply Proposition~\ref{appa}:
we obtain that there exists~$\kappa_*>0$ such
that, if~$\bar r\in(0,\kappa_*]$ then
\begin{equation}\label{0doxvddg}
\begin{split}
& \big|
\big\{
x\in \Omega_R \cap ( B_{2^{k+1}}\setminus B_{2^k}) {\mbox{ s.t. }}
{\rm dist}\,(x,\partial\Omega_R)\in[0,\bar r]\big\}\big|\\
&\qquad
\le C \bar r
\,{\mathcal{H}}^{n-1} \big( (\partial\Omega_R)
\cap (B_{2^{k+2}}\setminus B_{2^{k-1}})
\big)
\end{split}\end{equation}
for every~$k\in\N$.
Now we estimate the latter term by scaling back
to the original domain and exploiting Lemma~\ref{app0987uv}.
We obtain
\begin{eqnarray*}
&& {\mathcal{H}}^{n-1} \big( (\partial\Omega_R)
\cap (B_{2^{k+2}}\setminus B_{2^{k-1}})\big) \\
&=& \frac{1}{R^{n-1}}
{\mathcal{H}}^{n-1} \big( (\partial\Omega)
\cap (B_{2^{k+2}R}\setminus B_{2^{k-1}R})\big)
\\ &\le& \frac{C (2^{k-1} R)^{n-1}}{R^{n-1}}.
\end{eqnarray*}
This and~\eqref{0doxvddg} give that
$$ \big|
\big\{
x\in \Omega_R \cap ( B_{2^{k+1}}\setminus B_{2^k}) {\mbox{ s.t. }}
{\rm dist}\,(x,\partial\Omega_R)\in[0,\bar r]\big\}\big|\\
\le C 2^{k(n-1)}\bar r .$$
As a consequence, if~$\bar r\in(0,\kappa_*]$,
\begin{eqnarray*}&&
\int_{ B_{2^{k+1}}\setminus B_{2^k} }
\frac{\chi_{\Omega_R}(z)\,\chi_{[0,{\bar r}]}
\big( {\rm dist}\,(z,\partial\Omega_R)\big)}{|z|^{n+2s}}\,dz
\\ &\le&
\int_{ B_{2^{k+1}}\setminus B_{2^k} }
\frac{\chi_{\Omega_R}(z)\,\chi_{[0,{\bar r}]}
\big( {\rm dist}\,(z,\partial\Omega_R)\big)}{2^{k(n+2s)}}\,dz
\\&\le&
\frac{ C \bar r}{ 2^{k(1+2s)}}.
\end{eqnarray*}
By summing over~$k$, we obtain that
\begin{eqnarray*}
&& \int_{\R^n\setminus B_1}
\frac{\chi_{\Omega_R}(z)\,\chi_{[0,{\bar r}]}
\big( {\rm dist}\,(z,\partial\Omega_R)\big)}{|z|^{n+2s}}\,dz \\&\le&
\sum_{k\ge 0}
\int_{ B_{2^{k+1}}\setminus B_{2^k} }
\frac{\chi_{\Omega_R}(z)\,\chi_{[0,{\bar r}]}
\big( {\rm dist}\,(z,\partial\Omega_R)\big)}{|z|^{n+2s}}\,dz
\\ &\le&\sum_{k\ge 0}
\frac{ C \bar r
}{2^{k(1+2s)}}\\&\le& C \bar r,
\end{eqnarray*}
up to renaming~$C$.
This gives~\eqref{7890jj9900.2}
when~$\bar r\le \kappa_*$, with a constant $C$ that does not depend on $s\in(0,1)$.

If instead~$\bar r>\kappa_*$, then
$$\int_{\R^n\setminus B_1} \frac{
\chi_{\Omega_R}(z)\,\chi_{[0,\bar r]}
\big( {\rm dist}\,(z,\partial\Omega_R)\big)}{|z|^{n+2s}}\,dz
\le
\int_{\R^n\setminus B_1}
\frac{dz}{|z|^{n+2s}}\le C\le \frac{C\bar r}{\kappa_*}.$$
This shows that~\eqref{7890jj9900.2}
also holds in this case, up to renaming constants,
and this completes the proof of
Lemma~\ref{AT1:bis}.
Notice that the constant $C$ can be bounded by $C'/s$, with $C'$ independent of $s$.
\end{proof}

Following is the variation of Lemma~\ref{AT2-simple}
needed in the setting of this section:

\begin{lem}\label{AT2-simple:bis}
Let~$\alpha\in[s,1+s)$, $p\in\R^n$ and~$R>0$.
Let~$\Omega\subset\R^n$ be a
bounded domain with~$C^{1,1}$ boundary, with~$B_R(p)
\subseteq\Omega$.
Then there exists~$C>0$, possibly depending on~$\alpha$, $n$, $s$
and~$\Omega$,
such that
\begin{equation}\label{09BIS:bis}
\int_{\R^n\setminus B_R} \frac{
\chi_\Omega(p+x)
}{\dr^{\alpha-s}(p+x)\,
|x|^{n+2s}}\,dx \le C R^{-s-\alpha}.\end{equation}
\end{lem}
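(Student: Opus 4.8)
The plan is to mimic the proof of Lemma~\ref{AT1:bis}, adding a layer‑cake decomposition in the distance variable in order to absorb the (mildly) singular weight~$\dr^{-(\alpha-s)}$. After translating~$\Omega$ we may assume~$p=0$, and we may assume that~$R$ is smaller than the diameter of~$\Omega$, since otherwise~$B_R\subseteq\Omega$ is impossible and the statement holds vacuously. Setting~$z=x/R$ and~$\Omega_R=\frac1R\Omega$, and using that~${\rm dist}\,(Rz,\partial\Omega)=R\,{\rm dist}\,(z,\partial\Omega_R)$ and~$|x|=R|z|$, the left‑hand side of~\eqref{09BIS:bis} equals~$R^{-s-\alpha}$ times
$$\int_{\R^n\setminus B_1}\frac{\chi_{\Omega_R}(z)}{{\rm dist}\,^{\alpha-s}(z,\partial\Omega_R)\,|z|^{n+2s}}\,dz ,$$
so it suffices to bound this integral by a constant independent of~$R$. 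As in Lemma~\ref{AT1:bis}, $\Omega_R$ has~$C^{1,1}$ norm bounded uniformly in~$R$ (and flattens towards a half‑space as~$R\to0^+$), so the constant~$\kappa_*$ from Proposition~\ref{appa} and the constant from Lemma~\ref{app0987uv} (used after scaling back to~$\Omega$) can be taken uniform in~$R$.

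Next I would decompose~$\R^n\setminus B_1=\bigcup_{k\ge0}A_k$ with~$A_k=B_{2^{k+1}}\setminus B_{2^k}$, so that on~$A_k$ one has~$|z|^{-(n+2s)}\le 2^{-k(n+2s)}$, whence the $k$‑th contribution is at most~$2^{-k(n+2s)}\int_{A_k\cap\Omega_R}{\rm dist}\,^{-(\alpha-s)}(z,\partial\Omega_R)\,dz$. I would bound this inner integral by splitting~$A_k\cap\Omega_R$ into the set~$\{{\rm dist}\,(\cdot,\partial\Omega_R)>\kappa_*\}$, on which the weight is~$\le\kappa_*^{-(\alpha-s)}$ and whose measure is~$\le|A_k|\le C2^{kn}$, and the set~$\{{\rm dist}\,(\cdot,\partial\Omega_R)\le\kappa_*\}$, which I would further decompose into dyadic shells~$S_j=\{z\in A_k\cap\Omega_R:\ {\rm dist}\,(z,\partial\Omega_R)\in(2^{-j-1}\kappa_*,\,2^{-j}\kappa_*]\}$, $j\ge0$. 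On~$S_j$ the weight is at most~$(2^{-j-1}\kappa_*)^{-(\alpha-s)}$, and Proposition~\ref{appa} (applicable since~$2^{-j}\kappa_*\le\kappa_*$), together with the Hausdorff estimate of Lemma~\ref{app0987uv} rescaled exactly as in Lemma~\ref{AT1:bis}, gives
$$|S_j|\le C\,2^{-j}\kappa_*\;{\mathcal{H}}^{n-1}\big((\partial\Omega_R)\cap(B_{2^{k+2}}\setminus B_{2^{k-1}})\big)\le C\,2^{-j}\,2^{k(n-1)} .$$
Summing the resulting bounds over the shells, the series~$\sum_{j\ge0}2^{-j(1-(\alpha-s))}$ converges precisely because~$\alpha-s<1$ (this is where the hypothesis~$\alpha<1+s$ enters), so that~$\int_{A_k\cap\Omega_R}{\rm dist}\,^{-(\alpha-s)}(z,\partial\Omega_R)\,dz\le C(2^{k(n-1)}+2^{kn})\le C2^{kn}$, with~$C$ independent of~$k$ and~$R$.

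Finally, summing over~$k$ yields~$\sum_{k\ge0}2^{-k(n+2s)}\cdot C\,2^{kn}=C\sum_{k\ge0}2^{-2ks}<+\infty$, which is the desired uniform bound; as in Lemma~\ref{AT1:bis}, the constant can be taken of the form~$C'/s$ with~$C'$ independent of~$s$.

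The step requiring care — and the reason a single‑scale estimate fails — is the control of the near‑boundary region inside each annulus: one must see that it contributes only~$2^{k(n-1)}$ rather than~$2^{kn}$ (an $(n-1)$‑dimensional surface bound, via Proposition~\ref{appa}, rather than an $n$‑dimensional volume bound), and that the layer‑cake sum in~$j$ is controlled by the \emph{fixed} constant~$\kappa_*$. Only then, after multiplication by~$2^{-k(n+2s)}$, is one left with a convergent geometric series whose sum does not see the diameter of~$\Omega_R$, which blows up as~$R\to0^+$. The remaining ingredients (the reduction and rescaling, the uniform $C^{1,1}$ control of~$\Omega_R$, and the rescaled Hausdorff estimate) are entirely parallel to Lemma~\ref{AT1:bis} and require no new idea.
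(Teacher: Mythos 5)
Your proposal is correct, and it is structurally the same argument as the paper's: reduce to $p=0$, rescale by $R$ to get a domain $\Omega_R$ with uniformly bounded $C^{1,1}$ norm, decompose $\R^n\setminus B_1$ into dyadic annuli $A_k=B_{2^{k+1}}\setminus B_{2^k}$, split each annulus into the far region $\{\mathrm{dist}>\kappa_*\}$ (trivially bounded) and the near region $\{\mathrm{dist}\le\kappa_*\}$ where one must extract the ``surface, not volume'' gain of $2^{k(n-1)}$ so that the series over $k$ converges against $2^{-k(n+2s)}$. Where you differ is in the implementation of the near-boundary estimate: you replace the paper's Coarea Formula combined with the level-set surface bound of Corollary~\ref{CR} by a dyadic layer-cake decomposition in the distance variable, $S_j=\{\mathrm{dist}\in(2^{-j-1}\kappa_*,2^{-j}\kappa_*]\}$, together with the slab-measure bound of Proposition~\ref{appa} (the same tool used in Lemma~\ref{AT1:bis}); the sum $\sum_j 2^{-j(1-(\alpha-s))}$ then converges because $\alpha-s<1$, exactly as the paper's $\int_0^{\kappa_*}t^{-(\alpha-s)}\,dt<\infty$ does. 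The two are essentially equivalent (the layer-cake version is the discretized counterpart of the coarea integral); your route has the small advantage of reusing Proposition~\ref{appa} and thus paralleling Lemma~\ref{AT1:bis} more literally, while the paper's route is a little more direct because Corollary~\ref{CR} hands over the surface measure of each level set. One minor point to note: when invoking Proposition~\ref{appa} on the shell $S_j$ inside $A_k$ with $\mu=2^{-j}\kappa_*$, the hypothesis $R_2-R_1>2\mu$ requires $2^k>2^{1-j}\kappa_*$, which is automatic once $\kappa_*$ is taken, say, less than $1/2$; this is the same implicit reduction the paper makes in Lemma~\ref{AT1:bis}, so it is not a gap, but it is worth stating.
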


\begin{proof} Up to a translation of the domain, we suppose that~$p=0$.
In addition, we can suppose that~$R$ is less then the diameter of~$\Omega$,
otherwise the condition~$B_R\subseteq\Omega$ cannot hold.
Hence we do the change of variables $z=\frac{x}{R}$, so that~\eqref{09BIS:bis}
reduces to
\begin{equation}\label{7890fsdddss-1}
\int_{\R^n\setminus B_1} \frac{
\chi_{\Omega_R}(z)
}{{\rm dist}^{\alpha-s}(z,\partial\Omega_R)\,
|z|^{n+2s}}\,dz \le C
\end{equation}
where we denoted $\Omega_R=\frac1R \Omega$.
Since~$\Omega_R$ has a bounded $C^{1,1}$ norm (uniformly in $R$,
and indeed it converges to a half-space as $R\rightarrow0^+$),
we can apply Corollary~\ref{CR} and obtain that there exists~$\kappa_*>0$
such that, for any~$t\in(0,\kappa_*]$,
\begin{eqnarray*} &&{\mathcal{H}}^{n-1}
\big( \{ z\in \Omega_R\cap(B_{2^{k+1}}\setminus B_{2^k})
{\mbox{ s.t. }}
{\rm dist} (z,\partial\Omega_R)=t
\}\big) \\&&\qquad\le C\,
{\mathcal{H}}^{n-1}
\big( (\partial\Omega_R)\cap(B_{2^{k+2}}\setminus B_{2^{k-1}})
\}\big),\end{eqnarray*}
for any~$k\in\N$.
Furthermore, by Lemma~\ref{app0987uv},
$$
{\mathcal{H}}^{n-1}
\big( (\partial\Omega_R)\cap(B_{2^{k+2}}\setminus B_{2^{k-1}})
\}\big)\le C (2^{k-1})^{n-1}.$$
The latter two formulas give that
$$ {\mathcal{H}}^{n-1}
\big( \{ z\in \Omega_R\cap(B_{2^{k+1}}\setminus B_{2^k})
{\mbox{ s.t. }}
{\rm dist} (z,\partial\Omega_R)=t
\}\big) \le C (2^{k-1})^{n-1}.$$
Consequently, by Coarea Formula,
\begin{equation}\label{0ds4ergerg}
\begin{split}
& \int_{ {\R^n\setminus B_1}\atop{ \{ {\rm dist}\le\kappa_*\} }}
\frac{ \chi_{\Omega_R}(z) }{
{\rm dist}^{\alpha-s}(z,\partial\Omega_R)\,
|z|^{n+2s} }\,dz  \\&\qquad
\le\sum_{k\ge0}
\int_{ {B_{2^{k+1}}\setminus B_{2^k}}\atop{\{ {\rm dist}\le\kappa_*\}}}
\frac{\chi_{\Omega_R}(z)}{{\rm dist}^{\alpha-s}(z,\partial\Omega_R)\,
|z|^{n+2s}}\,dz \\ &\qquad\le
\sum_{k\ge0} \frac{1}{2^{k(n+2s)}}
\int_{{B_{2^{k+1}}\setminus B_{2^k}}\atop{\{ {\rm dist}\le\kappa_*\}}}
\frac{\chi_{\Omega_R}(z)\,|\nabla {\rm dist}(z,\partial\Omega_R)|
}{ {\rm dist}^{\alpha-s}(z,\partial\Omega_R)\,}\,dz
\\ &\qquad
\le\sum_{k\ge0} \frac{1}{2^{k(n+2s)}} \int_0^{\kappa_*}dt\,
\int_{ {B_{2^{k+1}}\setminus B_{2^k}}\atop{
\{ {\rm dist}(z,\partial\Omega_R)=t\}}}
d{\mathcal{H}}^{n-1}(z)\,
\frac{1}{t^{\alpha-s}}\\
&\qquad\le
\sum_{k\ge0} \int_0^{\kappa_*}dt\,\frac{
{\mathcal{H}}^{n-1}
\big( \{ z\in \Omega_R\cap(B_{2^{k+1}}\setminus B_{2^k})
{\mbox{ s.t. }}
{\rm dist} (z,\partial\Omega_R)=t
\}\big)
}{t^{\alpha-s}\,2^{k(n+2s)}}\\ &\qquad
\le
\sum_{k\ge0} \int_0^{\kappa_*}dt\,
\frac{C (2^{k-1})^{n-1}}{t^{\alpha-s}\,2^{k(n+2s)}}
\\&\qquad\le
\sum_{k\ge0} \int_0^{\kappa_*}dt\,
\frac{C}{t^{\alpha-s}\,2^{k(1+2s)}}
\\ &\qquad=\sum_{k\ge0}
\frac{C\kappa_*^{1-\alpha+s}}{2^{k(1+2s)}} \\&\qquad \le C,
\end{split}\end{equation}
up to renaming constants.
Additionally, we have that
$$ \int_{ {\R^n\setminus B_1}\atop{ \{ {\rm dist}\ge\kappa_*\} } }
\frac{\chi_{\Omega_R}(z)
}{{\rm dist}^{\alpha-s}(z,\partial\Omega_R)\,
|z|^{n+2s}}\,dz \le
\int_{\R^n\setminus B_1}
\frac{ \chi_{\Omega_R}(z) }{\kappa_*^{\alpha-s}
\,|z|^{n+2s}}\,dz
\le C.$$
This and~\eqref{0ds4ergerg}
complete the proof of~\eqref{7890fsdddss-1}
and thus of Lemma~\ref{AT2-simple:bis}.
The constant $C$ can be bounded by $C'/s$, with $C'$ independent of $s$.
\end{proof}

Then, the result corresponding to Lemma~\ref{AT2} goes as follows:

\begin{lem}\label{AT2:bis}
Let~$\alpha\in[s,1+s)$.
Let~$p$, $q\in \R^n$ and~$R>0$.
Let~$\Omega\subset\R^n$ be a
bounded domain with~$C^{1,1}$ boundary, with~$B_R(p)\cup B_R(q)
\subseteq\Omega$.
Then there exists~$C>0$, possibly depending on~$\alpha$, $n$, $s$
and~$\Omega$,
such that
\begin{equation*}
\int_{\R^n\setminus B_R} \frac{
\chi_\Omega(p+x)\,\chi_\Omega(q+x)
}{\dr^{\alpha-s}(p+x,q+x)\,
|x|^{n+2s}}\,dx \le C R^{-s-\alpha}.\end{equation*}
\end{lem}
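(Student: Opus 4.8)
\emph{Strategy.} The plan is to deduce this statement directly from Lemma~\ref{AT2-simple:bis}, exploiting the elementary fact that a negative power of a minimum of two nonnegative quantities is controlled by the sum of the corresponding negative powers. No new geometric input is needed: all the delicate work with the $C^{1,1}$ boundary layer has already been done in Lemma~\ref{AT2-simple:bis}.

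\emph{Steps.} First I would dispose of the trivial case $\alpha=s$: then $\dr^{\alpha-s}\equiv 1$ and the integrand is $\chi_\Omega(p+x)\,\chi_\Omega(q+x)\,|x|^{-n-2s}\le |x|^{-n-2s}$, so integrating over $\R^n\setminus B_R$ gives the bound $C R^{-2s}=C R^{-s-\alpha}$. Hence we may assume $\alpha>s$, so that $\gamma:=\alpha-s\in(0,1)$. For $a,b\ge 0$ one has $\min\{a,b\}^{-\gamma}=\max\{a^{-\gamma},b^{-\gamma}\}\le a^{-\gamma}+b^{-\gamma}$; applying this with $a=\dr(p+x)$, $b=\dr(q+x)$ (each evaluated only where the attached characteristic factor is nonzero, so that nothing is divided by zero), together with the bounds $\chi_\Omega(p+x)\,\chi_\Omega(q+x)\le\chi_\Omega(p+x)$ and $\chi_\Omega(p+x)\,\chi_\Omega(q+x)\le\chi_\Omega(q+x)$, yields the pointwise estimate
$$\frac{\chi_\Omega(p+x)\,\chi_\Omega(q+x)}{\dr^{\alpha-s}(p+x,q+x)\,|x|^{n+2s}}\le\frac{\chi_\Omega(p+x)}{\dr^{\alpha-s}(p+x)\,|x|^{n+2s}}+\frac{\chi_\Omega(q+x)}{\dr^{\alpha-s}(q+x)\,|x|^{n+2s}}.$$
Integrating over $\R^n\setminus B_R$ and applying Lemma~\ref{AT2-simple:bis} to the first term with the ball $B_R(p)\subseteq\Omega$ and to the second term with the ball $B_R(q)\subseteq\Omega$ (both inclusions being part of the hypotheses), each summand is bounded by $C R^{-s-\alpha}$, and the claim follows after renaming~$C$.

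\emph{Main obstacle.} Frankly, there is essentially no obstacle left at this stage: the genuinely delicate estimate -- controlling the $C^{1,1}$ boundary layer through the coarea formula and the Hausdorff-measure bounds of the Appendix -- is precisely the content of Lemma~\ref{AT2-simple:bis}, and here one only passes from a single base point to two. This mirrors, in a simpler way, the passage from Lemma~\ref{AT2} to Lemma~\ref{AT2-simple} in the convex case; the only minor point to verify is that discarding one of the two characteristic factors is harmless, since in each resulting term the surviving characteristic function is exactly the one matching its own distance weight, so that no quantity is ever evaluated where it is ill-behaved.
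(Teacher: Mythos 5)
Your proof is correct and takes essentially the same route as the paper: bound $\dr^{\alpha-s}(p+x,q+x)^{-1}$ by the sum $\dr^{\alpha-s}(p+x)^{-1}+\dr^{\alpha-s}(q+x)^{-1}$, drop one characteristic factor in each term, and invoke Lemma~\ref{AT2-simple:bis} twice. The separate treatment of $\alpha=s$ is unnecessary (the same pointwise inequality holds trivially when $\gamma=0$), but this is a harmless extra step, not a flaw.
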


\begin{proof} Notice that $\dr(p+x,q+x)$ coincides
with either~$\dd(p+x)$ or~$\dd(q+x)$, therefore
$$ \frac{1}{\dr^{\alpha-s}(p+x,q+x)}
\le \frac{1}{\dr^{\alpha-s}(p+x)}+\frac{1}{\dr^{\alpha-s}(q+x)}.$$
Furthermore,~$\chi_\Omega(p+x)\,\chi_\Omega(q+x)\le
\chi_\Omega(p+x)$ and~$\chi_\Omega(p+x)\,\chi_\Omega(q+x)\le
\chi_\Omega(q+x)$. As a consequence
$$
\frac{
\chi_\Omega(p+x)\,\chi_\Omega(q+x)
}{\dr^{\alpha-s}(p+x,q+x)\,
|x|^{n+2s}}\le
\frac{
\chi_\Omega(p+x)
}{\dr^{\alpha-s}(p+x)\,
|x|^{n+2s}}+
\frac{
\chi_\Omega(q+x)
}{\dr^{\alpha-s}(q+x)\,
|x|^{n+2s}},$$
and so Lemma~\ref{AT2:bis} follows from
Lemma~\ref{AT2-simple:bis}.
In particular, the constant $C$ can be bounded by $C'/s$, with $C'$ independent of $s$.
\end{proof}

Finally, here is the counterpart of Lemma~\ref{dsfvgddseerw}:

\begin{lem}\label{dsfvgddseerw:bis}
Let~$R>0$ and~$p\in B_R$.
Let~$\Omega\subset\R^n$ be a
bounded domain with~$C^{1,1}$ boundary,
with~$
B_{3R}\cap (\partial\Omega)\ne\varnothing$.
Then there exists~$C>0$, possibly depending on~$n$, $s$ and~$\Omega$,
such that
\begin{equation*}
\int_{\R^n\setminus B_R}
\frac{
\dr^{s}(p+x)
}{|x|^{n+2s}}\,dx \le C R^{-s}.\end{equation*}
\end{lem}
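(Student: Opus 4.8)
The statement to prove is Lemma~\ref{dsfvgddseerw:bis}, which is the $C^{1,1}$-domain (and bounded-measure) analogue of Lemma~\ref{dsfvgddseerw}. The key observation is that the proof of Lemma~\ref{dsfvgddseerw} never actually used convexity: it only used the hypothesis $B_{3R}\cap(\partial\Omega)\neq\varnothing$ to produce a boundary point $q_o$ close to $B_R$, and then controlled $\dr(p+x)$ crudely from \emph{above} by the distance to that single boundary point. Since the present lemma has exactly the same hypothesis $B_{3R}\cap(\partial\Omega)\neq\varnothing$ and the same weight (no extra factor of $\chi_\Omega$ is needed because $\dr^s$ is small precisely where it matters), the argument transports verbatim once we switch from the line-integral $\int_R^{+\infty}d\rho/\rho^{1+2s}$ to the $\R^n$-integral $\int_{\R^n\setminus B_R}dx/|x|^{n+2s}$.

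\begin{proof}
Let~$q_o\in B_{3R}\cap(\partial\Omega)$, which exists by hypothesis. Since~$p\in B_R$, for any~$x\in\R^n$ we have~$p+x\in\R^n$ and, using that the distance function to~$\partial\Omega$ is bounded above by the distance to the particular point~$q_o\in\partial\Omega$,
$$ \dr(p+x)\le |p+x-q_o|\le |p|+|q_o|+|x|\le R+3R+|x|= 4R+|x|.$$
In particular, for~$x\in\R^n\setminus B_R$ we have~$R<|x|$, hence~$\dr(p+x)\le 5|x|$ and so~$\dr^s(p+x)\le 5^s|x|^s\le C|x|^s$, with~$C$ depending only on~$s$. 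Therefore, using the substitution~$x=Rz$ (so that~$dx=R^n\,dz$ and~$|x|=R|z|$), we obtain
\begin{equation*}
\int_{\R^n\setminus B_R}
\frac{\dr^{s}(p+x)}{|x|^{n+2s}}\,dx \le C
\int_{\R^n\setminus B_R}
\frac{|x|^{s}}{|x|^{n+2s}}\,dx
= C\int_{\R^n\setminus B_R}
\frac{dx}{|x|^{n+s}}
= CR^{-s}\int_{\R^n\setminus B_1}
\frac{dz}{|z|^{n+s}}.
\end{equation*}
Since~$n+s>n$, the last integral converges and equals a finite constant depending only on~$n$ and~$s$ (namely~${\mathcal H}^{n-1}(S^{n-1})/s$). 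This yields the desired bound~$CR^{-s}$ and completes the proof.
\end{proof}

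There is, in fact, no genuine obstacle here: unlike Lemmata~\ref{AT1:bis} and~\ref{AT2-simple:bis}, whose proofs genuinely required the $C^{1,1}$ regularity of~$\partial\Omega$ (through the measure-theoretic Proposition~\ref{appa} and Corollary~\ref{CR} controlling the size of tubular neighbourhoods of the boundary) to replace the convexity/trigonometry argument, the present estimate is purely a size estimate that follows from the triangle inequality alone. The only thing worth remarking is that, as in Lemma~\ref{dsfvgddseerw}, one does \emph{not} need the cutoff~$\chi_\Omega(p+x)$ on the left-hand side: the bound~$\dr^s(p+x)\le C|x|^s$ holds for \emph{all}~$x$, and the convergence at infinity is guaranteed by the restriction~$|x|>R$ together with the exponent~$n+2s>n+s$. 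One could keep~$\chi_\Omega$ in the statement for uniformity with the other lemmata of this section, but it plays no role in the estimate; I have chosen to follow the statement as given.
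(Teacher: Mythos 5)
Your proof is correct and is essentially the same as the paper's: both pick $q_o\in B_{3R}\cap\partial\Omega$, use the triangle inequality to get $\dd(p+x)\le |x|+4R$, and then conclude by the scaling $x=Rz$; the only cosmetic difference is that you absorb $4R+|x|\le 5|x|$ on $\R^n\setminus B_R$ before integrating, whereas the paper carries $(|x|+4R)^s$ through the change of variables.
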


\begin{proof} Let~$q_o\in B_{3R}\cap (\partial\Omega)$. Then
$$ \dd(p+x)\le |p+x-q_o|\le |p|+|x|+|q_o|\le |x|+4R,$$
therefore
\begin{equation*}
\int_{\R^n\setminus B_R}
\frac{
\dr^{s}(p+x)
}{|x|^{n+2s}}\,dx \le
\int_{\R^n\setminus B_R}
\frac{
(|x|+4R)^s
}{|x|^{n+2s}}\,dx = R^{-s}
\int_{\R^n\setminus B_1}
\frac{
(|y|+4)^s
}{|y|^{n+2s}}\,dy=CR^{-s}.
\end{equation*}
Notice that the constant $C$ can be bounded by $C'/s$, with $C'$ independent of $s$.
\end{proof}

\section{A localization argument}\label{US}

In this section we introduce an appropriate cutoff function
and we discuss its regularity properties. The goal of the cutoff
procedure is, roughly speaking, to distinguish the behavior
of the solutions inside the domain from the one at the boundary.
For this we recall the notation in~\eqref{dxy} and we give the following result:

\begin{lem}\label{WW} Let~$R>0$, $\Omega\subset\R^n$ and
$\alpha\in(0,1+s)$.
Assume that either~\eqref{HYP1} or~\eqref{HYP2} is satisfied
and that
\begin{equation}\label{78fg3992}
{\mbox{$B_{2R}\subseteq\Omega$ and
$B_{3R}\cap (\partial\Omega)\ne\varnothing$.}}\end{equation}
Let~$w\in C^s(\R^n)$,
with
\begin{eqnarray}
&& \label{ze} {\mbox{$w\equiv0$ in~$B_R$}}\\
{\mbox{and }}&& \label{ze2} {\mbox{$w\equiv0$ outside~$\Omega$.}}
\end{eqnarray}
Then
\begin{equation}\label{W1}
\|Lw\|_{L^\infty(B_{R/2})} \le C\,
\,[w]_{C^s(\R^n)} R^{-s}.
\end{equation}
In addition, if we assume also that
\begin{itemize}
\item either~$\alpha\in(0,s]$,
\item or~$\alpha\in(s,1+s)$ and
\begin{itemize}
\item for any~$p$, $q\in\Omega$ with~$|p-q|\le R$ we have that
\begin{equation}\label{L01L}
\left\{\begin{matrix}
|w(p)-w(q)|\le \ell\,|p-q|^\alpha \, \big( \dd^{s-\alpha}(p,q)
+R^{-\alpha}\dd^{s}(p,q)\big) & {\mbox{ if }} \alpha\in(s,1],\\
&\\
\begin{matrix}
|\nabla w(p)|\le\ell\,
\big( \dd^{s-1}(p)
+R^{-1}\dd^{s}(p)\big) \qquad{\mbox{and }}
\\
|\nabla w(p)-\nabla w(q)|\le \ell\,|p-q|^{\alpha-1} \, \big( \dd^{s-\alpha}(p,q)
+R^{-\alpha}\dd^{s}(p,q)\big) \end{matrix}
& {\mbox{ if }} \alpha\in(1,1+s),
\end{matrix}\right.\end{equation}
for some~$\ell>0$,\end{itemize}\end{itemize}
then
\begin{equation}\label{wanted}
R^{\alpha+s}[Lw]_{C^\alpha(B_{R/4})}\leq
\left\{\begin{matrix}
C\,[w]_{C^s(\R^n)}
& {\mbox{ if }} \alpha\in(0,s],
\\
C\,\Big(
[w]_{C^s(\R^n)}+
\ell \Big) & {\mbox{ if }} \alpha\in(s,1+s).
\end{matrix}
\right.
\end{equation}
Finally, if~$\alpha\in(1,1+s)$, we also have that
\begin{equation}\label{Xnuova}
R^{1+s}\|\nabla Lw\|_{L^\infty(B_{R/4})}\le C\ell.
\end{equation}
\end{lem}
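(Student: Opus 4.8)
The plan is to exploit that $w$ vanishes on $B_R$ (hence on $B_{R/2}\supseteq B_{R/4}$): for every $x$ in these balls one has $w(x)=0$, so, under~\eqref{HYP1},
$$Lw(x)=-\int_0^{+\infty}\!d\rho\int_{S^{n-1}}\!da(\omega)\,\frac{w(x+\rho\omega)+w(x-\rho\omega)}{\rho^{1+2s}},$$
and the integrand vanishes whenever $\rho\le 3R/4$, since then $|x\pm\rho\omega|\le R$ and $w\equiv0$ on $\overline{B_R}$ by continuity. Under~\eqref{HYP2} the measure $a$ is bounded, so $L$ can be written in the $\R^n$-integral form~\eqref{L2}, and the whole argument below carries over verbatim with $\rho$ read as $|y|$ and the line-integral estimates of Section~\ref{S:2a} (which require convexity) replaced by their $C^{1,1}$-domain counterparts in Section~\ref{S:2b}. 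Estimate~\eqref{W1} is then immediate: bounding $|w(x\pm\rho\omega)|=|w(x\pm\rho\omega)-w(x)|\le[w]_{C^s(\R^n)}\rho^s$ and using $\int_{S^{n-1}}da\le\Lambda$,
$$\|Lw\|_{L^\infty(B_{R/2})}\le C\Lambda\,[w]_{C^s(\R^n)}\int_{3R/4}^{+\infty}\rho^{-1-s}\,d\rho=C\,[w]_{C^s(\R^n)}\,R^{-s}.$$

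For~\eqref{wanted} I would fix $x,y\in B_{R/4}$, set $r=|x-y|\le R/2$, and subtract the two representations, so that $Lw(x)-Lw(y)$ is minus the integral over $\{\rho>3R/4\}\times S^{n-1}$ of $\rho^{-1-2s}$ times the four increments $w(x\pm\rho\omega)-w(y\pm\rho\omega)$, against $d\rho\,da(\omega)$. If $\alpha\in(0,s]$, then $\rho>3R/4>r$ throughout, so each increment is $\le[w]_{C^s(\R^n)}r^s$, whence $|Lw(x)-Lw(y)|\le C[w]_{C^s(\R^n)}r^sR^{-2s}$; dividing by $r^\alpha$ and using $r^{s-\alpha}\le(R/2)^{s-\alpha}$ (as $s-\alpha\ge0$) gives~\eqref{wanted}. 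For $\alpha\in(s,1+s)$, write $\alpha'=\alpha$ if $\alpha\le1$ and $\alpha'=\alpha-1$ if $\alpha>1$ (in the latter case one works with $\nabla w$ in place of $w$, which is licit once $\nabla Lw=L(\nabla w)$ on $B_{R/4}$ is justified, see below), and split each increment according to how many of $x+\rho\omega,\,y+\rho\omega$ lie in $\Omega$. If neither does, the increment vanishes by~\eqref{ze2}. If both do, then $|(x+\rho\omega)-(y+\rho\omega)|=r\le R$, so~\eqref{L01L} bounds the increment by $\ell\,r^{\alpha'}\big(\dd^{s-\alpha}(x+\rho\omega,y+\rho\omega)+R^{-\alpha}\dd^{s}(x+\rho\omega,y+\rho\omega)\big)$; integrating in $\rho$, the first term is controlled by Lemma~\ref{AT2} (using $\alpha-s<1$ together with $B_{R/4}(x)\cup B_{R/4}(y)\subseteq B_{2R}\subseteq\Omega$), and the second, after $\dd^{s}(x+\rho\omega,y+\rho\omega)\le\dd^{s}(x+\rho\omega)$ and pulling out $R^{-\alpha}$, by Lemma~\ref{dsfvgddseerw} (using~\eqref{78fg3992}), so this region contributes $\le C\ell\,r^{\alpha'}R^{-s-\alpha}$.

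There remains the boundary-crossing case, say $x+\rho\omega\in\Omega$, $y+\rho\omega\notin\Omega$: the increment equals $w(x+\rho\omega)$ (resp.\ $\nabla w(x+\rho\omega)$), and, the segment joining $x+\rho\omega$ to $y+\rho\omega$ meeting $\partial\Omega$ within distance $r$, one has $\dd(x+\rho\omega)\le r$. For $\alpha\le1$ the plain $C^s$ bound at the nearest boundary point gives $|w(x+\rho\omega)|\le[w]_{C^s(\R^n)}\dd^{s}(x+\rho\omega)\le[w]_{C^s(\R^n)}\,r^{\alpha}\,\dd^{s-\alpha}(x+\rho\omega)$, via $\dd^{s}=\dd^{\alpha}\dd^{s-\alpha}$ and $\dd(x+\rho\omega)\le r$; for $\alpha>1$, the first bound in~\eqref{L01L} combined with $\dd^{s-1}\le r^{\alpha-1}\dd^{s-\alpha}$ and $R^{-1}\dd^{s}\le r^{\alpha-1}\dd^{s-\alpha}$ (again by $\dd(x+\rho\omega)\le r\le R$) gives $|\nabla w(x+\rho\omega)|\le C\ell\,r^{\alpha-1}\,\dd^{s-\alpha}(x+\rho\omega)$. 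In either case, integrating $\chi_\Omega(x+\rho\omega)\dd^{s-\alpha}(x+\rho\omega)$ against $\rho^{-1-2s}d\rho$ is controlled by Lemma~\ref{AT2-simple} (which uses $0\le\alpha-s<1$), so this region contributes $\le C\,([w]_{C^s(\R^n)}+\ell)\,r^{\alpha'}R^{-s-\alpha}$ as well. Summing over the four increments and dividing by $r^{\alpha'}$ proves~\eqref{wanted}. Finally, for~\eqref{Xnuova} one first checks $\nabla Lw=L(\nabla w)$ on $B_{R/4}$ by differentiation under the integral sign — legitimate because $\nabla w$ is continuous in $\Omega$ by the second bound in~\eqref{L01L}, and $|\nabla w(x\pm\rho\omega)|\le\ell\,\chi_\Omega(x\pm\rho\omega)\big(\dd^{s-1}(x\pm\rho\omega)+R^{-1}\dd^{s}(x\pm\rho\omega)\big)$ is integrable against $\rho^{-1-2s}d\rho\,da(\omega)$ uniformly for $x$ near $B_{R/4}$ — and then bounds $|\nabla Lw(x)|$ directly, using Lemma~\ref{AT2-simple} (with $1-s$ in place of $\alpha-s$) for the $\dd^{s-1}$ term and Lemma~\ref{dsfvgddseerw} for the $R^{-1}\dd^{s}$ term, to get $R^{1+s}\|\nabla Lw\|_{L^\infty(B_{R/4})}\le C\ell$.

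The main obstacle will be the case $\alpha\in(s,1+s)$: one must split the integration precisely along $\partial\Omega$ — since~\eqref{L01L} is useless as soon as a boundary point enters, forcing the fallback to the plain $C^s$ bound (for $\alpha\le1$) or to the pointwise gradient bound (for $\alpha>1$) in the boundary-crossing region — and then verify that every product of powers of $r$ and $R$ that appears collapses to $r^{\alpha'}R^{-s-\alpha}$; this rests on $r\le R/2$ and on the positivity of the exponents involved (such as $\alpha-s$, $\alpha-1$, $1-(\alpha-s)$, $1+s-\alpha$). A minor technical point is that the lemmas of Sections~\ref{S:2a}--\ref{S:2b} are written with lower integration endpoint equal to their radius, whereas here it is $3R/4$; this is harmless, since those integrands are nonnegative and $B_{R/4}(x)\subseteq B_{2R}\subseteq\Omega$ permits applying them with a radius $\le 3R/4$ (for Lemma~\ref{dsfvgddseerw} one disposes of the leftover range $\rho\in[3R/4,R]$ by hand, bounding $\dd(x+\rho\omega)\le CR$ there).
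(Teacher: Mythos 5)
Your proof is correct, but the decomposition is genuinely different from the paper's, and arguably cleaner. For~$\alpha\in(s,1+s)$, the paper splits the $\rho$-integral according to whether $\dd_\star(\rho):=\dd(x_1+\rho\omega)+\dd(x_2+\rho\omega)$ is $\le 6r$ or $>6r$ (near vs.\ far from $\partial\Omega$). In the near region~$I_1$ it bounds each $|w(x_i+\rho\omega)|$ by $C[w]_{C^s}r^s\chi_\Omega$ and invokes Lemma~\ref{AT1} (the indicator bound $\int\chi_\Omega\chi_{[0,6r]}(\dd)\rho^{-1-2s}\le CrR^{-1-2s}$); in the far region~$I_2$ it applies~\eqref{L01L}, then uses a geometric reduction ($\dd(p,q)\ge\tfrac{5}{12}\dd(p)$ when $\dd_\star>6r$) to trade the two-point distance for a single-point one and invoke Lemma~\ref{AT2-simple}. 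You instead split according to $\Omega$-membership of $x+\rho\omega$ and $y+\rho\omega$: when both lie in $\Omega$ you apply~\eqref{L01L} and integrate the two-point weight directly with Lemma~\ref{AT2} (which was written precisely for that) together with Lemma~\ref{dsfvgddseerw}; in the boundary-crossing case the increment reduces to a single value of $w$ (resp.\ $\nabla w$), and the observation $\dd(x+\rho\omega)\le r$ converts the $C^s$ bound (resp.\ the pointwise gradient bound in~\eqref{L01L}) into $r^{\alpha'}\dd^{s-\alpha}$, handled by Lemma~\ref{AT2-simple}. This avoids Lemma~\ref{AT1} and the $\tfrac{5}{12}$ reduction entirely, and treats $\alpha\le 1$ and $\alpha>1$ by a single pattern; in fact for $\alpha>1$ the paper's own sub-split of $J_1$ into ``both in / one in'' already coincides with yours, so you are in effect extending that sub-split to the whole integral. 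Every application of Lemmas~\ref{AT2}, \ref{AT2-simple}, \ref{dsfvgddseerw} that you make is licit: the required ball containments follow from $B_{2R}\subseteq\Omega$ with $x,y\in B_{R/4}$, and your handling of the leftover range $\rho\in[3R/4,R]$ for Lemma~\ref{dsfvgddseerw} is fine. One small inaccuracy: for~\eqref{W1} the center $x$ ranges over $B_{R/2}$, not $B_{R/4}$, so the integrand is only guaranteed to vanish for $\rho<R/2$, not for $\rho\le 3R/4$; the lower limit in your $\int$ should be $R/2$, which changes nothing in the final $R^{-s}$ bound.
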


\begin{proof}
For simplicity,
we state and prove
this result for convex open sets, i.e. when~\eqref{HYP1}
is assumed.
The proof under condition~\eqref{HYP2} would be the same,
except that one should use the results of
Section~\ref{S:2b} instead of the ones of Section~\ref{S:2a}.
More explicitly, for convex open sets, in the proof of this result
we will quote
Lemmata~\ref{AT1}, \ref{AT2-simple}, \ref{dsfvgddseerw}
and~\ref{AT2}: for bounded domains with~$C^{1,1}$
boundary one has instead to quote
Lemmata~\ref{AT1:bis}, \ref{AT2-simple:bis},
\ref{dsfvgddseerw:bis} and~\ref{AT2:bis}.

First of all, we prove~\eqref{W1}. Fix~$x\in B_{R/2}$.
Then~$w(x)=0$ and~$w(x+\rho \omega)=0$ for any~$\rho\in(-R/2,R/2)$,
thanks to~\eqref{ze}. Accordingly, for all $\rho>0$ we have
$$ |w(x+\rho \omega)|=|w(x+\rho \omega)-w(x)|\le [w]_{C^s(\R^n)} \rho^s$$
therefore
$$ Lw(x)\le 2\,[w]_{C^s(\R^n)}
\int_{R/2}^{+\infty}\,d\rho
\int_{S^{n-1}}\,da(\omega)\,
\frac{1}{\rho^{1+s}}\le C\,[w]_{C^s(\R^n)} R^{-s},$$
which proves~\eqref{W1}.

Now we prove~\eqref{wanted}.
For this, we first consider the case~$\alpha\in(0,1]$,
and we fix~$x_1$ and $x_2$ in $B_{R/4}$.
Notice that if~$y\in B_{R/2}$ then~$w(x_1+y)=w(x_2+y)=0$, thanks to~\eqref{ze}.
In particular, we have $w(x_1)=w(x_2)=0$.
As a consequence of these observations,
\begin{equation*}
\begin{split}
Lw(x_i)\,&=-\int_{R/2}^{+\infty}\,d\rho
\int_{S^{n-1}}\,da(\omega)\,
\frac{
w(x_i+\rho\omega)+w(x_i-\rho\omega)}{\rho^{1+2s}}\\&=
-2\int_{R/2}^{+\infty}\,d\rho
\int_{S^{n-1}}\,da(\omega)\,
\frac{
w(x_i+\rho\omega)}{\rho^{1+2s}},
\end{split}\end{equation*}
for $i\in\{1,2\}$ (and possibly replacing~$da(\omega)$
with~$da(\omega)+da(-\omega)$). Therefore
\begin{equation}\label{general}
|Lw(x_1)-Lw(x_2)|\leq
2\int_{R/2}^{+\infty}\,d\rho
\int_{S^{n-1}}\,da(\omega)\,
\frac{
|w(x_1+\rho\omega) - w(x_2+\rho\omega)|}{\rho^{1+2s}}
.\end{equation}
So, we distinguish two cases.
If~$\alpha\in(0,s]$, then we obtain from~\eqref{general} that
\[\begin{split}
|Lw(x_1)-Lw(x_2)|& \leq 2 [w]_{C^s(\R^n)} |x_1-x_2|^s
\int_{R/2}^{+\infty}\,d\rho
\int_{S^{n-1}}\,da(\omega)\,
\frac{1}{\rho^{1+2s}}\\
&\leq C [w]_{C^s(\R^n)} |x_1-x_2|^s R^{-2s}.
\end{split}\]
Therefore
$$\frac{ |Lw(x_1)-Lw(x_2)|}{|x_1-x_2|^\alpha}
\le C [w]_{C^s(\R^n)} |x_1-x_2|^{s-\alpha} R^{-2s}.$$
So, if~$\alpha\in(0,s]$, the result in~\eqref{wanted}
follows by noticing that~$|x_1-x_2|\le |x_1|+|x_2|\le R$.

Now suppose that~$\alpha\in(s,1]$.
We define~$\dd_\star(\rho)=\dr(x_1+\rho\omega)+\dr(x_2+\rho\omega)$
and we write~\eqref{general} as
\begin{equation}\label{LX}
|Lw(x_1)-Lw(x_2)| \leq
 I_1+I_2,\end{equation}
where $r=|x_1-x_2|$,
\begin{eqnarray*}
&& I_1=2\int_{R/2}^{+\infty}\,d\rho
\int_{S^{n-1}}\,da(\omega)\,\chi_{[0,6r]} \big(\dd_\star(\rho)\big)\,
\frac{
|w(x_1+\rho\omega) - w(x_2+\rho\omega)|}{\rho^{1+2s}}\\
{\mbox{and }}&& I_2=
2\int_{R/2}^{+\infty}\,d\rho
\int_{S^{n-1}}\,da(\omega)\,\chi_{(6r,+\infty)} \big(\dd_\star(\rho)
\big)\,
\frac{
|w(x_1+\rho\omega) - w(x_2+\rho\omega)|}{\rho^{1+2s}}.
\end{eqnarray*}
Now we estimate~$I_1$. For this, we fix~$\rho\ge0$
such that~$\dd_\star(\rho)\in[0,6r]$.
Thus,
for any~$i\in\{1,2\}$, we have that~$\dr(x_i+\rho\omega)
\le \dd_\star(\rho)\le 6r$, thus,
by~\eqref{ze2},
\begin{eqnarray*}
|w(x_i+\rho\omega)|&=&
|w(x_i+\rho\omega)|\,\chi_\Omega(x_i+\rho\omega) \\
&\le& [w]_{C^s(\R^n)}\,\dr^s(x_i+\rho\omega)\,\chi_\Omega(x_i+\rho\omega)
\\ &\le & C\, [w]_{C^s(\R^n)}\,r^s \,\chi_\Omega(x_i+\rho\omega).\end{eqnarray*}
As a consequence
\begin{equation}\label{estimate I1}
I_1\le C \,[w]_{C^s(\R^n)}\,r^s\,\sum_{i=1}^2
\int_{R/2}^{+\infty}\,d\rho
\int_{S^{n-1}}\,da(\omega)\, \frac{
\chi_\Omega(x_i+\rho\omega)\chi_{[0,6r]} \big( \dr(x_i+\rho\omega)\big)
}{\rho^{1+2s}}
.\end{equation}
This and Lemma~\ref{AT1} give that
\begin{equation}\label{LY}
I_1\le C \,[w]_{C^s(\R^n)}\,r^{1+s} R^{-1-2s},
\end{equation}
for some~$C>0$.

Now we estimate~$I_2$.
For this we let~$\dd_\star(\rho)>6r$ and
we will show that
\begin{equation}\label{alt}
\begin{split} &|w(x_1+\rho\omega) - w(x_2+\rho\omega)|\\&\qquad\le
Cr^\alpha
\ell \big( \dr^{s-\alpha}(x_1+\rho\omega)+R^{-\alpha}\dr^{s}(x_1+\rho\omega)
\big).\end{split}
\end{equation}
To prove this, we observe that
$$ \dr(x_1+\rho\omega)\le \dr(x_2+\rho\omega)+|x_1-x_2|
=\dr(x_2+\rho\omega)+r.$$
Therefore
$$ \dd_\star(\rho)=\dr(x_1+\rho\omega)+\dr(x_2+\rho\omega)\le
2\dr(x_2+\rho\omega)+r.$$
Thus, if~$\dd_\star(\rho)> 6r$ we have that
$$ \frac{5}{12} \dd(x_1+\rho\omega)\le
\frac{5}{12} \dd_\star(\rho)<
\frac{\dd_\star(\rho)-r}{2}\le \dr(x_2+\rho\omega).$$
In particular
$$ \dd(x_1+\rho\omega,\,x_2+\rho\omega)=\min\{ \dd(x_1+\rho\omega),\,\dd(x_2+\rho\omega)\}\ge
\frac{5}{12} \dd(x_1+\rho\omega).$$
Also, of course $\dd(x_1+\rho\omega,\,x_2+\rho\omega)\le \dd(x_1+\rho\omega)$.
As a consequence of these observations,
we can exploit~\eqref{L01L} with~$p=x_1+\rho\omega$
and~$q=x_2+\rho\omega$,
and we obtain
$$ |w(x_1+\rho\omega) - w(x_2+\rho\omega)|\le
\ell\,|x_1-x_2|^\alpha \left(
\left(\frac{5}{12} \dd(x_1+\rho\omega)\right)^{s-\alpha}+R^{-\alpha}\dd^s(x_1+\rho\omega)\right),$$
which implies~\eqref{alt}.

Having completed the proof of~\eqref{alt}, we can use such formula
to obtain that
\begin{eqnarray*} I_2 &\le& C\ell r^\alpha
\int_{R/2}^{+\infty}\,d\rho
\int_{S^{n-1}}\,da(\omega)\,\chi_{(r,+\infty)} \big(\dr(x_1+\rho\omega)
\big)\,
\frac{\dr^{s-\alpha}(x_1+\rho\omega)}{\rho^{1+2s}}\\
&&\qquad+C\ell r^\alpha R^{-\alpha}
\int_{R/2}^{+\infty}\,d\rho
\int_{S^{n-1}}\,da(\omega)\,\chi_{(r,+\infty)} \big(\dr(x_1+\rho\omega)
\big)\,
\frac{\dr^{s}(x_1+\rho\omega)}{\rho^{1+2s}}.\end{eqnarray*}
So we use Lemma~\ref{AT2-simple}
(resp., Lemma~\ref{dsfvgddseerw})
to bound the first (resp., the second) of
the two integrals above: we obtain
\begin{equation}\label{alt-fin}
I_2\le C\ell r^\alpha R^{-s-\alpha} .\end{equation}
This, \eqref{LX} and~\eqref{LY} give that
$$ |Lw(x_1)-Lw(x_2)|\le Cr^\alpha \big(
[w]_{C^s(\R^n)}\,r^{1+s-\alpha} R^{-1-2s}+
\ell R^{-s-\alpha} \big).$$
So, since $1+s-\alpha>0$,
we have that~$r^{1+s-\alpha}\le R^{1+s-\alpha}$ and so
$$ |Lw(x_1)-Lw(x_2)|\le C|x_1-x_2|^\alpha \big(
[w]_{C^s(\R^n)}\,R^{-s-\alpha}+
\ell R^{-s-\alpha}\big),$$
which establishes~\eqref{wanted} when~$\alpha\in(s,1]$.

It remains now to consider the case in which~$\alpha\in(1,1+s)$. For this scope,
we modify the argument above by looking at~$L\partial_j w(x_1)-L\partial_j w(x_2)$,
for a fixed~$j\in\{1,\cdots,n\}$.
In this case, formula~\eqref{general} becomes
\begin{eqnarray*} |L \partial_j w(x_1)-L\partial_j w(x_2)|&\leq&
2\int_{R/2}^{+\infty}\,d\rho
\int_{S^{n-1}}\,da(\omega)\,
\frac{
|\partial_j w(x_1+\rho\omega) - \partial_j w(x_2+\rho\omega)|}{\rho^{1+2s}}
\\&\le& J_1+J_2,\end{eqnarray*}
where $r=|x_1-x_2|$,
\begin{eqnarray*}
&& J_1=2\int_{R/2}^{+\infty}\,d\rho
\int_{S^{n-1}}\,da(\omega)\,\chi_{[0,6r]} \big(\dd_\star(\rho)\big)\,
\frac{
|\partial_jw(x_1+\rho\omega) - \partial_jw(x_2+\rho\omega)|}{\rho^{1+2s}}\\
{\mbox{and }}&& J_2=
2\int_{R/2}^{+\infty}\,d\rho
\int_{S^{n-1}}\,da(\omega)\,\chi_{(6r,+\infty)} \big(\dd_\star(\rho)
\big)\,
\frac{
|\partial_jw(x_1+\rho\omega) - \partial_jw(x_2+\rho\omega)|}{\rho^{1+2s}}.
\end{eqnarray*}
First we estimate~$J_1$.
We fix~$\rho\ge0$
such that~$\dd_\star(\rho)\in[0,6r]$ and,
for any~$i\in\{1,2\}$, we obtain that~$\dr(x_i+\rho\omega)
\le \dd_\star(\rho)\le 6r$. Hence, when both~$x_1+\rho\omega$ and~$x_2+\rho\omega$
belong to~$\Omega$
we deduce from~\eqref{L01L} that
\begin{eqnarray*}
&&|\partial_i w(x_1+\rho\omega)-\partial_i w(x_2+\rho\omega)| \\&
\le& \ell\,|x_1-x_2|^{\alpha-1} \, \big( \dd^{s-\alpha}(x_1+\rho\omega,x_2+\rho\omega)
+R^{-\alpha}\dd^{s}(x_1+\rho\omega,x_2+\rho\omega)\big) \\
&\le& C\ell\,r^{\alpha-1} \,\dd^{s-\alpha}(x_1+\rho\omega,x_2+\rho\omega).
\end{eqnarray*}
This estimate and Lemma~\ref{AT2} imply that
\begin{equation}\label{IS1}
\begin{split}
&\int_{R/2}^{+\infty}\,d\rho
\int_{S^{n-1}}\,da(\omega)\,\chi_{[0,6r]} \big(\dd_\star(\rho)\big)\,
\frac{\chi_\Omega(x_1+\rho\omega)\,\chi_\Omega(x_2+\rho\omega)\,
|\partial_jw(x_1+\rho\omega) - \partial_jw(x_2+\rho\omega)|}{\rho^{1+2s}}
\\ &\quad\le C\ell\,r^{\alpha-1} R^{-s-\alpha}.
\end{split}\end{equation}
If instead~$x_1+\rho\omega\in\Omega$ and~$x_2+\rho\omega\not\in\Omega$,
up to a set of measure zero we have that~$\partial_jw(x_2+\rho\omega)=0$
and so, by~\eqref{L01L},
\begin{equation}\label{SI}\begin{split}
&|\partial_i w(x_1+\rho\omega)-\partial_i w(x_2+\rho\omega)|=
|\partial_i w(x_1+\rho\omega)|\\
&\qquad\le
\ell\,
\big( \dd^{s-1}(x_1+\rho\omega)
+R^{-1}\dd^{s}(x_1+\rho\omega)\big)\le
C\ell\,\dd^{s-1}(x_1+\rho\omega)\\
&\qquad\le
C\ell\,r^{\alpha-1}\,\dd^{s-\alpha}(x_1+\rho\omega)
\end{split}\end{equation}
Notice that in the last two steps we have used the fact that~$\dd(x_1+\rho\omega)
\le \dd_\star(\rho)\le 6r\le 6R$ (together with~$\alpha\ge1$).
Formula~\eqref{SI} and Lemma~\ref{AT2-simple}
imply that
\begin{equation}\label{IS2}
\begin{split}
&\int_{R/2}^{+\infty}\,d\rho
\int_{S^{n-1}}\,da(\omega)\,\chi_{[0,6r]} \big(\dd_\star(\rho)\big)\,
\frac{\chi_\Omega(x_1+\rho\omega)\,\chi_{\R^n\setminus\Omega}(x_2+\rho\omega)\,
|\partial_jw(x_1+\rho\omega) - \partial_jw(x_2+\rho\omega)|}{\rho^{1+2s}}
\\ &\quad\le C\ell\,r^{\alpha-1} R^{-s-\alpha}.
\end{split}\end{equation}
A similar estimate also holds by exchanging~$x_1$ and~$x_2$.
Then,
since also~$|\partial_i w(x_1+\rho\omega)-\partial_i w(x_2+\rho\omega)|=0$
if both~$x_1+\rho\omega$ and~$x_2+\rho\omega$
lie outside~$\Omega$, up to sets of null measure,
we obtain from~\eqref{IS1} and~\eqref{IS2} that
$$ J_1 \le C \ell\,r^{\alpha-1} R^{-s-\alpha}.$$
Now we also bound~$J_2$ by~$C\ell r^{\alpha-1} R^{-s-\alpha}$.
This can be obtained by repeating
the argument from~\eqref{alt} to~\eqref{alt-fin},
replacing~$w$ by~$\partial_iw$ (and~$|x_1-x_2|^\alpha$ by~$|x_1-x_2|^{\alpha-1}$).
The estimates obtained on~$J_1$ and~$J_2$
prove that
\begin{equation*}
|L \partial_j w(x_1)-L\partial_j w(x_2)|\le C\ell\,r^{\alpha-1} R^{-s-\alpha}
,\end{equation*}
up to renaming~$C$, and so
\begin{equation}\label{L01L67890ojh1}
R^{\alpha+s}[L\partial_j w]_{C^{\alpha-1}(B_{R/4})}\leq C\ell.
\end{equation}
Now we observe that
\begin{equation}\label{L01L67890ojh2}
L\partial_j  w=\partial_j Lw.
\end{equation}
This can be proved by using~\eqref{L01L} to obtain that, for any~$x\in\Omega$
and~$h\in\R$ sufficiently small,
\begin{eqnarray*}
&& \left|\frac{w(x+\rho\omega+he_i)-w(x+\rho\omega)}{h} -\partial_i w(x+\rho\omega)\right|\\
&=& \left| \int_0^1 \partial_iw (x+\rho\omega+hte_i)-\partial_iw (x+\rho\omega)\,dt\right|\\
&\le& \int_0^1 \,dt
\ell\,|h|^{\alpha-1} \, \big( \dd^{s-\alpha}(x+\rho\omega+hte_i,x+\rho\omega)
+R^{-\alpha}\dd^{s}(x+\rho\omega+hte_i,x+\rho\omega)\big),
\end{eqnarray*}
and then integrating and using the same argument as above.

{F}rom~\eqref{L01L67890ojh1}
and~\eqref{L01L67890ojh2}, one
completes the proof of~\eqref{wanted} when~$\alpha\in(1,1+s)$.\medskip

It only remains to prove~\eqref{Xnuova}.
For this, we take~$\alpha\in(1,1+s)$ and~$x\in B_{R/4}$.
We remark that~$\dd(x)\in [R,4R]$ and~$w(x+\rho\omega)$ vanishes when~$\rho\in(0,R/2)$
(thus so does~$\partial_j w$, for any~$j\in\{1,\cdots,n\}$),
hence, recalling first~\eqref{L01L67890ojh2}
and then~\eqref{L01L}, we have
\begin{eqnarray*}
&& |\partial_j Lw(x)|\,=\,|L\partial_j w(x)|\\
&\le& 2 \int_{\{ |\rho|>R/2\}} \,d\rho
\int_{S^{n-1}}\,da(\omega) \frac{
|\partial_jw(x+\rho\omega)|}{\rho^{1+2s}}
\\ &\le&
C\ell\int_{\{ |\rho|>R/2\} } \,d\rho
\int_{S^{n-1}}\,da(\omega) \frac{
\chi_\Omega(x+\rho\omega)\,\big(
\dd^{s-1}(x+\rho\omega)+R^{-1} \dd^s(x+\rho\omega)\big)
}{\rho^{1+2s}}.\end{eqnarray*}
The term with $\chi_\Omega(x+\rho\omega)\,
\dd^{s-1}(x+\rho\omega)$
at the numerator can be estimated with $C\ell R^{s-1}$
by means of Lemma~\ref{AT2-simple}
(used here with~$\alpha=1$).
The term with~$\dd^s(x+\rho\omega)$ at the numerator
can also be bounded in this way, using Lemma~\ref{dsfvgddseerw}. {F}rom these
considerations we obtain that~$|\partial_j Lw(x)|
\le C\ell R^{-s-1}$, which establishes~\eqref{Xnuova}.
\end{proof}

\section{Iterative $C^{\alpha+2s}$-regularity}\label{IT:SEC}

The cornerstone of our regularity theory is the following
Theorem~\ref{CS}. Namely, we show that if the solution
lies in some H\"older space, than it indeed lies in
a better H\"older space (with estimates).

\begin{thm}\label{CS}
Let~$\alpha\in(0,1+s)$.
Let~$\Omega\subset\R^n$
and assume that either~\eqref{HYP1} or~\eqref{HYP2} is satisfied.

Let $u\in C^s(\R^n)\cap C^\alpha_{\rm loc}(\Omega)$ be a solution to \eqref{eq},
with~$g\in C^\alpha_{\rm loc}(\Omega)$.

If~$\alpha\in(s,1+s)$ assume in addition that
\begin{equation} \label{12sdv56}
\|u\|_{\alpha; \Omega}^{(-s)} <+\infty
.\end{equation}
Then~$u\in C^{\alpha+2s}_{\rm loc}(\Omega)$ and
\begin{equation}\label{TP-fin}
\|u\|_{\alpha+2s; \Omega}^{(-s)} \le C\,\big( \|g\|_{\alpha; \Omega}^{(s)} +
\|u\|_{C^s(\R^n)} +
\chi_{(s,1+s)}(\alpha)\, \|u\|_{\alpha; \Omega}^{(-s)} \big)
\end{equation}
whenever $\alpha+2s$ is not an integer.
\end{thm}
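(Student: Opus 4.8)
The plan is a localization-and-bootstrap argument: split $u$ into a piece supported compactly inside $\Omega$, to which the interior estimate~\eqref{REH2} applies, plus a remainder controlled by Lemma~\ref{WW}; the weighted norms of Section~\ref{WN:SEC} are exactly what turns the constants into scale-invariant ones. \emph{Step 1 (reduction to unit scale).} Fix $x_0\in\Omega$, translate so that $x_0=0$, and set $R:=\tfrac25\dd(0)$, so that $B_{2R}\subseteq\Omega$ and $B_{3R}\cap(\partial\Omega)\neq\varnothing$. Put $\hat u(y):=u(Ry)$, $\hat g(y):=R^{2s}g(Ry)$, $\hat\Omega:=R^{-1}\Omega$; then $L\hat u=\hat g$ in $\hat\Omega$, $\hat u\equiv0$ off $\hat\Omega$, ${\rm dist}(0,\partial\hat\Omega)=\tfrac52$ (so $B_2\subseteq\hat\Omega$ and $B_3\cap(\partial\hat\Omega)\neq\varnothing$), and $\hat\Omega$ still satisfies~\eqref{HYP1}, resp.~\eqref{HYP2} with $C^{1,1}$ norm bounded in terms of that of $\Omega$ since $R\le\tfrac25\,{\rm diam}\,\Omega$ and with the same measure. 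Dilations act on the norms of Section~\ref{WN:SEC} by $\|f(R\,\cdot)\|^{(\sigma)}_{\beta;\hat\Omega}=R^{-\sigma}\|f\|^{(\sigma)}_{\beta;\Omega}$ and $[f(R\,\cdot)]_{C^s(\R^n)}=R^s[f]_{C^s(\R^n)}$, so with
\[
M:=\|\hat g\|^{(s)}_{\alpha;\hat\Omega}+[\hat u]_{C^s(\R^n)}+\chi_{(s,1+s)}(\alpha)\,\|\hat u\|^{(-s)}_{\alpha;\hat\Omega}
= R^s\big(\|g\|^{(s)}_{\alpha;\Omega}+[u]_{C^s(\R^n)}+\chi_{(s,1+s)}(\alpha)\,\|u\|^{(-s)}_{\alpha;\Omega}\big)
\]
it suffices to prove, with $C$ independent of $x_0$, the bound $\|\hat u\|_{C^{\alpha+2s}(B_{1/8})}\le C\,M$ (call it $(\star)$). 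Indeed, unscaling $(\star)$ gives $\dd^{j-s}(x_0)\,|D^ju(x_0)|\le C\big(\|g\|^{(s)}_{\alpha;\Omega}+\|u\|_{C^s(\R^n)}+\chi_{(s,1+s)}(\alpha)\|u\|^{(-s)}_{\alpha;\Omega}\big)$ for $0\le j\le k$ (writing $\alpha+2s=k+\alpha''$, $\alpha''\in(0,1]$), and the analogous bound for $\dd^{\alpha+s}(x,y)\,|D^ku(x)-D^ku(y)|\,|x-y|^{-\alpha''}$ for all pairs with $|x-y|\le\tfrac1{20}\dd(x,y)$ (apply $(\star)$ at the point of smaller distance, with $R=\tfrac25\dd(x,y)$); the remaining pairs are treated exactly as in the proof of Lemma~\ref{NORMS}, using in addition $|u(x)-u(y)|\le[u]_{C^s(\R^n)}|x-y|^s$ when $k=0$. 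This yields both $u\in C^{\alpha+2s}_{\rm loc}(\Omega)$ and the asserted estimate.

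\emph{Step 2 (localization).} Work now at unit scale, writing again $u,g,\Omega$ for $\hat u,\hat g,\hat\Omega$, so ${\rm dist}(0,\partial\Omega)=\tfrac52$. Fix $\eta\in C^\infty_0(B_2)$ with $\eta\equiv1$ on $B_1$, $0\le\eta\le1$, and set $v:=\eta u$, $w:=(1-\eta)u$; then $u=v+w$, $w\equiv0$ on $B_1$, $w\equiv0$ off $\Omega$. Since $u$ vanishes outside $\Omega$ we have $|u(z)|\le[u]_{C^s(\R^n)}\dd^s(z)$ for all $z$; combined with the expansion $w(p)-w(q)=(1-\eta(p))(u(p)-u(q))+u(q)(\eta(q)-\eta(p))$, with the fact that $\nabla\eta$ is supported in $\overline{B_2}\setminus B_1$ where $\dd\sim1$, and with balancing the $\eta$-difference against $|p-q|^s$, this gives $w\in C^s(\R^n)$ with $[w]_{C^s(\R^n)}\le C\,M$. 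If moreover $\alpha\in(s,1+s)$, the same kind of expansion for $w$ (and for $\nabla w$ when $\alpha>1$), fed with the weighted bounds on $u$, $\nabla u$ and $\nabla u(p)-\nabla u(q)$ contained in~\eqref{12sdv56} and with $\dd\sim1$ on $\{\nabla\eta\neq0\}$, shows that $w$ satisfies hypothesis~\eqref{L01L} of Lemma~\ref{WW} (at scale $R=1$) with $\ell\le C\,M$. Hence Lemma~\ref{WW} applies and gives $\|Lw\|_{L^\infty(B_{1/2})}\le C\,M$, $[Lw]_{C^\alpha(B_{1/4})}\le C\,M$, and (if $\alpha>1$) $\|\nabla Lw\|_{L^\infty(B_{1/4})}\le C\,M$; in every case $\|Lw\|_{C^\alpha(B_{1/4})}\le C\,M$, and when $\alpha\le s$ only the $\alpha\le s$ branch of~\eqref{wanted} is invoked, so~\eqref{12sdv56} is not needed.

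\emph{Step 3 (interior estimate and conclusion).} Since $u=v+w$ is a weak solution of $Lu=g$ in $\Omega$ and, by Step~2, $Lw$ is a locally bounded function near $B_{1/4}$, the function $v$ is a weak solution of $Lv=g-Lw$ in $B_{1/4}$. It is compactly supported in $B_2\Subset\Omega$ and lies in $C^\alpha(\R^n)$ with $\|v\|_{C^\alpha(\R^n)}\le C\|u\|_{C^\alpha(\overline{B_2})}\le C\,M$ — on $\overline{B_2}$ one has $\dd\sim1$, so $\|u\|_{C^\alpha(\overline{B_2})}$ is controlled by $\|u\|^{(-s)}_{\alpha;\Omega}$ if $\alpha>s$ and by $[u]_{C^s(\R^n)}$ if $\alpha\le s$ (bounding $\sup_{\overline{B_2}}|u|$ via $|u(z)|\le[u]_{C^s(\R^n)}\dd^s(z)$). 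Likewise $\dd\sim1$ on $B_{1/4}$ gives $\|g\|_{C^\alpha(B_{1/4})}\le C\|g\|^{(s)}_{\alpha;\Omega}\le C\,M$, hence $\|g-Lw\|_{C^\alpha(B_{1/4})}\le C\,M$. The interior estimate~\eqref{REH2} (Theorem~1.1 of~\cite{SROP}), applicable to $v$ with $\beta=\alpha\in(0,1+s)$ because $v\in C^\alpha(\R^n)$ globally and $\alpha+2s\notin\Z$, then yields
\[
\|v\|_{C^{\alpha+2s}(B_{1/8})}\ \le\ C\big(\|g-Lw\|_{C^\alpha(B_{1/4})}+\|v\|_{C^\alpha(\R^n)}\big)\ \le\ C\,M.
\]
Since $w\equiv0$ on $B_1\supset B_{1/8}$ we have $u=v$ there, so $\|u\|_{C^{\alpha+2s}(B_{1/8})}=\|v\|_{C^{\alpha+2s}(B_{1/8})}\le C\,M$, which is $(\star)$.

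\emph{Main obstacle.} The analytic substance is already carried by Lemma~\ref{WW} (hence by the geometric Lemmata of Sections~\ref{S:2a}--\ref{S:2b}) and by the cited interior estimate~\eqref{REH2}; what is delicate here is the bookkeeping. One must pick the dilation factor $R=\tfrac25\dd(x_0)$ so that all constants become genuinely independent of $x_0$ — which is precisely why the weight $\dd^{\,j+\sigma}$ with $\sigma=-s$ is the right one, and why it is $[u]_{C^s(\R^n)}$, rather than $\|u\|_{C^s(\R^n)}$, that transforms cleanly under dilation — and one must verify, uniformly over $\alpha\in(0,1+s)$, that $w=(1-\eta)u$ meets~\eqref{L01L} with $\ell$ comparable to the weighted data (the estimate $|u(z)|\le[u]_{C^s(\R^n)}\dd^s(z)$ near the support of $\nabla\eta$ is what prevents any loss of powers of $R$ here). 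The final reassembly of the pointwise and Hölder bounds into $\|u\|^{(-s)}_{\alpha+2s;\Omega}$ is then the routine dichotomy of Lemma~\ref{NORMS}.
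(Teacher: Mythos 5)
Your argument is correct and is essentially the paper's own proof: you both localize with a cutoff at scale comparable to $\dd(x_0)$, split $u$ into a compactly supported piece plus $w=(1-\eta)u$, control $Lw$ via Lemma~\ref{WW} (after verifying~\eqref{L01L} from the weighted norm), feed the compactly supported piece into the interior estimate of~\cite{SROP}, and reduce the ``far pairs'' to the pointwise bounds by a Lemma~\ref{NORMS}-type triangle inequality. The only difference is notational bookkeeping — you normalize to unit scale at the outset and phrase the core estimate as $(\star)$ at a single point, whereas the paper carries $R=\dd(p)/3$ through the argument and works directly with pairs $(p,q)$ under the dichotomy $|p-q|\lessgtr\dd(p,q)/30$.
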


\begin{proof}
First, notice that \eqref{TP-fin} implies that $u\in C^{\alpha+2s}_{\rm loc}(\Omega)$.
To prove~\eqref{TP-fin}, we fix~$p,q\in\Omega$, $p\ne q$, and we aim to show that
\begin{equation}\label{TP-fin-0}
\begin{split}
& \sum_{j=0}^{k_s} \left( \dd^{j-s}(p)\,|D^j u(p)|
+\dd^{j-s}(q)\,|D^j u(q)|\right)+
\dd^{\alpha+s}(p,q)
\frac{|D^{k_s}u(p)-D^{k_s}u(q)|}{|p-q|^{\alpha'_s}}\\&\qquad\le
C\,\big( \|g\|_{\alpha; \Omega}^{(s)} +
\|u\|_{C^s(\R^n)} +
\chi_{(s,1+s)}(\alpha)\, \|u\|_{\alpha; \Omega}^{(-s)} \big),
\end{split}\end{equation}
where~$k_s\in\N$ and~$\alpha_s'\in(0,1]$ are such that~$\alpha+2s=k_s+\alpha'_s$.
To prove this, we distinguish two cases:
either~$|p-q|<\dd(p,q)/30$ or~$|p-q|\ge \dd(p,q)/30$.

We start with the case~$|p-q|<\dd(p,q)/30$.
Without loss of generality, by possibly exchanging~$p$ and~$q$,
we suppose that
\begin{equation}\label{set1}
\dd(p)\le \dd(q)\end{equation}
and we set
\begin{equation}\label{set2}
R=\frac{\dd(p)}3.\end{equation}
Notice that there exists~$p_\star\in (\partial\Omega)\cap(\partial B_{3R}(p))$
and
$$ |p-q|<\frac{\dd(p,q)}{30}=\frac{\dd(p)}{30}=\frac{R}{10}.$$
Up to a translation, we also suppose that~$p=0$, hence
\begin{eqnarray}
\label{q} && q\in B_{R/10}(p)=B_{R/10},\\
\label{5.F} && \Omega\supseteq B_{3R}(p)=B_{3R}\\
{\mbox{and }} \label{5.FF} &&
p_\star\in (\partial\Omega)\cap (\partial B_{3R}),
\end{eqnarray}
hence formula~\eqref{78fg3992} holds true with this setting.
We let~$\eta_\star\in C^\infty(\R^n,[0,1])$
such that $\eta_\star\equiv1$ in $B_{1}$ and $\eta_\star\equiv0$
outside~$B_{3/2}$. Let us also define~$\eta(x)=\eta_\star(x/R)$.

Let us consider $\bar u=\eta u$, $\vartheta=1-\eta$ and~$w=\vartheta u$.
Since~$\eta_\star$ is fixed once and for all, we can write, for any~$\alpha'\in(0,1]$,
\begin{equation}\label{dcuxj} \begin{split} &
|\vartheta(x)-\vartheta(y)|=|\eta(y)-\eta(x)|=\left| \eta_\star\left(
\frac{y}R\right) - \eta_\star\left( \frac{x}R\right)\right|\\ &\qquad\le
[\eta_\star]_{C^{\alpha'}(\R^n)} \left|\frac{y}R-\frac{x}R\right|^{\alpha'}\le C
R^{-\alpha'} |x-y|^{\alpha'}. \end{split}\end{equation}
Similarly
\begin{equation}\label{dcuxj2}
\nabla\vartheta(x)=-\nabla\eta(x)=-R^{-1}
\nabla\eta_\star\left( \frac{x}R\right)
\end{equation}
and so, for any~$\alpha'\in(0,1]$,
\begin{equation}\label{dcuxj3}
\begin{split}
&|\nabla\vartheta(x)-\nabla\vartheta(y)|=|\nabla\eta(x)-\nabla\eta(y)|
\\&\qquad\le
R^{-1-\alpha'}\|\eta_\star\|_{C^{1+\alpha'}}|x-y|^{\alpha'}\le
CR^{-1-\alpha'}|x-y|^{\alpha'}.
\end{split}\end{equation}
Notice also
that~$w\equiv 0$ in~$B_{R}$ and~$w\equiv0$ outside~$\Omega$. Our goal is
to show that $w$ satisfies the assumptions of Lemma~\ref{WW}. For this,
when~$\alpha\in(s,1+s)$, we need to check condition~\eqref{L01L}.
To this goal, we claim that,
if~$\alpha\in(s,1+s)$, then
\begin{equation}\label{WA} \begin{split}
&{\mbox{condition~\eqref{L01L} holds true with}}\\
&\qquad
\ell=
C\,\big( \|u\|_{\alpha; \Omega}^{(-s)} +[u]_{C^s(\R^n)}\big).
\end{split}\end{equation}
To prove~\eqref{WA}, we split the cases~$\alpha\in(s,1]$
and~$\alpha\in(1,1+s)$.

Let us first deal with the case
\begin{equation}\label{cc}
\alpha\in(s,1].
\end{equation}
We fix~$x$, $y\in\Omega$ with~$|x-y|\le R$ and, up to interchanging~$x$
with $y$, we assume that~$\dd(y)\le \dd(x)$.
Then there exists~$z\in\partial\Omega$ such that~$|y-z|=\dd(y)$,
and so
\begin{equation}\label{vic}
|u(y)|=|u(y)-u(z)|\le [u]_{C^s(\R^n)}|y-z|^s =[u]_{C^s(\R^n)} \dd^s(y).
\end{equation}
Also, by~\eqref{GN}, \eqref{12sdv56} and~\eqref{cc},
\begin{equation*}
|u(x)-u(y)|\le [u]_{\alpha;\Omega}^{(-s)}\,|x-y|^\alpha \dd^{s-\alpha}(y).
\end{equation*}
Therefore, recalling also~\eqref{dcuxj},
\begin{equation*}
\begin{split}
|w(x)-w(y)|\,&\le |\vartheta(x)|\,|u(x)-u(y)|+|u(y)|\,|\vartheta(x)-\vartheta(y)|\\
&\le C\big(
[u]_{\alpha;\Omega}^{(-s)}\,|x-y|^\alpha \dd^{s-\alpha}(y)+[u]_{C^s(\R^n)} \dd^s(y)
R^{-\alpha} |x-y|^\alpha\big).
\end{split}\end{equation*}
This says that, in this case,
condition~\eqref{L01L} holds true, with~$\ell=
C\,\big( [u]_{\alpha; \Omega}^{(-s)} +[u]_{C^s(\R^n)}\big)$,
and this proves~\eqref{WA} when~$\alpha\in(s,1]$.

Now we prove~\eqref{WA} when~$\alpha\in(1,1+s)$.
In this case, we can write
\begin{equation}\label{dd}
\alpha=1+\alpha', \end{equation}
with~$\alpha'\in(0,s)$,
hence we use~\eqref{GN} (with index~$j=1$) to deduce that in this case
\begin{equation}\label{9sd567897899fg}
\|u\|_{\alpha;\Omega}^{(-s)}\ge
\dd^{1-s}(x)\,|\nabla u(x)|.\end{equation}
Hence, recalling~\eqref{dcuxj2} and~\eqref{vic},
\begin{equation}\label{9idvfb78gfd}
\begin{split}
|\nabla w(x)|\,&\le |\vartheta(x)|\,|\nabla u(x)|+
|\nabla\vartheta(x)|\,|u(x)|\\
&\le C\,\big(\|u\|_{\alpha;\Omega}^{(-s)}\, \dd^{s-1}(x)+
R^{-1}\,[u]_{C^s(\R^n)} \dd^s(x)\big).
\end{split}
\end{equation}
Now we take~$x$, $y\in\Omega$, with~$|x-y|\le R$,
and suppose, without loss of generality that~$\dd(x,y)=\dd(y)\le \dd(x)$.
Since~$\alpha'\in(0,s)$, we have that
\begin{equation}\label{eoi}
|u(x)-u(y)|
\le [u]_{C^s(\R^n)} |x-y|^s\le
[u]_{C^s(\R^n)} R^{s-\alpha'} \,|x-y|^{\alpha'}.
\end{equation}
Also,
using~\eqref{dd} once again, we obtain
$$ [u]_{\alpha;\Omega}^{(-s)}
\ge \dd^{\alpha-s}(y)
\frac{|\nabla u(x)-\nabla u(y)|}{|x-y|^{\alpha'}} $$
and therefore, recalling also~\eqref{dcuxj}, \eqref{dcuxj2}, \eqref{dcuxj3},
\eqref{vic}
\eqref{9sd567897899fg} and~\eqref{eoi},
\begin{eqnarray*}
&& |\nabla w(x)-\nabla w(y)|\\
&=& |
\vartheta(x)\nabla u(x)+\nabla\vartheta(x)u(x)
-\vartheta(y)\nabla u(y)-\nabla\vartheta(y)u(y)|\\
&\le&
|\vartheta(x)\nabla u(x)-\vartheta(x)\nabla u(y)|
+|\vartheta(x)\nabla u(y)
-\vartheta(y)\nabla u(y)|\\ &&\quad
+|\nabla\vartheta(x)u(x)-\nabla\vartheta(x)u(y)|+
|\nabla\vartheta(x) u(y)-\nabla\vartheta(y)u(y)| \\
&\le& C
|\nabla u(x)-\nabla u(y)|
+CR^{-\alpha'} |\nabla u(y)|\,|x-y|^{\alpha'}
\\ &&\quad+CR^{-1}
|u(x)-u(y)|+ [u]_{C^s(\R^n)} \dd^s(y)\,
|\nabla\vartheta(x) -\nabla\vartheta(y)|\\
&\le&
C\,[u]_{\alpha;\Omega}^{(-s)} \dd^{s-\alpha}(y)
|x-y|^{\alpha'}
+CR^{-\alpha'}
\|u\|_{\alpha;\Omega}^{(-s)} \dd^{s-1}(y)
\,|x-y|^{\alpha'}
\\ &&\quad+CR^{s-\alpha'-1}
[u]_{C^s(\R^n)}\,|x-y|^{\alpha'}
+CR^{-1-\alpha'} [u]_{C^s(\R^n)} \dd^s(y)\,|x-y|^{\alpha'}.
\end{eqnarray*}
By taking common factors and recalling~\eqref{dd}, we obtain
that~$|\nabla w(x)-\nabla w(y)|$
is bounded from above by
\begin{equation}\label{0sdfhktfk--1}\begin{split}
& C\,\big( [u]_{C^s(\R^n)}+
\|u\|_{\alpha;\Omega}^{(-s)}\big)\,|x-y|^{\alpha-1}\,
\\&\qquad\cdot \big(
\dd^{s-\alpha}(y)
+R^{1-\alpha} \dd^{s-1}(y)
+R^{s-\alpha}
+R^{-\alpha} \dd^s(y)
\big).
\end{split}\end{equation}
Now we observe that
\begin{eqnarray*} &&\frac{1}{R^{\alpha-1} \dd^{1-s}(y)}<
\frac{1}{\dd^{\alpha-s}(y)}+\frac{1}{R^{\alpha-s}}\\
{\mbox{and }}&&
\frac{1}{R^{\alpha-s}}< \frac{1}{\dd^{\alpha-s}(y)}+
\frac{\dd^s(y)}{R^{\alpha} },\end{eqnarray*}
which follows directly from Young inequality. (Alternatively, this can be checked by considering the cases~$R\ge \dd(y)$
and~$R<\dd(y)$, and recalling that here~$\alpha>1>s$.)
Consequently,
$$ R^{1-\alpha} \dd^{s-1}(y)
+R^{s-\alpha}
\le 2\big(
\dd^{s-\alpha}(y)+R^{-\alpha} \dd^s(y)\big)$$
and then~\eqref{0sdfhktfk--1} yields
that
that~$|\nabla w(x)-\nabla w(y)|$
is bounded from above by
$$ C\,\big( [u]_{C^s(\R^n)}+
\|u\|_{\alpha;\Omega}^{(-s)}\big)\,|x-y|^{\alpha-1}\,
\big(
\dd^{s-\alpha}(y)+R^{-\alpha} \dd^s(y)
\big).$$
This and~\eqref{9idvfb78gfd}
complete the proof of~\eqref{WA} also when~$\alpha\in(1,1+s)$.

Having completed the proof of~\eqref{WA},
now we estimate~$\|w\|_{C^s(\R^n)}$.
We claim that
\begin{equation}\label{w c s}
[w]_{C^s(\R^n)}\le C\,[u]_{C^s(\R^n)}
.\end{equation}
To check this, we fix~$a$, $b\in\R^n$ and
we aim to bound~$|w(a)-w(b)|$.
To this goal, by possibly exchanging $a$ and~$b$,
we suppose that~$|a|\ge|b|$.
Now we distinguish two cases: either~$|b|\ge2R$ or~$|b|<2R$.
If~$|b|\ge 2R$
we have that both~$a$ and~$b$ lie outside~$B_{3R/2}$, and so~$\vartheta(a)=
\vartheta(b)=1$. Accordingly
$$ |w(a)-w(b)|=|u(a)-u(b)|\le [u]_{C^s(\R^n)}|a-b|^s,$$
and~\eqref{w c s} is proved in this case. On the other hand, if~$|b|<2R$
we use~\eqref{5.FF} and we get
$$|u(b)|=|u(b)-u(p_\star)|\le [u]_{C^s(\R^n)}|b-p_\star|^s\le C[u]_{C^s(\R^n)}R^s. $$
Therefore, recalling~\eqref{dcuxj} (used here with~$\alpha'=s$),
\begin{eqnarray*}
|w(b)-w(b)|&\le&
|\vartheta(a)|\,|u(a)-u(b)|+|u(b)|\,|\vartheta(a)-\vartheta(b)|\\
&\le& C\,\Big( [u]_{C^s(\R^n)}|a-b|^s +[u]_{C^s(\R^n)}R^s
\cdot R^{-s} |a-b|^s\Big),
\end{eqnarray*}
which establishes~\eqref{w c s}
in this case too.

Now we take~$\ell$ as in~\eqref{WA}
when~$\alpha\in(s,1+s)$ and,
for definiteness, we also set~$\ell=0$
when~$\alpha
\in[0,s]$: then, with this notation, we deduce
by Lemma~\ref{WW} and formula~\eqref{w c s} that
\begin{eqnarray*}
&& R^{2s}\|Lw\|_{L^\infty(B_{R/2})} \le CR^s\,[w]_{C^s(\R^n)}\le
CR^s\,[u]_{C^s(\R^n)}\\
{\mbox{and }}&&
R^{\alpha+s}[Lw]_{C^\alpha(B_{R/2})} \leq C\,\Big(
[w]_{C^s(\R^n)}+
\ell \Big)\le
C\,\Big(
[u]_{C^s(\R^n)}+\ell\Big),
\end{eqnarray*}
and, in case~$\alpha\in(1,1+s)$, also
$$ R^{1+s}\|\nabla Lw\|_{L^\infty(B_{R/2})}\le C\ell\le
C\,\Big(
[u]_{C^s(\R^n)}+\ell\Big).$$
As a consequence, if~$\alpha=k+\alpha'$ with~$k\in\N$ and~$\alpha'\in(0,1]$,
we can write the weighted estimate
\begin{equation}\label{WWW}
\sum_{j=0}^k R^{j+s} \|D^j Lw\|_{L^\infty(B_{R/2})}
+
R^{\alpha+s}[Lw]_{C^\alpha(B_{R/2})}\le
C\,\Big(
[u]_{C^s(\R^n)}+\ell\Big).
\end{equation}
Now we show that $\bar u\in C^\alpha(\R^n)$ ---recall that $\bar u=u-w$ was defined just before \eqref{dcuxj}---, with
\begin{equation}\label{u bar}
[\bar u]_{C^\alpha(\R^n)}\le C\,\big(
[u]_{C^s(\R^n)}+\ell\big)\, R^{s-\alpha}.
\end{equation}
To this goal,
we distinguish two cases, either~$\alpha\in(0,s]$
or~$\alpha\in(s,1+s)$.

Let us first deal with the case~$\alpha\in(0,s]$.
We let~$x$, $y\in\R^n$ and we estimate~$|\bar u(x)-\bar u(y)|$.
If both~$x$ and $y$ lies outside~$B_{3R/2}$, then~$\bar u(x)=\bar u(y)=0$
and so~$|\bar u(x)-\bar u(y)|=0$. Thus we may assume (up to exchanging~$x$
and~$y$) that~$y\in B_{3R/2}$. So, by~\eqref{5.FF},
\begin{equation}\label{u y 6}
|u(y)|=
|u(y)-u(p_\star)|\le C[u]_{C^s(\R^n)}
\,R^s.\end{equation}
To complete the proof of~\eqref{u bar} when~$\alpha\in(0,s]$,
we now distinguish
two sub-cases: either~$|x|\ge 7R/4$ or~$|x|<7R/4$. If~$|x|\ge 7R/4$,
we have that~$\bar u(x)=0$ and
$$|x-y|\ge |x|-|y| \ge |x|-\frac{3R}{2}\ge\frac{R}{4}.$$
Consequently, we use~\eqref{u y 6}
and we get
\begin{eqnarray*}&& |\bar u(x)-\bar u(y)|=|\bar u(y)|\le |u(y)|\le
C[u]_{C^s(\R^n)}\,R^s \\&&\qquad=C[u]_{C^s(\R^n)}\,R^{s-\alpha} R^\alpha\le
C[u]_{C^s(\R^n)}\,R^{s-\alpha}|x-y|^\alpha,\end{eqnarray*}
and this establishes~\eqref{u bar} in this sub-case.
Now we deal with the sub-case in which~$|x|<7R/4$.
Then~$|x-y|\le (7R/4)+(3R/2)\le CR$
and so
\begin{equation*}
|u(x)-u(y)|\le[u]_{C^s(\R^n)}|x-y|^s
\le C \,[u]_{C^s(\R^n)}|x-y|^\alpha R^{s-\alpha}.\end{equation*}
Therefore, recalling~\eqref{dcuxj} and~\eqref{u y 6},
we deduce that
\begin{eqnarray*}
|\bar u(x)-\bar u(y)|
&\le& |\eta(x)|\,|u(x)-u( y)|+|u(y)|\,|\eta(x)-\eta(y)|\\
&\le& C \,[u]_{C^s(\R^n)}\,R^{s-\alpha}|x- y|^\alpha\end{eqnarray*}
and this proves~\eqref{u bar}
also in this sub-case.

Having completed the proof of~\eqref{u bar}
when~$\alpha\in(0,s]$, we now deal with the case~$\alpha\in(s,1+s)$.
In this case, we take~$x$, $y\in\Omega$
and we write~$\alpha=k+\alpha'$, with~$k\in\{0,1\}$
and~$\alpha'\in(0,1]$ and
\begin{equation}\label{estom}
\ell\ge  \|u\|_{\alpha; \Omega}^{(-s)}
\ge
\dd^{\alpha-s}(x,y)
\frac{|D^ku(x)-D^ku(y)|}{|x-y|^{\alpha'}} +\dd^{-s}(z)\,|u(z)|+
\dd^{k-s}(z)\,|D^k u(z)|,
\end{equation}
for any~$x,y,z\in\Omega$ (notice that we have used~\eqref{GN}
with indices~$j=0$ and~$j=k$).

Also, $\bar u$ (and so~$D^k\bar u$) vanishes in~$\R^n\setminus\overline{B_{3R/2}}$,
therefore,
to estimate~$|D^k\bar u(x)-D^k\bar u(y)|$,
we may assume (up to interchanging~$x$ and~$y$)
that~$y\in \overline{B_{3R/2}}$.

So we distinguish two sub-cases: either also~$x\in
B_{7R/4}$ or~$x\in\R^n\setminus B_{7R/4}$.

Let us start with the sub-case~$x\in B_{7R/4}$.
Notice that
$$ B_{7R/4} \subseteq \big\{
\zeta\in \R^n {\mbox{ s.t. }} \dd(\zeta)\ge R\big\},$$
thanks to~\eqref{5.F}, so in particular~$\dd(x)\ge R$ and~$\dd(y)\ge R$,
and therefore~$\dd(x,y)\ge R$. In addition,
by~\eqref{5.FF}, we have that
$\dd(x)+\dd(y)\le CR$.
So, from~\eqref{estom}, we obtain, for~$x$ and~$y$
as above and every~$\zeta\in B_{7R/4}$,
\begin{equation}\label{IO11}\begin{split}
&|D^ku(x)-D^ku(y)|\le \ell \,\dd^{s-\alpha}(x,y)\,|x-y|^{\alpha'}
\le
C \ell \,R^{s-\alpha}\,|x-y|^{\alpha'} ,\\
&|D^k u(\zeta)|\le \ell \,\dd^{s-k}(\zeta)\le C\ell \,R^{s-k}\\
{\mbox{and }}\;\;&
|u(x)|+|u(y)|\le \ell\,\big(\dd^s(x)+\dd^{s}(y)\big)\le C\ell\,R^s.
\end{split}\end{equation}
We remark that we can also take~$\zeta=x$ or~$\zeta=y$ in~\eqref{IO11}
if we wish.
Now we claim that
\begin{equation}\label{IO12}
|u(x)-u(y)|\le \ell \,R^{s-\alpha'}\,|x-y|^{\alpha'}.
\end{equation}
Indeed, if~$k=0$ we have that~\eqref{IO12}
reduces to~\eqref{IO11}; if instead~$k=1$, we use~\eqref{IO11}
to get that
$$ |u(x)-u(y)|\le \sup_{
\zeta\in B_{7R/4} }
|\nabla u(\zeta)|\,|x-y|\\
\le C\ell \,R^{s-1}\,|x-y|.$$
Also
$$ |x-y|\le |x|+|y|\le \frac74R+\frac32R<4R,$$
thus
$$ |u(x)-u(y)|\le C\ell \,R^{s-1}\,|x-y|^{1-\alpha'}\,|x-y|^{\alpha'}\le
C\ell \,R^{s-1}\,R^{1-\alpha'}\,|x-y|^{\alpha'},$$
up to changing the constants, and this proves~\eqref{IO12}.

Now we remark that
$$ D^k\bar u = D^k(\eta u)= \frac{1}{2-k} \big(\eta D^k u + u D^k \eta\big),$$
for~$k\in\{0,1\}$, therefore
\begin{equation}\label{0eosdythrew11}\begin{split}
&|D^k\bar u(x)-D^k\bar u(y)| \\ =\,&\frac{1}{2-k}
\big| \eta (x) D^k u(x) + u(x) D^k \eta(x)
-\eta (y) D^k u(y)- u(y) D^k \eta(y)\big| \\
\le\,& C\,\Big(
\big| \eta (x) D^k u(x) -\eta (y) D^k u(y)\big|
+ \big|u(x) D^k \eta(x)- u(y) D^k \eta(y)\big|\Big)\\
\le\,&
C\,\Big(|\eta(x)|\,|D^k u(x) -D^k u(y)|
+|D^k u(y)|\,|\eta(x)-\eta (y)|
\\ &\qquad\;+ |u(x)|\,| D^k \eta(x)- D^k \eta(y)|
+|D^k \eta(y)|\,|u(x)-u(y)|
\Big).
\end{split}\end{equation}
Also, by \eqref{dcuxj} and~\eqref{dcuxj3}
$$
|\eta(y)-\eta(x)|
\le CR^{-\alpha'} |x-y|^{\alpha'}\;{\mbox{ and }}\;
|D^k \eta(y)-D^k \eta(x)|
\le CR^{-k-\alpha'} |x-y|^{\alpha'},$$
thus, by~\eqref{IO11} and~\eqref{IO12},
\begin{eqnarray*}
&& |\eta(x)|\,|D^k u(x) -D^k u(y)|
+|D^k u(y)|\,|\eta(x)-\eta (y)|
\\ &&\qquad+ |u(x)|\,| D^k \eta(x)- D^k \eta(y)|
+|D^k \eta(y)|\,|u(x)-u(y)|
\\ &&\qquad\qquad\le C \ell\,\big( R^{s-\alpha}+R^{s-k-\alpha'}\big)
\,|x-y|^{\alpha'}\\ &&\qquad\qquad=C \ell\,R^{s-\alpha}\,|x-y|^{\alpha'},
\end{eqnarray*}
up to renaming constants.
Then, we insert this into~\eqref{0eosdythrew11}
and we obtain
the proof of~\eqref{u bar} in the sub-case~$
x\in B_{7R/4}$.

Now we consider the sub-case~$x
\in\R^n\setminus B_{7R/4}$.
In this case~$\bar u$ (and so~$D^k\bar u$) vanishes in the vicinity
of~$x$, therefore
$$ |D^k\bar u(x)-D^k\bar u(y)|=
|D^k\bar u(y)|=
\frac{1}{2-k} \big|\eta(y) D^k u(y) + u(y) D^k \eta(y)\big|.$$
Therefore, by~\eqref{dcuxj2} and~\eqref{IO11},
\begin{equation}\label{dsocghjkfgs99}
|D^k\bar u(x)-D^k\bar u(y)|\le
C\big(
|\eta(y)|\,| D^k u(y)| + |u(y)|\,| D^k \eta(y)|
\big)\le C\ell R^{s-k}.\end{equation}
Now we use that~$x$ is outside~$B_{7R/4}$ and~$y$ inside~$\overline{B_{3R/2}}$
to conclude that~$|x-y|\ge |x|-|y|\ge R/4$. Hence we deduce from~\eqref{dsocghjkfgs99}
that
$$ |D^k\bar u(x)-D^k\bar u(y)|\le
C\ell R^{s-k-\alpha'}\,R^{\alpha'}\le
C\ell R^{s-\alpha}\,|x-y|^{\alpha'}.$$
This completes the proof of~\eqref{u bar} in the case~$\alpha\in(s,1+s)$.
The proof of~\eqref{u bar} is therefore finished.

Now we show that
\begin{equation}\label{nuova}
\|\bar u\|_{L^\infty(\R^n)}\le C R^s \,\big([u]_{C^s(\R^n)}+\ell\big).
\end{equation}
For this, fix~$x\in\R^n$. If~$|x|\ge 2R$, then~$|\bar u(x)|=0$
and we are done, so we may suppose that~$|x|<2R$.
If~$\alpha\in(0,s]$, we use the fact that~$\bar u(2R e_1)=0$
to conclude that
$$ |\bar u(x)|=|\bar u(x)-\bar u(2R e_1)|\le [\bar u]_{C^\alpha(\R^n)}
|x-2Re_1|^\alpha\le C [\bar u]_{C^\alpha(\R^n)} R^\alpha,$$
hence, by~\eqref{u bar}, we see that~$|\bar u(x)|\le C
\big([u]_{C^s(\R^n)}+\ell\big) R^{s-\alpha}\cdot
R^\alpha$, as desired.
If instead~$\alpha\in(s,1+s)$, we use~\eqref{GN} with index~$j=0$
to see that
$$ \ell\ge \dd^{-s}(x)\,|u(x)|.$$
Hence, since by~\eqref{5.FF} we have that
$$ \dd(x)\le|x-p_\star|\le|x|+|p_\star|\le 5R,$$
we obtain~$|u(x)|\le \ell\,\dd^s(x)\le \ell R^s$.
These considerations prove~\eqref{nuova}.

Now we claim that,
if~$\alpha=k+\alpha'\in(0,1+s)$ with~$k\in\N$ and~$\alpha'\in(0,1]$, then
\begin{equation}\label{YYY}
\sum_{j=0}^k R^{j-s} \|D^j \bar u\|_{L^\infty(\R^n)}
+
R^{\alpha-s} [\bar u]_{C^\alpha(\R^n)}\le
C \,\big([u]_{C^s(\R^n)}+\ell\big).\end{equation}
Indeed, if~$k=0$, formula~\eqref{YYY} follows by combining~\eqref{u bar}
and~\eqref{nuova}. If instead~$k>0$, then necessarily~$k=1$,
since~$k\le\alpha<1+s<2$.
Consequently, using that~$\bar u$ vanishes
outside~$B_{3R/2}$
(thus~$\nabla\bar u(Re_1)=0$)
and~\eqref{u bar}, we obtain
\begin{eqnarray*} && R^{k-s} \|D^k \bar u\|_{L^\infty(\R^n)}
=R^{1-s} \sup_{B_{3R/2}}|\nabla\bar u(x)|
=R^{1-s} \sup_{B_{3R/2}}|\nabla\bar u(x)-\nabla\bar u(Re_1)|
\\ &&\qquad\le R^{1-s} \,[\bar u]_{C^\alpha(\R^n)}
\sup_{B_{3R/2}} |x-Re_1|^{\alpha'}\le C R^{1-s+s-\alpha+\alpha'}\,
C \,\big([u]_{C^s(\R^n)}+\ell\big)\\
&&\qquad=C \,\big([u]_{C^s(\R^n)}+\ell\big).
\end{eqnarray*}
This, \eqref{u bar}
and~\eqref{nuova} then imply~\eqref{YYY} also in this case.
\medskip

After all this (rather technical, but useful)
preliminary work,
we are ready to perform the regularity theory
needed in this setting. For this,
we notice that~$\bar u=u\eta=u(1-\vartheta)=u-w$, therefore
\begin{equation}\label{ZZ}
L\bar u = g-Lw \qquad{\mbox{ in }} B_{3R/2}.\end{equation}
It is now useful to scale~$\bar u$, by setting~$\bar u_R(x)=R^{-2s}\bar u(Rx)$.
We have that~$L\bar u_R(x)=L\bar u(Rx)$.
Therefore by Theorem~1.1(b) in~\cite{SROP},
\begin{equation}\label{athgr09}
\| \bar u_R \|_{C^{\alpha+2s}(B_{1/4})}\le C\,\big(
\| \bar u_R \|_{C^{\alpha}(\R^n)}+\| L\bar u_R \|_{C^\alpha(B_{1/2})}
\big).\end{equation}
To scale back, we notice that, for any~$\beta$, $a>0$,
\begin{eqnarray*}
&& \|D^j \bar u_R\|_{L^\infty(B_a)} =R^{j-2s}\|D^j\bar u\|_{L^\infty(B_{aR})},
\qquad
[\bar u_R]_{C^{\beta}(B_a)}=R^{\beta-2s}[\bar u]_{C^\beta(B_{aR})},\\
&&
\|D^jL\bar u_R\|_{L^\infty(B_a)} =R^j\|D^j L\bar u\|_{L^\infty(B_{aR})}\qquad
{\mbox{and}}\qquad
[L\bar u_R]_{C^{\beta}(B_a)}=R^{\beta}[L\bar u]_{C^\beta(B_{aR})}.\end{eqnarray*}
So, if we write~$\alpha=k +\alpha'$, with~$k\in\N$ and~$\alpha'\in(0,1]$,
we have that
\begin{equation}\label{9tfdfghTYh889055jk}
\begin{split}
\|\bar u_R\|_{C^{\alpha}(B_{a})}\,=\,&
\sum_{j=0}^{k} \|D^j \bar u_R\|_{L^\infty(B_{a})}+
[\bar u_R]_{C^{\alpha}(B_{a})}\\
=\,&
\sum_{j=0}^{k} R^{j-2s}\|D^j \bar u\|_{L^\infty(B_{aR})}+
R^{\alpha-2s}[\bar u]_{C^{\alpha}(B_{aR})}.
\end{split}\end{equation}
Similarly,
if~$\alpha+2s=k_s +\alpha'_s$, with~$k_s\in\N$ and~$\alpha'_s\in(0,1]$,
\begin{equation}\label{Ploa678iJJJ}
\|\bar u_R\|_{C^{\alpha+2s}(B_{a})}
=
\sum_{j=0}^{k_s} R^{j-2s}\|D^j \bar u\|_{L^\infty(B_{aR})}+
R^{\alpha}[\bar u]_{C^{\alpha+2s}(B_{aR})}\end{equation}
and
\begin{equation}\label{9tfdfghTYh889055jk-folL}
\|L \bar u_R\|_{C^{\alpha}(B_{a})}
=
\sum_{j=0}^{k} R^{j}\|D^j L\bar u\|_{L^\infty(B_{aR})}+
R^{\alpha}[L\bar u]_{C^{\alpha}(B_{aR})}.\end{equation}
{F}rom~\eqref{YYY} and~\eqref{9tfdfghTYh889055jk}
we see that
\begin{equation}\label{OIJJijx78}
R^s\,\|\bar u_R\|_{C^{\alpha}(\R^n)}\le
C \,\big([u]_{C^s(\R^n)}+\ell\big).\end{equation}
Also, from~\eqref{ZZ}
and~\eqref{9tfdfghTYh889055jk-folL},
\begin{equation*}
\|L \bar u_R\|_{C^{\alpha}(B_{1/2})}
\le
\sum_{j=0}^{k} R^{j}\|D^j Lw\|_{L^\infty(B_{R/2})}+
R^{\alpha}[Lw]_{C^{\alpha}(B_{R/2})}+\Gamma_g,
\end{equation*}
where
$$\Gamma_g=
\sum_{j=0}^{k} R^{j}\|D^j g\|_{L^\infty(B_{R/2})}+
R^{\alpha}[g]_{C^{\alpha}(B_{R/2})}.$$
Accordingly, from \eqref{WWW},
\begin{equation*}
R^s \|L \bar u_R\|_{C^{\alpha}(B_{1/2})}
\le
C\,\Big(
[u]_{C^s(\R^n)}+\ell+R^s\Gamma_g\Big).
\end{equation*}
This, \eqref{athgr09} and~\eqref{OIJJijx78}
give that
\begin{equation*}
R^s\,\| \bar u_R \|_{C^{\alpha+2s}(B_{1/4})}
\le C\,\Big(
[u]_{C^s(\R^n)}+\ell+R^s\Gamma_g\Big).
\end{equation*}
As a consequence of this and~\eqref{Ploa678iJJJ}, we conclude that
\begin{equation}\label{67890xsusus}
\sum_{j=0}^{k_s} R^{j-s}\|D^j \bar u\|_{L^\infty(B_{R/4})}+
R^{\alpha+s}[\bar u]_{C^{\alpha+2s}(B_{R/4})}
\le C\,\Big(
[u]_{C^s(\R^n)}+\ell+R^s\Gamma_g\Big).
\end{equation}
Now, from~\eqref{set1}, \eqref{set2} and~\eqref{q}, we have
$$ \dd(p,q)=\dd(p)=3R \;{\mbox{ and }}\; \frac{R}{10}\ge|p-q|=|q|,$$
therefore~$\dd(q)\le |q-p_\star|\le 4R$, thanks to~\eqref{5.FF}.
This says that~$\dd(p)$, $\dd(q)$ and~$\dd(p,q)$ are all comparable to~$R$, hence~$R^s\Gamma_g\le
C\|g\|_{\alpha; \Omega}^{(s)}$ and~\eqref{67890xsusus}
gives
\begin{equation}\label{suhj0dc3RG}
\begin{split}
&\sum_{j=0}^{k_s} \Big( \dd(p)^{j-s} |D^j \bar u(p)|+
\dd(q)^{j-s} |D^j \bar u(q)|\Big)+
\dd^{\alpha+s}(p,q)\,\frac{|D^{k_s}\bar u(p)-D^{k_s}\bar u(q)|}{|p-q|^{\alpha'_s}}
\\&\qquad\le C\,\Big(
[u]_{C^s(\R^n)}+\ell+\|g\|_{\alpha; \Omega}^{(s)}\Big).\end{split}\end{equation}
Since~$\bar u=u$ in~$B_{R/4}$, and~$p$ and~$q$ lie in such ball, thanks to~\eqref{q},
we can replace~$\bar u$ with~$u$ in~\eqref{suhj0dc3RG}, and this
establishes~\eqref{TP-fin-0} when~$|p-q|<\dd(p,q)/30$.
\medskip

Let us now suppose that~$|p-q|\geq \dd(p,q)/30$
and let us prove~\eqref{TP-fin-0} in this case.
The proof of this will rely on the fact that we have already
proved~\eqref{TP-fin-0} when~$|p-q|<\dd(p,q)/30$.
We first check that
\begin{equation}\label{PRE-p}
\sum_{j=0}^{k_s} \dd^{j-s}(p)\,|D^j u(p)|
\le C\,\big( \|g\|_{\alpha; \Omega}^{(s)} +
\|u\|_{C^s(\R^n)} +
\chi_{(s,1+s)}(\alpha)\, \|u\|_{\alpha; \Omega}^{(-s)} \big).
\end{equation}
For this we take a sequence of points~$p_m\to p$ as~$m\to\infty$.
Since~$\dd(p)>0$, for~$m$ large we have that~$|p-p_m|\le \dd(p)/100$
and thus
$$ \dd(p_m)\ge \dd(p)-|p-p_m|\ge \frac{99\,\dd(p)}{100}.$$
Therefore~$\dd(p,p_m)\ge 99\,\dd(p)/100$ and thus
$$ |p-p_m|\le \frac{\dd(p)}{100} \le \frac{\dd(p,p_m)}{99}<\frac{\dd(p,p_m)}{30}.$$
Since we have already proved~\eqref{TP-fin-0} in this case, we can
use it at the points~$p$ and~$p_m$ and conclude that
\begin{eqnarray*}
&&\sum_{j=0}^{k_s} \left( \dd^{j-s}(p)\,|D^j u(p)|
+\dd^{j-s}(p_m)\,|D^j u(p_m)|\right)+
\dd^{\alpha+s}(p,p_m)
\frac{|D^{k_s}u(p)-D^{k_s}u(p_m)|}{|p-p_m|^{\alpha'_s}}\\&&\qquad\le
C\,\big( \|g\|_{\alpha; \Omega}^{(s)} +
\|u\|_{C^s(\R^n)} +
\chi_{(s,1+s)}(\alpha)\, \|u\|_{\alpha; \Omega}^{(-s)} \big),
\end{eqnarray*}
which in turn implies~\eqref{PRE-p}.

The same argument used to prove~\eqref{PRE-p}
applied to the point~$q$ instead of~$p$ gives that
\begin{equation}\label{PRE-q}
\sum_{j=0}^{k_s} \dd^{j-s}(q)\,|D^j u(q)|
\le C\,\big( \|g\|_{\alpha; \Omega}^{(s)} +
\|u\|_{C^s(\R^n)} +
\chi_{(s,1+s)}(\alpha)\, \|u\|_{\alpha; \Omega}^{(-s)} \big).
\end{equation}
Now we want to prove that
\begin{equation}
\label{PRE-p-q}
\dd^{\alpha+s}(p,q)
\frac{|D^{k_s}u(p)-D^{k_s}u(q)|}{|p-q|^{\alpha'_s}}\le
C\,\big( \|g\|_{\alpha; \Omega}^{(s)} +
\|u\|_{C^s(\R^n)} +
\chi_{(s,1+s)}(\alpha)\, \|u\|_{\alpha; \Omega}^{(-s)} \big).
\end{equation}
For this, we use the condition~$|p-q|\ge \dd(p,q)/30$
and the fact that
$$\alpha+s=\alpha+2s-s=k_s+\alpha'_s-s$$
to realize that
\begin{eqnarray*}&&\dd^{\alpha+s}(p,q)
\frac{|D^{k_s}u(p)-D^{k_s}u(q)|}{|p-q|^{\alpha'_s}}
\le  C\,
\dd^{k_s+\alpha'_s-s}(p,q)
\frac{|D^{k_s}u(p)-D^{k_s}u(q)|}{\dd^{\alpha'_s}(p,q)}
\\ &&\qquad\le
C\,
\dd^{k_s-s}(p,q)\big(
|D^{k_s}u(p)|+|D^{k_s}u(q)|
\big)\\ &&\qquad\le C\,\dd^{k_s-s}(p)\,|D^{k_s}u(p)|+
C\,\dd^{k_s-s}(q)\,|D^{k_s}u(q)|.\end{eqnarray*}
This estimate, together with~\eqref{PRE-p} and
\eqref{PRE-q} (used here with~$j=k_s$),
establishes~\eqref{PRE-p-q}.

Now, by collecting the estimates in~\eqref{PRE-p},
\eqref{PRE-q} and~\eqref{PRE-p-q}, we
complete the proof of~\eqref{TP-fin-0}
when~$|p-q|\ge \dd(p,q)/30$.
This is the end of the proof of Theorem~\ref{CS}.
\end{proof}

By iterating Theorem~\ref{CS}, and using again the notation in~\eqref{GN},
we obtain:

\begin{cor}\label{IT}
Let~$\beta\in (0,1+s)$.
Let~$\Omega\subset\R^n$
and assume that either~\eqref{HYP1} or~\eqref{HYP2} is satisfied.

Let $u\in C^s(\R^n)$ be a
solution to \eqref{eq}, with~$g\in C^\beta_{\rm loc}(\Omega)$ and~$
\|g\|_{\beta;\Omega}^{(s)}<+\infty$.

Then~$u\in C^{\beta+2s}_{\rm loc}(\Omega)$
and
\begin{equation}\label{8idf6w7euif}
\|u\|_{\beta+2s; \Omega}^{(-s)}
\le C\,\big( \|g\|_{\beta; \Omega}^{(s)} +
\|u\|_{C^s(\R^n)} \big),
\end{equation}
whenever~$\beta+2s$ is not an integer.
\end{cor}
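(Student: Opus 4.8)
The plan is to deduce Corollary~\ref{IT} from Theorem~\ref{CS} by a finite iteration, raising the H\"older exponent by (essentially) $2s$ at each step, starting from the a priori information $u\in C^s(\R^n)$ and ending at the exponent $\beta+2s$. First, if $\beta\le s$ there is nothing to iterate: $u\in C^s(\R^n)\subseteq C^\beta_{\rm loc}(\Omega)$, no extra hypothesis is required in Theorem~\ref{CS} since $\beta\le s$, and $\beta+2s$ is not an integer by assumption, so \eqref{8idf6w7euif} is exactly the conclusion of Theorem~\ref{CS} applied with $\alpha=\beta$. So I would assume $\beta\in(s,1+s)$ and fix a finite increasing chain of exponents $0<\alpha_1<\alpha_2<\dots<\alpha_N=\beta$ with $\alpha_1\le s$, with $\alpha_{i+1}\le\alpha_i+2s$ for every $i$, and with $\alpha_i+2s\notin\Z$ for every $i$ (the case $i=N$ being covered by the standing hypothesis on $\beta$). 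Such a chain exists because the ``forbidden'' set $\{\alpha\in(0,1+s):\alpha+2s\in\Z\}$ is finite — indeed it has at most two elements, $1-2s$ (present only if $s<1/2$) and $2-2s$ (present only if $s>1/3$), since $1+s<2$ — while the admissible increments fill the interval $(0,2s]$; hence one starts at some $\alpha_1\in(0,s]$ off the forbidden set and repeatedly moves up by at most $2s$, slightly perturbing the increment at each of the finitely many steps that would otherwise land on a forbidden value, reaching $\beta$ after a number of steps $N=N(\beta,s)$ depending only on $\beta$ and $s$.

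Next I would run an induction on $i=1,\dots,N$, with inductive statement: $u\in C^{\alpha_i+2s}_{\rm loc}(\Omega)$ and $\|u\|_{\alpha_i+2s;\Omega}^{(-s)}\le C\big(\|g\|_{\beta;\Omega}^{(s)}+\|u\|_{C^s(\R^n)}\big)$, which for $i=N$ is exactly \eqref{8idf6w7euif}. At the $i$-th step one checks that the hypotheses of Theorem~\ref{CS} with $\alpha=\alpha_i$ hold: $u\in C^s(\R^n)$ by assumption; $u\in C^{\alpha_i}_{\rm loc}(\Omega)$ (for $i=1$ because $\alpha_1\le s$, for $i\ge2$ because the previous step gives $u\in C^{\alpha_{i-1}+2s}_{\rm loc}(\Omega)$ and $\alpha_i\le\alpha_{i-1}+2s$); $g\in C^{\alpha_i}_{\rm loc}(\Omega)$ with $\|g\|_{\alpha_i;\Omega}^{(s)}\le C\|g\|_{\beta;\Omega}^{(s)}$ by Lemma~\ref{NORMS} (as $\alpha_i\le\beta$); and, when $\alpha_i>s$, also $\|u\|_{\alpha_i;\Omega}^{(-s)}\le C\|u\|_{\alpha_{i-1}+2s;\Omega}^{(-s)}<+\infty$, again by Lemma~\ref{NORMS} (as $\alpha_i\le\alpha_{i-1}+2s$). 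Since moreover $\alpha_i+2s\notin\Z$ by construction, Theorem~\ref{CS} gives $u\in C^{\alpha_i+2s}_{\rm loc}(\Omega)$ together with $\|u\|_{\alpha_i+2s;\Omega}^{(-s)}\le C\big(\|g\|_{\alpha_i;\Omega}^{(s)}+\|u\|_{C^s(\R^n)}+\chi_{(s,1+s)}(\alpha_i)\,\|u\|_{\alpha_i;\Omega}^{(-s)}\big)$; inserting $\|g\|_{\alpha_i;\Omega}^{(s)}\le C\|g\|_{\beta;\Omega}^{(s)}$ and, by the inductive hypothesis plus Lemma~\ref{NORMS}, $\|u\|_{\alpha_i;\Omega}^{(-s)}\le C\big(\|g\|_{\beta;\Omega}^{(s)}+\|u\|_{C^s(\R^n)}\big)$ closes the induction and yields \eqref{8idf6w7euif}.

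I expect the only genuinely delicate point to be the combinatorial bookkeeping of the chain — arranging that the intermediate exponents can always be chosen so that $\alpha_i+2s$ avoids the integers while still moving up by at most $2s$ per step and hitting $\beta$ exactly at the last step. This is elementary, since the forbidden set is finite and the available increments form an interval of positive length, so a fixed small perturbation at each of the boundedly many offending steps is enough; and because $N$ depends only on $\beta$ and $s$, compounding the $N$ applications of Theorem~\ref{CS} keeps the constant $C$ in \eqref{8idf6w7euif} finite (depending only on $n$, $s$, $\lambda$, $\Lambda$, $\Omega$ and $\beta$). Apart from this, the argument is just a direct chaining of Theorem~\ref{CS} with the monotonicity of the weighted norms provided by Lemma~\ref{NORMS}.
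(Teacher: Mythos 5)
Your proposal is correct and follows essentially the paper's argument: a finite bootstrap of Theorem~\ref{CS}, with Lemma~\ref{NORMS} controlling the intermediate weighted norms. The only difference is bookkeeping --- the paper fixes the increment at exactly $2s$ and chooses a starting point $\vartheta$ of opposite rationality type to $s$ so that $\vartheta+2js$ never hits an integer, whereas you allow variable increments and perturb around the (finite) forbidden set; your explicit enumeration of that set is slightly off when $s>2/3$ (there it is $\{2-2s,\,3-2s\}$ rather than $\{1-2s,\,2-2s\}$), but since only finiteness is used this does not affect the argument.
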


\begin{proof}
First, notice that \eqref{8idf6w7euif} implies that $u\in C^{\beta+2s}_{\rm loc}(\Omega)$.
To prove \eqref{8idf6w7euif}, the rough idea is to iterate Theorem~\ref{CS}
say, starting with H\"older exponent (possibly below)~$s$, to get~$s+2s$,
then~$s+2s+2s$ and so on, till we reach the threshold
imposed by~$\beta$.

To make this argument rigorous we
argue as follows.
If~$\beta\in(0,s]$, then
we use Theorem~\ref{CS} with~$\alpha=\beta$
and we obtain~\eqref{8idf6w7euif}.
Thus, we can assume that
\begin{equation}\label{ser}
\beta\in(s,1+s).\end{equation}
Given~$s\in(0,1)\cap\Q$ (resp., $s\in(0,1)\setminus\Q$),
we fix~$\vartheta\in(0,s)\setminus\Q$ (resp.,~$\vartheta\in(0,s)\cap\Q$).
By construction, for any~$j\in\N$,
we have that~$\vartheta+2js \not\in \Q$, and so in particular
\begin{equation}\label{not integer}
\vartheta+2js \not\in\N.
\end{equation}
We remark that
\begin{equation}\label{ser2}
\vartheta<s<\beta, \end{equation}
thanks to~\eqref{ser}.
We let~$J\in\N$ the largest integer~$j$ for which~$\vartheta+2js\le\beta+2s$.
By construction
\begin{equation}\label{CONSTR}
\vartheta+2Js\in(\beta,\,\beta+2s].
\end{equation}
Furthermore $J\ge1$, due to~\eqref{ser2}.
We also denote by~$C_1>1$ the constant
appearing in~\eqref{TP-fin} and by~$C_2>1$
the one appearing in Lemma~\ref{NORMS}
(these constants were called~$C$ in those statements, but for clarity
we emphasize now these constant by giving to them a special name
and, without loss of generality, we can suppose that they
are larger than~$1$). Let also~$C_\star=2(C_1+C_2)^2$.
We claim that, for any~$j\in\{1,\cdots,J\}$, we have $u\in C^s(\R^n)\cap C^{\vartheta+2js}_{loc}(\Omega)$ and
\begin{equation}\label{induction}
\|u\|_{\vartheta+2js; \Omega}^{(-s)}
\le C_\star^j\,\big( \|g\|_{\beta; \Omega}^{(s)} +
\|u\|_{C^s(\R^n)} \big).\end{equation}
The proof is by induction. First, we use
Theorem~\ref{CS} with~$\alpha=\vartheta\in(0,s)$:
notice that~$\vartheta+2s$ is not an integer, thanks to~\eqref{not integer},
and therefore Theorem~\ref{CS} yields that
\begin{equation}\label{123}
\|u\|_{\vartheta+2s; \Omega}^{(-s)}
\le C_1\,\big( \|g\|_{\vartheta; \Omega}^{(s)} +
\|u\|_{C^s(\R^n)} \big).\end{equation}
Also, in view of~\eqref{ser2} and Lemma~\ref{NORMS},
we have that~$
\|g\|_{\vartheta; \Omega}^{(s)}\le C_2 \|g\|_{\beta; \Omega}^{(s)}$.
By plugging this information into~\eqref{123}
and using that~$C_1$, $C_2>1$, we see that~\eqref{induction}
holds true for~$j=1$.

Now we perform the induction step, i.e.
we suppose that~\eqref{induction} holds true for
some~$j\in\{1,\cdots,J-1\}$ and we prove it for~$j+1$.
For this, we use
Theorem~\ref{CS} with~$\alpha=\vartheta+2js$. We remark
that~$\vartheta+js+2s\not\in\N$, thanks to~\eqref{not integer},
hence Theorem~\ref{CS} applies and it gives that
\begin{equation}\label{trdfashtj}
\|u\|_{\vartheta+2(j+1)s; \Omega}^{(-s)}
\le C_1\,\big( \|g\|_{\vartheta+2js; \Omega}^{(s)} +
\|u\|_{C^s(\R^n)} +\|u\|_{\vartheta+2js; \Omega}^{(-s)}
\big).\end{equation}
Notice also that, by~\eqref{CONSTR},
$$ \vartheta+2js\le \vartheta+2(J-1)s =\vartheta+2Js-2s\le\beta+2s-2s=\beta.$$
This and Lemma~\ref{NORMS} imply that
\begin{equation*}
\|g\|_{\vartheta+2js; \Omega}^{(s)}\le C_2 \|g\|_{\beta; \Omega}^{(s)}.
\end{equation*}
Accordingly, we deduce from~\eqref{trdfashtj} that
$$ \|u\|_{\vartheta+2(j+1)s; \Omega}^{(-s)}
\le C_1\,\big( C_2\|g\|_{\beta; \Omega}^{(s)} +
\|u\|_{C^s(\R^n)} +\|u\|_{\vartheta+2js; \Omega}^{(-s)}
\big).$$
Hence, since by inductive assumption~\eqref{induction} holds true for~$j$,
\begin{equation}
\begin{split}
\|u\|_{\vartheta+2(j+1)s; \Omega}^{(-s)}
\,&\le
C_1\,\Big( C_2\|g\|_{\beta; \Omega}^{(s)} +
\|u\|_{C^s(\R^n)} +
C_\star^j\,\big( \|g\|_{\beta; \Omega}^{(s)} +
\|u\|_{C^s(\R^n)} \big)\Big)\\
&\le
2 C_1\,C_\star^j\,\big( \|g\|_{\beta; \Omega}^{(s)} +
\|u\|_{C^s(\R^n)} \big).
\end{split}\end{equation}
This proves~\eqref{induction} for~$j+1$, and implies that $u\in C^s(\R^n)\cap C^{\vartheta+2(j+1)s}_{loc}(\Omega)$.
The inductive step is thus completed, and we have
established~\eqref{induction}.

Now we observe that
$$ \|u\|_{\beta; \Omega}^{(-s)}\le C_2
\|u\|_{\vartheta+2Js; \Omega}^{(-s)},$$
due to~\eqref{CONSTR}
and Lemma~\ref{NORMS}. Thus, using~\eqref{induction}
with~$j=J$,
\begin{equation}\label{induction-2}
\|u\|_{\beta; \Omega}^{(-s)}
\le C_\star^{J+1}\,\big( \|g\|_{\beta; \Omega}^{(s)} +
\|u\|_{C^s(\R^n)} \big).
\end{equation}
Now we use Theorem~\ref{CS} for the last time,
here with~$\alpha=\beta$. Notice
that~$\beta+2s\not\in\N$, by the assumption of Corollary~\ref{IT}:
consequently Theorem~\ref{CS} can be exploited and we obtain
$$ \|u\|_{\beta+2s; \Omega}^{(-s)}
\le C_\star^{J+1}\,\big( \|g\|_{\beta; \Omega}^{(s)} +
\|u\|_{C^s(\R^n)} +\|u\|_{\beta; \Omega}^{(-s)}
\big).$$
This and~\eqref{induction-2}
imply that we can bound~$\|u\|_{\beta+2s; \Omega}^{(-s)}$
by a constant times~$\|g\|_{\beta; \Omega}^{(s)} +
\|u\|_{C^s(\R^n)}$, and this completes the proof of
Corollary~\ref{IT}.
\end{proof}

With this, we can now complete the proof of the main result:

\begin{proof}[Proof of Theorem~\ref{MAIN}]
By Proposition~4.6 in~\cite{SROP},
we know that~$u\in C^s(\R^n)$, with
\begin{equation}\label{OP}
\| u\|_{C^s(\R^n)}\le C\,\|g\|_{L^\infty(\Omega)}.
\end{equation}
Also, if~$x\in\Omega_\delta$ then $\dd(x)\ge\delta$,
while if~$x\in\Omega$ then~$\dd(x)$ is controlled by the diameter of~$\Omega$.
In particular~$\dd^{j-s}(x)\ge \delta^j /({\rm diam}(\Omega))^s$
for every~$x\in \Omega_\delta$.
{F}rom this,
and recalling also~\eqref{RE:VB67},
we obtain that
\begin{equation*}\begin{split}
&\|u\|_{\beta+2s; \Omega}^{(-s)}\ge c_o\,
\|u\|_{C^{\beta+2s}(\Omega_\delta)}\\
{\mbox{and }}\;&
\|g\|_{\beta; \Omega}^{(s)}
\le C_o\,\|g\|_{C^\beta(\Omega)}
\end{split}\end{equation*}
for some~$c_o$, $C_o>0$, possibly depending on~$\delta$ and~$\Omega$.
Using this,~\eqref{OP} and
Corollary~\ref{IT} we obtain
\begin{eqnarray*}
&& \|u\|_{C^{\beta+2s}(\Omega_\delta)}\le c_o^{-1}
\|u\|_{\beta+2s; \Omega}^{(-s)}
\\&&\qquad
\le c_o^{-1}\,C\,\big( \|g\|_{\beta; \Omega}^{(s)} +
\|u\|_{C^s(\R^n)} \big)\\
&&\qquad
\le c_o^{-1}\,C\,\big( C_o\,\|g\|_{C^\beta(\Omega)}
+C\,\|g\|_{L^\infty(\Omega)} \big).\end{eqnarray*}
The latter term is in turn bounded bounded by a constant,
possibly depending on~$\delta$ and~$\Omega$, times~$\|g\|_{C^\beta(\Omega)}$,
hence the desired result plainly follows.
\end{proof}

\section{Constructing a counterexample}\label{CO:CO:S}

This section is devoted to the construction
of the counterexample of Theorem~\ref{MAIN2}.
For this, we start with an auxiliary lemma that says, roughly speaking, that the operator~$L$
``looses $2s$ derivatives'':

\begin{lem}\label{0sdfuvetryu45t}
Let~$\beta\in(0,1)$ and~$v\in C^{\beta+2s}(\R^n)$. Then~$Lv\in C^\beta(B_1)$
and~$[Lv]_{C^\beta(B_1)}\le C\,[v]_{C^{\beta+2s}(\R^n)}$.
\end{lem}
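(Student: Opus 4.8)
The plan is to estimate $Lv(x)-Lv(y)$ for $x,y\in B_1$ directly from the pointwise form of $L$; this is legitimate because $v\in C^{\beta+2s}(\R^n)$ is in particular bounded, so the integral defining $Lv$ converges absolutely and $Lv$ is bounded on $B_1$. Writing $D^\omega_\rho(x):=2v(x)-v(x+\rho\omega)-v(x-\rho\omega)$, so that $Lv(x)=\int_0^{+\infty}d\rho\int_{S^{n-1}}da(\omega)\,\rho^{-1-2s}D^\omega_\rho(x)$, one has
\[
|Lv(x)-Lv(y)|\le\int_0^{+\infty}d\rho\int_{S^{n-1}}da(\omega)\,\frac{|D^\omega_\rho(x)-D^\omega_\rho(y)|}{\rho^{1+2s}},
\]
so, since $a(S^{n-1})<+\infty$, it suffices to prove that, uniformly in $\omega\in S^{n-1}$, and with $\sigma:=\beta+2s\in(0,3)$ (recall $\beta<1$ and $s<1$) and $h:=|x-y|$,
\[
\int_0^{+\infty}\frac{|D^\omega_\rho(x)-D^\omega_\rho(y)|}{\rho^{1+2s}}\,d\rho\ \le\ C\,[v]_{C^{\beta+2s}(\R^n)}\,h^{\beta}.
\]

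The core of the argument is a pair of elementary pointwise bounds on the second difference, obtained by Taylor expansion of $v$ and the fact that second differences annihilate affine functions. \emph{At small scales $\rho\le h$}, using $|D^\omega_\rho(x)-D^\omega_\rho(y)|\le|D^\omega_\rho(x)|+|D^\omega_\rho(y)|$ and the expansion of $v$, one gets $|D^\omega_\rho(z)|\le C[v]_{C^\sigma}\rho^\sigma$ when $\sigma<2$; when $\sigma\in(2,3)$ the quadratic part does not cancel in the single second differences but it does cancel in $D^\omega_\rho(x)-D^\omega_\rho(y)$, leaving $\rho^2$ times the $C^{\sigma-2}$ modulus of $D^2v$, which for $\rho\le h$ is absorbed into $\rho^2h^{\sigma-2}$; in all cases
\[
|D^\omega_\rho(x)-D^\omega_\rho(y)|\ \le\ C\,[v]_{C^\sigma(\R^n)}\,\rho^{\min(\sigma,2)}\,h^{(\sigma-2)_+}\qquad(\rho\le h).
\]
\emph{At large scales $\rho\ge h$}, the crucial point is to use the regularity of $x\mapsto D^\omega_\rho(x)$, not only its size: for $\sigma>1$, $\nabla_\zeta D^\omega_\rho(\zeta)=2\nabla v(\zeta)-\nabla v(\zeta+\rho\omega)-\nabla v(\zeta-\rho\omega)$ is itself a second difference, now of $\nabla v\in C^{\sigma-1}$ with $\sigma-1\in(0,2)$, so $\sup_\zeta|\nabla_\zeta D^\omega_\rho(\zeta)|\le C[v]_{C^\sigma}\rho^{\sigma-1}$ and hence
\[
|D^\omega_\rho(x)-D^\omega_\rho(y)|\ \le\ C\,[v]_{C^\sigma(\R^n)}\,\rho^{\sigma-1}\,h\qquad(\rho\ge h,\ \sigma>1);
\]
for $\sigma\le1$ one instead uses $|D^\omega_\rho(x)-D^\omega_\rho(y)|\le 4[v]_{C^\sigma}h^\sigma$, from first differences of $v$.

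To conclude, split $\int_0^{+\infty}=\int_0^h+\int_h^{+\infty}$. On $(0,h)$ the small-scale bound gives $C[v]_{C^\sigma}h^{(\sigma-2)_+}\int_0^h\rho^{\min(\sigma,2)-1-2s}\,d\rho=C[v]_{C^\sigma}h^\beta$, the integral converging because $\min(\sigma,2)-2s>0$ (it equals $\beta$ if $\sigma\le2$ and $2-2s$ if $\sigma>2$). On $(h,+\infty)$, for $\sigma>1$ the large-scale bound gives $C[v]_{C^\sigma}h\int_h^{+\infty}\rho^{\sigma-2-2s}\,d\rho=C[v]_{C^\sigma}h^\beta$, the integral converging at $+\infty$ because $\sigma-2-2s=\beta-2<-1$; for $\sigma\le1$ it gives $C[v]_{C^\sigma}h^\sigma\int_h^{+\infty}\rho^{-1-2s}\,d\rho=C[v]_{C^\sigma}(2s)^{-1}h^{\sigma-2s}=C[v]_{C^\sigma}(2s)^{-1}h^\beta$. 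Summing the two pieces and multiplying by $a(S^{n-1})$ yields $[Lv]_{C^\beta(B_1)}\le C[v]_{C^{\beta+2s}(\R^n)}$; together with the boundedness of $Lv$ on $B_1$ this gives $Lv\in C^\beta(B_1)$. (The constant degenerates as $\beta\to0^+$, $\beta\to1^-$, and $s\to0^+$, as expected from the roles of these exponents.)

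The step I expect to be the real obstacle is the large-scale estimate: the bound one gets by merely integrating the H\"older modulus of $\nabla v$ over the segment $[x,y]$, namely a bound of the form $C[v]_{C^\sigma}h^{\sigma-1}\rho$ for $\rho\ge h$, grows too fast in $\rho$ to be summable against $\rho^{-1-2s}$ at infinity (when $2s\le1$); one genuinely needs the faster-decaying factor $\rho^{\sigma-1}h$, which comes from differentiating $D^\omega_\rho$ in the base point and using that second differences kill affine functions. The rest is routine but must be carried out case by case in $\sigma\in(0,1]$, $\sigma\in(1,2]$, $\sigma\in(2,3)$ — in particular, for $\sigma>2$ the gain $h^{\sigma-2}$ at small scales must come from the difference $D^\omega_\rho(x)-D^\omega_\rho(y)$ and not from a single second difference.
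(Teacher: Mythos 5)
Your proof is correct and follows essentially the same route as the paper's: split the integral at $\rho=|x-y|$, estimate the second difference $w_{\rho,\omega}(x)-w_{\rho,\omega}(y)$ pointwise for small $\rho$ (using $\rho^{\min(\sigma,2)}h^{(\sigma-2)_+}$) and via a gradient-in-base-point estimate $\rho^{\sigma-1}h$ for large $\rho$, treating the cases $\sigma\in(0,1]$, $(1,2]$, $(2,3)$ separately. The only cosmetic difference is that the paper funnels the small-scale integral through an auxiliary exponent $\kappa\in(0,\beta)$ (its \eqref{prepa3}) rather than tracking the powers directly as you do; both give $I_1+I_2\le C[v]_{C^{\beta+2s}}h^\beta$.
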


\begin{proof} Notice that by construction~$\beta+2s\in(0,3)$.
Let~$x$, $y\in B_1$ and
\begin{equation*}
r=|x-y|<2.\end{equation*}
Also, for any fixed~$\omega\in S^{n-1}$ and~$\rho\ge0$, let
$$ w_{\rho,\omega}(x)=2v(x)-v(x+\rho\omega)-v(x-\rho\omega).$$
Then
\begin{eqnarray*}
&& |Lv(x)-Lv(y)|\le I_1+I_2,\quad{\mbox{with }}\\&& I_1 =
\int_0^r
\,d\rho
\int_{S^{n-1}}\,da(\omega)\,
 \frac{
|w_{\rho,\omega}(x)-w_{\rho,\omega}(y)|}{\rho^{1+2s}}\\
&&{\mbox{and }}\quad I_2 =
\int_r^{+\infty}\,d\rho
\int_{S^{n-1}}\,da(\omega)\,
 \frac{|w_{\rho,\omega}(x)-w_{\rho,\omega}(y)|}{\rho^{1+2s}}
.\end{eqnarray*}
To estimate~$I_1$ and~$I_2$,
we first prove that
\begin{equation}\label{prepa}
|w_{\rho,\omega}(x)-w_{\rho,\omega}(y)|
\le \left\{
\begin{matrix}
C\,[v]_{C^{\beta+2s}(\R^n)}
\,|x-y|^{\beta+2s} & \quad {\mbox{ if }}\beta+2s\in(0,1],\\
C\,[v]_{C^{\beta+2s}(\R^n)}\,\rho
\,|x-y|^{\beta+2s-1}
& \quad {\mbox{ if }}\beta+2s\in(1,2],\\
C\,[v]_{C^{\beta+2s}(\R^n)}\,\rho^2
\,|x-y|^{\beta+2s-2}
& \quad {\mbox{ if }}\beta+2s\in(2,3).
\end{matrix}
\right.\end{equation}
To prove this, let us first consider the case~$\beta+2s\in(0,1]$.
In this case, we have that
\begin{equation}
\label{LAMA}
|v(x\pm\rho\omega)-v(y\pm\rho\omega)|\le
[v]_{C^{\beta+2s}(\R^n)}|x-y|^{\beta+2s},\end{equation}
for every~$\rho\geq0$,
and this implies~\eqref{prepa} when~$\beta+2s\in(0,1]$.

If~$\beta+2s\in(1,2]$
we use the Fundamental Theorem of Calculus in the variable~$\rho$
to write
\begin{equation}\label{TF1}
w_{\rho,\omega}(x) =\int_0^\rho \, d\tau \left[
-\nabla v(x+\tau\omega)\cdot\omega +\nabla v(x-\tau\omega)\cdot\omega\right]
.\end{equation}
Notice also that, for any~$\tau\in\R$,
$$ \big| \nabla v(x+\tau\omega)\cdot\omega-\nabla v(y+\tau\omega)\cdot\omega
\big|\le [v]_{C^{\beta+2s}(\R^n)}|x-y|^{\beta+2s-1},$$
since~$\beta+2s\in(1,2]$. This inequality and~\eqref{TF1}
give that
$$ \big|w_{\rho,\omega}(x)-w_{\rho,\omega}(y)\big|
\le \int_0^\rho
2 [v]_{C^{\beta+2s}(\R^n)}|x-y|^{\beta+2s-1}
\, d\tau,$$
that establishes~\eqref{prepa} in this case.

Now we deal with the case~$\beta+2s\in(2,3)$: for this,
we use the Fundamental Theorem of Calculus in the variable~$\rho$
twice in~\eqref{TF1} and we see that
$$ w_{\rho,\omega}(x) =\int_0^\rho \, d\tau \,\int_{\tau}^{-\tau}\,d\sigma\,
D^2_{ij} v(x+\sigma\omega)\,\omega_i\omega_j.$$
Consequently
\begin{eqnarray*}&& \big|w_{\rho,\omega}(x)-w_{\rho,\omega}(y)\big|
\le
\int_0^\rho \, d\tau \,\int_{-\tau}^{\tau}\,d\sigma\,
\Big| D^2_{ij} v(x+\sigma\omega)-D^2_{ij} v(y+\sigma\omega)\Big|
\\ &&\qquad\leq\int_0^\rho \, d\tau \,\int_{-\tau}^{\tau}\,d\sigma\,
[v]_{C^{\beta+2s}(\R^n)} |x-y|^{\beta+2s-2},\end{eqnarray*}
since~$\beta+2s\in(2,3)$, and this establishes~\eqref{prepa} in this case
as well.

The proof of~\eqref{prepa} is thus complete, and now
we show that
\begin{equation}\label{prepa-diff}
|w_{\rho,\omega}(x)-w_{\rho,\omega}(y)|
\le \left\{
\begin{matrix}
C\,[v]_{C^{\beta+2s}(\R^n)}
\,\rho^{\beta+2s} & \quad {\mbox{ if }}\beta+2s\in(0,1],\\
C\,[v]_{C^{\beta+2s}(\R^n)}\,|x-y|
\,\rho^{\beta+2s-1}
& \quad {\mbox{ if }}\beta+2s\in(1,3).
\end{matrix}
\right.\end{equation}
To prove~\eqref{prepa-diff},
we distinguish three cases.
If~$\beta+2s\in(0,1]$, we have that~\eqref{prepa-diff}
follows easily from the fact that~$|v(x)-v(x\pm\rho\omega)|\le
[v]_{C^{\beta+2s}(\R^n)}\,\rho^{\beta+2s}$, and the same for~$y$
instead of~$x$.
If~$\beta+2s\in(1,2]$ we notice that
$$ \nabla w_{\rho,\omega}(x)=
2\nabla v(x)-\nabla v(x+\rho\omega)-\nabla v(x-\rho\omega)$$
and so
the Fundamental Theorem of Calculus in the space variable gives that
\begin{equation}
\label{0-9-0}
\begin{split}
& w_{\rho,\omega}(x)-w_{\rho,\omega}(y)\\ =\,&\int_0^1\,dt\,
\nabla w_{\rho,\omega}(y+t(x-y))\cdot (x-y)\\
=\,&\int_0^1\,dt\,
\Big(
2\nabla v(y+t(x-y))-\nabla v(y+t(x-y)+\rho\omega)-\nabla v(y+t(x-y)-\rho\omega)
\Big)\cdot(x-y).
\end{split}
\end{equation}
Now we notice that
\begin{equation}\label{9sdfg5rggrre}
\big|\nabla v(y+t(x-y))-\nabla v(y+t(x-y)\pm\rho\omega)\big|
\leq[v]_{C^{\beta+2s}(\R^n)}\,\rho^{\beta+2s-1},
\end{equation}
since here~$\beta+2s\in(1,2]$. By inserting~\eqref{9sdfg5rggrre}
into~\eqref{0-9-0} we obtain
$$ |w_{\rho,\omega}(x)-w_{\rho,\omega}(y)|\le
\int_0^1\,dt\,
2[v]_{C^{\beta+2s}(\R^n)}\,\rho^{\beta+2s-1}\,|x-y|,$$
which proves~\eqref{prepa-diff} in this case.

It remains to prove~\eqref{prepa-diff}
when~$\beta+2s\in(2,3)$. In this case, we apply the
Fundamental Theorem of Calculus in the variable~$\rho$
once more to obtain
\begin{eqnarray*}
\partial_i v(y+t(x-y))-\partial_i v(y+t(x-y)\pm \rho\omega) =
\pm \int_\rho^0\,d\sigma\,
D^2_{ij} v(y+t(x-y)\pm \sigma\omega) \,\omega_j
\end{eqnarray*}
and therefore
\begin{eqnarray*}
&& \Big|
2\partial_i v(y+t(x-y))-\partial_i v(y+t(x-y)+\rho\omega)-
\partial_iv(y+t(x-y)-\rho\omega)\Big|\\
&=&\left|
\int_\rho^0\,d\sigma\,
\Big( D^2_{ij} v(y+t(x-y)+\sigma\omega)
-D^2_{ij} v(y+t(x-y)-\sigma\omega)\Big)
\,\omega_j
\right|
\\ &\le& \int_0^\rho\,d\sigma\;
[v]_{C^{\beta+2s}(\R^n)} (2\sigma)^{\beta+2s-2}
\\ &=& C\,[v]_{C^{\beta+2s}(\R^n)}\,\rho^{\beta+2s-1},
\end{eqnarray*}
where the condition~$\beta\in(2,3)$ was used. By plugging
this information into~\eqref{0-9-0}, we obtain
$$ |w_{\rho,\omega}(x)-w_{\rho,\omega}(y)|\le \int_0^1\,dt\,
C\,[v]_{C^{\beta+2s}(\R^n)}\,\rho^{\beta+2s-1}\,|x-y|,$$
which establishes~\eqref{prepa-diff} also in this case.

Now we show that
\begin{equation}\label{prepa2}
{\mbox{if $\beta+2s\in (0,2]$, then }}\quad
|w_{\rho,\omega}(x)|
\le C\,[v]_{C^{\beta+2s}(\R^n)}
\,\rho^{\beta+2s}.\end{equation}
Indeed, if~$\beta\in(0,1]$
we have that~$|v(x)-v(x\pm\rho\omega)|\le [v]_{C^{\beta+2s}(\R^n)}
\,\rho^{\beta+2s}$, and this implies~\eqref{prepa2} in this case.
If instead~$\beta\in (1,2]$, then
we use~\eqref{TF1} to see that
$$ |w_{\rho,\omega}(x)| \le
\int_0^\rho \, d\tau\,
|\nabla v(x+\tau\omega)-\nabla v(x-\tau\omega)|
\le \int_0^\rho \, d\tau \, [v]_{C^{\beta+2s}(\R^n)} \,(2\tau)^{\beta+2s-1},$$
which gives~\eqref{prepa2} also in this case.

Now we claim that there exists~$\kappa \in(0,\,\beta)$ such that
\begin{equation}\label{prepa3}\begin{split}
&{\mbox{for any $\rho\in [0,r]$}}\\
&\big|w_{\rho,\omega}(x)-w_{\rho,\omega}(y)\big|\le
C\,[v]_{C^{\beta+2s}(\R^n)}\,\rho^{2s+\kappa} r^{\beta-\kappa}.
\end{split}\end{equation}
To check this, we
distinguish two cases. When~$\beta+2s\in(0,2]$,
we define~$\kappa=\beta/2$ and
use~\eqref{prepa2}
and the assumption that~$\rho\le r=|x-y|$
to conclude that
$$ |w_{\rho,\omega}(x)|
\le C\,[v]_{C^{\beta+2s}(\R^n)}
\,\rho^{\kappa+2s}\,\rho^{\beta-\kappa}\le
C\,[v]_{C^{\beta+2s}(\R^n)}
\,\rho^{\kappa+2s}\,r^{\beta-\kappa}.$$
Since the same holds when~$x$ is replaced by~$y$,
we have that~\eqref{prepa3} when~$\beta+2s\in
(0,2]$ follows from the above formula
and the triangle inequality.

When instead~$\beta+2s\in(2,3)$, we take
$\kappa=\min\left\{ \frac{\beta}{2},\,1-s\right\}$
and we use~\eqref{prepa} and the assumption that~$\rho\le r=|x-y|$
to deduce that
\begin{eqnarray*}&& \big|w_{\rho,\omega}(x)-w_{\rho,\omega}(y)\big|
\le
C\,[v]_{C^{\beta+2s}(\R^n)}\,\rho^2 r^{\beta+2s-2}
\\ &&\qquad=
C\,[v]_{C^{\beta+2s}(\R^n)}\,\rho^{2s+\kappa} \rho^{2-2s-\kappa}
r^{\beta+2s-2}\le
C\,[v]_{C^{\beta+2s}(\R^n)}\,\rho^{2s+\kappa} r^{2-2s-\kappa}
r^{\beta+2s-2},
\end{eqnarray*}
which provides the proof of~\eqref{prepa3}
also in this case.

Having completed these preliminary estimates, we are now
in the position to
estimate~$I_1$. For this, we use~\eqref{prepa3}
and the fact that~$\kappa>0$ to see that, for any fixed~$\omega\in S^{n-1}$,
$$ \int_0^r
\frac{\big|w_{\rho,\omega}(x)-w_{\rho,\omega}(y)\big|}{\rho^{1+2s}}\,
d\rho\le
C\,[v]_{C^{\beta+2s}(\R^n)}\,
r^{\beta-\kappa}
\int_0^r \rho^{2s+\kappa-1-2s}\,d\rho = C\,[v]_{C^{\beta+2s}(\R^n)}\,
r^{\beta}.$$
As a consequence, by integrating in~$\omega\in S^{n-1}$,
we obtain that
\begin{equation}\label{NA-J2}
I_1\le C[v]_{C^{\beta+2s}(\R^n)}r^\beta.
\end{equation}
Now we estimate~$I_2$.
We claim that
\begin{equation}\label{NA-J3}
I_2\le C[v]_{C^{\beta+2s}(\R^n)}r^\beta.
\end{equation}
To prove this, we distinguish two cases.
If~$\beta+2s\in(0,1]$,
we use~\eqref{LAMA} and the fact that~$|x-y|=r$
to deduce that
$$ \big|w_{\rho,\omega}(x)-w_{\rho,\omega}(y)\big|\le
C\,[v]_{C^{\beta+2s}(\R^n)}\,r^{\beta+2s}.$$
Therefore
$$ I_2\le C\,[v]_{C^{\beta+2s}(\R^n)}\,r^{\beta+2s}
\int_r^{+\infty}\,d\rho
\,\rho^{-1-2s}
=C\,[v]_{C^{\beta+2s}(\R^n)}\,r^{\beta},$$
which proves~\eqref{NA-J3} in this case.

If instead~$\beta+2s\in(1,3)$
we use~\eqref{prepa-diff} to write
$$\big|w_{\rho,\omega}(x)-w_{\rho,\omega}(y)\big|
\le
C\,[v]_{C^{\beta+2s}(\R^n)}\,r
\,\rho^{\beta+2s-1}$$
and so to obtain that
$$ I_2\le C\,[v]_{C^{\beta+2s}(\R^n)}\,r
\int_r^{+\infty}\,d\rho\,\rho^{\beta+2s-1-1-2s}
=C\,[v]_{C^{\beta+2s}(\R^n)}\,r\,r^{\beta-1}.$$
This proves~\eqref{NA-J3} also in this case.

By combining~\eqref{NA-J2} and~\eqref{NA-J3}, we conclude that
$$|Lv(x)-Lv(y)|\le I_1+I_2\leq C\,[v]_{C^{\beta+2s}(\R^n)}r^\beta
=C\,[v]_{C^{\beta+2s}(\R^n)}\,|x-y|^\beta,$$
from which the desired result easily follows.
\end{proof}

Now we recall a useful, explicit barrier:

\begin{lem}\label{phi}
Let~$\phi(x)=(1-|x|^2)^s_+$
and~$L$ be as in~\eqref{RI}. Then
$L\phi(x)=c$, for every~$x\in B_1$, where~$c>0$ is a suitable constant,
only depending on~$n$ and~$s$.
\end{lem}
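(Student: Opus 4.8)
The plan is to reduce the claim to the classical one-dimensional identity of Getoor and then to exploit the product structure of $L$ in~\eqref{RI}. Write $\psi(t)=(1-t^2)^s_+$ and recall the well known fact (see~\cite{getoor,SRB}; note also that for $n=1$ the operator in~\eqref{RI} is just the one-dimensional $(-\Delta)^s$, for which the statement is already quoted in the Introduction) that there is a constant $c_0>0$, depending only on $s$, with
\[
(-\partial^2)^s\psi(t)=c_0 \qquad\text{for every } t\in(-1,1).
\]
Since $\phi(y)=(1-|y|^2)^s$ is smooth on $B_1$ and bounded on $\R^n$, the quantity $L\phi(x)$ is well defined pointwise for $x\in B_1$, and by the very form of $L$ in~\eqref{RI} we have $L\phi(x)=\sum_{i=1}^n(-\partial^2_i)^s\phi(x)$, where $(-\partial^2_i)^s\phi(x)$ stands for the one-dimensional fractional Laplacian of the function $\tau\mapsto\phi(x+\tau e_i)$, evaluated at $\tau=0$.

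Next I would carry out the key reduction, one direction at a time. Fix $x\in B_1$ and $i\in\{1,\dots,n\}$, and split $x=x_ie_i+x'$ with $x'$ orthogonal to $e_i$; set $a=\sqrt{1-|x'|^2}$. Since $x\in B_1$ we have $|x'|<1$, so $a>0$, and moreover $x_i^2=|x|^2-|x'|^2<1-|x'|^2=a^2$, so that $x_i/a\in(-1,1)$. A direct manipulation of the positive part gives
\[
\phi(x+\tau e_i)=\bigl(1-|x'|^2-(x_i+\tau)^2\bigr)^s_+=a^{2s}\,\psi\!\left(\frac{x_i+\tau}{a}\right),
\]
so the transverse variable $x'$ enters only through the dilation factor $a$. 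Using the translation invariance and the scaling property of the one-dimensional fractional Laplacian — namely $(-\partial^2)^s[g(\cdot/a)](t)=a^{-2s}\bigl((-\partial^2)^sg\bigr)(t/a)$, which follows immediately from the integral representation by the change of variables $\rho\mapsto a\rho$ — together with the Getoor identity (applicable because $x_i/a\in(-1,1)$), we obtain
\[
(-\partial^2_i)^s\phi(x)=a^{2s}\cdot a^{-2s}\,\bigl((-\partial^2)^s\psi\bigr)(x_i/a)=c_0 .
\]

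Summing over $i=1,\dots,n$ then yields $L\phi(x)=n\,c_0=:c$ for every $x\in B_1$, with $c$ depending only on $n$ and $s$; and $c>0$ since $c_0>0$ (for instance, evaluating at $t=0$ gives $(-\partial^2)^s\psi(0)=4\int_0^{+\infty}\rho^{-1-2s}\bigl(1-(1-\rho^2)^s_+\bigr)\,d\rho$, whose integrand is strictly positive). I expect the only genuinely non-elementary ingredient to be the one-dimensional Getoor identity itself; were one to insist on a self-contained argument, one would have to compute $(-\partial^2)^s(1-t^2)^s_+$ explicitly (e.g.\ via the Poisson kernel of a half-line or a Beta-function identity), which is the real content. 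The remaining steps are the tensorization-and-scaling bookkeeping, and the point I would check most carefully is precisely that $x_i/a$ always lands in the open interval $(-1,1)$ where the identity is valid, as verified above.
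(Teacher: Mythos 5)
Your proof is correct and follows the same route as the paper: both reduce each one-dimensional fractional Laplacian of $\phi$ along a coordinate direction to the Getoor identity $(-\partial^2)^s(1-t^2)^s_+ \equiv c_0$ on $(-1,1)$, using the factorization $\phi(x+\tau e_i)=a^{2s}\psi\bigl((x_i+\tau)/a\bigr)$ with $a=\sqrt{1-|x'|^2}$ and the change of variables $\rho\mapsto a\rho$ (which you package as the scaling property of $(-\partial^2)^s$, while the paper carries out the substitution directly in the integral). The bookkeeping, including the check that $x_i/a\in(-1,1)$, matches the paper's.
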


\begin{proof} Fix~$x=(x',x_n)\in B_1$, and let~$b=\sqrt{1-|x'|^2}$
and~$\xi=x_n/b$. Notice that~$|x'|^2+x_n^2< 1$, thus
\begin{equation}\label{98iuhg45678} |\xi|<\frac{\sqrt{1-|x'|^2}}{b}=1.\end{equation}
Moreover, for any~$\rho\in\R$,
$$ \phi(x+\rho e_n)=(1-|x'|^2-|x_n+\rho|^2)^s_+
=(b^2-|b\xi+\rho|^2)^s_+ = b^{2s} (1-|\xi+b^{-1}\rho|^2)^s_+.$$
As a consequence, writing~$\phi_o(\zeta) =(1-|\zeta|^2)_+^s$, for any~$\zeta\in\R$, and
using the change of variable~$t=b^{-1}\rho$,
\begin{eqnarray*}
(-\partial^2_{n})^s\phi(x) &=&
\int_{\R}\frac{2\phi(x)-\phi(x+\rho e_n)-\phi(x-\rho e_n)}{\rho^{1+2s}}\,d\rho
\\&=& b^{2s}
\int_{\R}\frac{2(1-|\xi|^2)^s_+
-(1-|\xi+b^{-1}\rho|^2)^s_+ -
(1-|\xi-b^{-1}\rho|^2)^s_+
}{\rho^{1+2s}}\,d\rho \\
&=&
\int_{\R}\frac{2(1-|\xi|^2)^s_+
-(1-|\xi+t|^2)^s_+ -
(1-|\xi-t|^2)^s_+
}{t^{1+2s}}\,dt
\\ &=& (-\partial^2_{n})^s\phi_o(\xi).\end{eqnarray*}
Now, we point out that~$\phi_o$ is a function of one variable,
and~$(-\partial^2_{n})^s\phi_o =c_o$, for some~$c_o>0$, see e.g.~\cite{getoor}.
Thus, recalling~\eqref{98iuhg45678}, we have that~$(-\partial^2_{n})^s\phi(x)=c_o$.
By exchanging the roles of the variables, we obtain similarly that
$$ (-\partial^2_{1})^s\phi(x)=(-\partial^2_{2})^s\phi(x)=
\cdots=(-\partial^2_{n})^s\phi(x)=c_o,$$ from which we obtain the desired result.
\end{proof}

With this, we can now construct our counterexample, by considering the planar
domain~$\Omega\supset B_4$
in Figure~A.\medskip

\begin{figure}
\begin{center}
\includegraphics[width=.85\textwidth,height=.85\textheight,keepaspectratio]{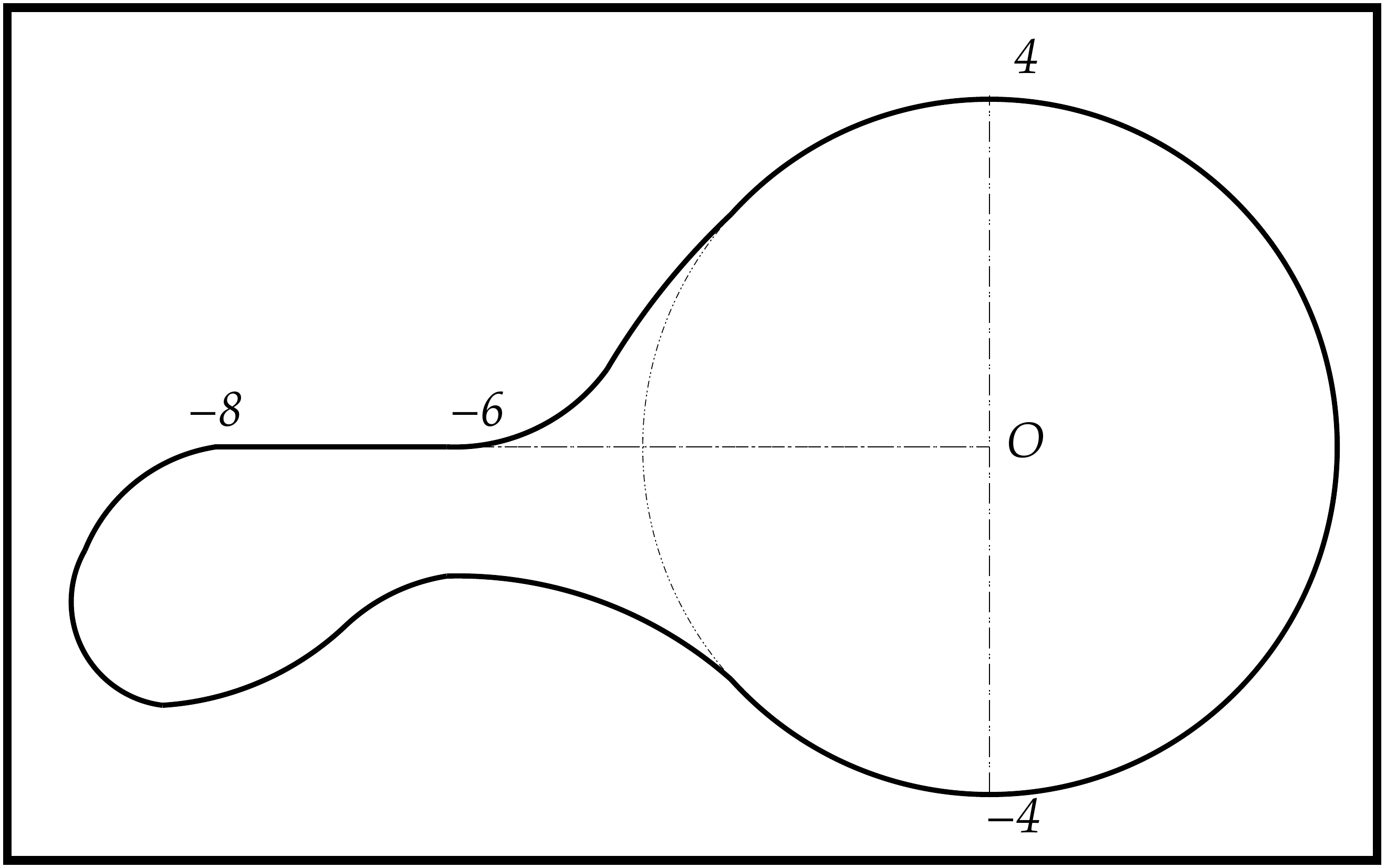}
\\{\footnotesize{{\sc Figure A.} The domain of Theorem~\ref{MAIN2}.}}
\end{center}
\medskip
\end{figure}

\begin{proof}[Proof of Theorem~\ref{MAIN2}]
The fact that~$u\in C^s(\overline\Omega)$ follows from Proposition~4.6
of~\cite{SROP}.
Now suppose, by contradiction, that~$u\in C^{3s+\epsilon}_{\rm loc}(\Omega)$.
Let~$\theta\in C^\infty_0 (B_2)$ with~$\theta\equiv1$ in~$B_1$,
and let~$v=\theta u$ and~$w=u-v$. Then~$v\in C^{3s+\epsilon}(\R^n)$
and so, by Lemma~\ref{0sdfuvetryu45t} (being~$B_1$ there
any ball in~$\R^n$), we have that~$Lv\in C^{s+\epsilon}(\R^n)$.
Hence
\begin{equation} \label{0wdff432gh5trf3edaa}
Lw = 1 -Lv \in C^{s+\epsilon}(\R^n).
\end{equation}
Now, we take~$\eta\in(0,1/2)$ and we evaluate~$Lw$
at the two points~$x_1=(0,0)$ and~$x_2=(0,-\eta)$.
We notice that~$w=(1-\theta)u$, so~$w\equiv0$ in~$B_1$, hence~$w(x_1)=w(x_2)=0$
(as well as~$w(x_2+\rho\omega)=w(x_2-\rho\omega)=0$ if~$\rho\in[0,1/2]$
and~$\omega\in S^{n-1}$), and
$$ Lw(x_1)-Lw(x_2)=
\int_{1/2}^{+\infty}\,d\rho\,
\int_{S^{n-1}}\,da(\omega)\, \frac{w(x_2+\rho\omega)+w(x_2-\rho\omega)
-w(\rho\omega)-w(-\rho\omega)}{\rho^{1+2s}}.$$
More precisely, since~$L$ has the form~\eqref{RI},
the anisotropy takes the form in~\eqref{8isdfv-0bg}
and we obtain that
\begin{equation}
\begin{split}& Lw(x_1)-Lw(x_2)\\&\quad=2\sum_{i=1}^2
\int_{\{|\rho|\ge1/2\}}\,d\rho
\, \frac{w(x_2+\rho e_i)
-w(\rho e_i)}{|\rho|^{1+2s}}
\\&\quad=J_1+J_2,\end{split}\end{equation}
where
\begin{equation*}
\begin{split}
&J_1=2\int_{\{|\rho|\ge1/2\}}
 \frac{w(\rho,-\eta)
-w(\rho,0)}{|\rho|^{1+2s}}\,d\rho
\\ {\mbox{and }}\quad&
J_2=2
\int_{\{|\rho|\ge1/2\}}
 \frac{w(0,\rho -\eta)
-w(0,\rho)}{|\rho|^{1+2s}}\,d\rho
.\end{split}\end{equation*}
Though the integrals $J_1$ and~$J_2$ may look alike at a first glance,
they are geometrically very different: indeed
the integral trajectory of~$J_2$ is transverse to the
boundary (hence the interior regularity will dominate
the boundary effects), while the integral trajectory of~$J_1$
sticks at the boundary (hence it makes propagate
the singularity from the boundary towards the interior).
As an effect of these different geometric behaviors, we will prove
that
\begin{equation}\label{CE-1} J_1\ge C^{-1}\eta^s
\; {\mbox{and }}\;
|J_2|\le C\eta^{s+\epsilon},
\end{equation}
for some~$C>1$ (here~$C$ will denote a quantity, possibly varying
from line to line, which may depend on~$u$, but
which is independent of~$\eta$).
To prove~\eqref{CE-1}
it is useful
to observe that, since~$w$ vanishes outside $\Omega$,
the denominator in the integrands defining~$J_1$ and~$J_2$
is bounded uniformly and bounded uniformly away from zero.
We set~$\beta=2s/(1+2s)$ and we notice that~$\beta\in(0,1)$.
If~$\rho\in [-4,-4+\eta^\beta]\cup[4-\eta^\beta,4]$,
we use that~$w\in C^s(\R^n)$ to obtain that
$$ |w(0,\rho+\eta)-w(0,\rho)|\le C\eta^s,$$
and so
\begin{equation}\label{8iyhjvbbfds}
|J_2|\le C\eta^{\beta+s}
+C \int_{\{|\rho|\in[\frac{1}{2}, 4-\eta^{\beta}]\}}
|w(0,\rho +\eta)-w(0,\rho)|\,d\rho
 .\end{equation}
Here, we have also used that $w(0,\rho)=0$ for $|\rho|>4$.
Also, if~$|\rho|\le 4-
\eta^{\beta}$,
we have that~$\dd\big((0,\rho +\eta),(0,\rho)\big)\ge \eta^{\beta}-\eta\ge
\eta^{\beta}/2$, if~$\eta$ is small enough.
Hence we use Theorem~1.1(b) in~\cite{SROP} (applied here with~$\alpha=s$)
and~\eqref{0wdff432gh5trf3edaa}
to see that
\begin{eqnarray*}
|w(0,\rho +\eta)-w(0,\rho +\eta)|&\le& C\,\big(
\|w\|_{C^s(\R^n)}+\|Lw\|_{C^s(\Omega)}\big)
\,\eta^{3s}
\dd^{-2s}\big( (0,\rho +\eta),(0,\rho)\big)
\\ &\le& C\eta^{3s-2s\beta}\end{eqnarray*}
and therefore
$$\int_{\{|\rho|\in[\frac{1}{2}, 4-\eta^{\beta}]\}}
|w(0,\rho +\eta)-w(0,\rho)| \,d\rho
\le C\eta^{3s-2s\beta}.$$
This and~\eqref{8iyhjvbbfds}
imply that
\begin{equation}\label{8iyhjvbbfds-BIS}
|J_2|\le
C\eta^{\beta+s}+C \eta^{3s-2s\beta}
=C\eta^{\beta+s}.\end{equation}
This estimates~$J_2$. Now we estimate~$J_1$. To this goal, when~$\rho
\in [-8-\sqrt[8]\eta,-8]\cup[-6,-6-\sqrt[8]{\eta}]\cup[4-\sqrt[8]\eta,4]$
we use again that~$w\in C^s(\R^n)$ to
obtain that~$|w(\rho,-\eta)
-w(\rho,0)|\le C\eta$, and so
\begin{equation}\label{J1-1}
\left|\int_{\{\rho \in[-8-\sqrt[8]\eta,-8]\cup
[-6,-6+\sqrt[8]\eta]\cup[4-\sqrt[8]\eta,4]\}}
 \frac{w(\rho,-\eta)
-w(\rho,0)}{|\rho|^{1+2s}}\,d\rho
\right| \le C\eta^{s+\frac{1}{8}}.
\end{equation}
Furthermore, if~$\rho\in [-20,-8-\sqrt[8]\eta]\cup
[-6+\sqrt[8]\eta,4-\sqrt[8]\eta]$
we have that
$$\dd\big( (\rho,-\eta),(\rho,-\eta)\big)\ge \sqrt\eta$$
if~$\eta$ is small enough, and thus, by
Theorem~1.1(b) in~\cite{SROP},
we see that
\begin{eqnarray*}
|w(\rho,-\eta)-w(\rho,0)|&\le& C\,\big(
\|w\|_{C^s(\R^n)}+\|Lw\|_{C^s(\Omega)}\big)
\,\eta^{3s}
\dd^{-2s}\big( (0,\rho +\eta),(0,\rho)\big)
\\ &\le& C\eta^{2s}\end{eqnarray*}
and therefore
\begin{equation}\label{J1-2}
\left|\int_{\{\rho \in[-20,-8-\sqrt[8]\eta]
\cup[-6+\sqrt[8]\eta,4-\sqrt[8]\eta]\}}
 \frac{w(\rho,-\eta)
-w(\rho,0)}{|\rho|^{1+2s}}\,d\rho
\right| \le C\eta^{2s}.
\end{equation}
To complete the estimate on~$J_1$,
in virtue of~\eqref{J1-1}
and~\eqref{J1-2}, it only remains to consider the
case in which~$\rho\in [-8,-6]$.
For this,
if~$\rho\in [-8,-6]$, we have that~$w(\rho,0)=0$,
and we claim that
\begin{equation}\label{TT}
w(\rho,-\eta)\ge c \eta^s,
\end{equation}
for some~$c>0$. To check this, fix~$\rho\in [-8,-6]$.
We notice that there exists~$r_o>0$ (independent of~$\rho$)
such that the ball~$B_{r_o}(\rho,-r_o)$ is tangent
to~$\partial\Omega$ at~$(\rho,0)$.
We consider the function~$\phi$ given by Lemma~\ref{phi}
(used here with~$n=2$),
and we set
$$\underline\phi(x)=\phi_o\big( r_o^{-1} (x-(\rho,-r_o))\big)
=\big(1- r_o^{-2} |x-(\rho,-r_o)|^2\big)_+^s.$$
Exploiting Lemma~\ref{phi} and the Comparison Principle,
we obtain that~$u(x)\ge c \underline\phi(x)$ for any~$x\in
B_{r_o}(\rho,-r_o)$,
for some~$c>0$. So, choosing~$x=(\rho,-\eta)$, we have that
$$ |x-(\rho,-r_o)|^2= |(0,r_o-\eta)|^2 =r_o^2 +\eta^2 -2r_o\eta
\leq r_o^2 -r_o\eta,$$
if~$\eta$ is small enough. So we obtain
$$ u(\rho,-\eta)\ge c
\big(1- r_o^{-2} |x-(\rho,-r_o)|^2\big)_+^s\ge c
\big(1- r_o^{-2} (r_o^2 -r_o\eta)\big)_+^s = c r_o^{-s}\eta^s.$$
Since~$u(\rho,-\eta)=w(\rho,-\eta)$, thanks to the choice of the cutoff
functions,
the latter formula proves~\eqref{TT}, up to renaming constants.

Thus, using~\eqref{TT} (and possibly renaming~$c$
once again), we obtain that
$$ \int_{\{\rho \in[-6,-8]\}}
\frac{w(\rho,-\eta)
-w(\rho,0)}{|\rho|^{1+2s}}\,d\rho=
\int_{\{\rho \in[-6,-8]\}}
\frac{w(\rho,-\eta)}{|\rho|^{1+2s}}\,d\rho
\geq c\eta^s.$$
Consequently, recalling~\eqref{J1-1} and~\eqref{J1-2},
$$ J_1\ge c\eta^s-C\eta^{s+\frac{1}{8}}-C\eta^{2s},$$
that gives~$J_1\ge c\eta^s$, up to renaming constants.
This and~\eqref{8iyhjvbbfds-BIS} complete the proof of~\eqref{CE-1}.

Now, by~\eqref{0wdff432gh5trf3edaa}
and~\eqref{CE-1}, we obtain that
$$ C\ge \frac{Lw(x_1)-Lw(x_2)}{|x_1-x_2|^{s+\epsilon}}\ge
\eta^{-s-\epsilon}\big( J_1-|J_2|\big)\ge \eta^{-s-\epsilon}
\big( C^{-1}\eta^s-C\eta^{s+\epsilon}\big)= \frac{C^{-1}}{\eta^{\epsilon}}-C.$$
This is a contradiction if~$\eta$ is sufficiently small
(possibly in dependence of the fixed~$\epsilon>0$). This shows that~$u$
cannot belong to~$C^{3s+\epsilon}_{\rm loc}(\Omega)$ and so
the construction of the counterexample
in Theorem~\ref{MAIN2} is complete.
\end{proof}

\begin{appendix}

\section{Some basic results about the level sets
of the distance function in~$C^{1,1}$ domains}

The goal of this appendix is to give some ancillary
operational results about the distance function
from the boundary of~$C^{1,1}$ domains.
The topic is of course of classical flavor,
and the literature is rich of results
in even more general settings (see e.g.~\cite{FED}),
but we thought it was useful to have the results
needed for our scope at hand, and with proofs that do
not involve any fine argument from Geometric Measure Theory.

The following result states that a $C^{1,1}$ domain satisfies
an inner sphere condition, in a uniform way.
This is probably well known, but we give the details for the
convenience of the reader:

\begin{lem}\label{WK}
Let~$\kappa,K>0$ and~$\Omega\subset\R^n$ be such that
\begin{equation}\label{A1F}
\Omega\cap B_{2\kappa}=\{ x=(x',x_n)
\in B_{2\kappa} {\mbox{ s.t. }}x_n>h(x')\},\end{equation}
for a $C^{1,1}$ function~$h:\R^{n-1}\to\R$,
with~$\|\nabla h\|_{C^{0,1}(\R^n)}\le K$.

Then, there exists~$C>0$, only depending on~$n$ such that
each point of~$(\partial\Omega)\cap B_\kappa$
is touched from the interior by balls of radius
$$ r=\frac{1}{2}\min\left\{ \kappa,\, K^{-1}\right\}.$$

More explicitly, for any~$p\in (\partial\Omega)\cap B_\kappa$
there exists~$q\in\R^n$ such that~$B_r(q)\subseteq\Omega\cap B_{2\kappa}$
and~$p\in\partial B_r(q)$.

The point $q$ is given explicitly by the formula
$$ q=p-\frac{r\,(\nabla h(p'),-1)}{\sqrt{|\nabla h(p')|^2+1}}.$$
\end{lem}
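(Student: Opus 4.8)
The plan is to verify directly that the point $q$ written in the statement does the job, by establishing three facts about the open ball $B_r(q)$: that it lies in $B_{2\kappa}$, that it is disjoint from the graph $\{x_n=h(x')\}$, and that it contains the interior point $q$. By~\eqref{A1F} these three facts force $B_r(q)\subseteq\Omega\cap B_{2\kappa}$, while $p\in\partial B_r(q)$ is automatic. First I would observe that, writing $a=\nabla h(p')$ and $N=\sqrt{1+|a|^2}$, the stated point is $q=p+r\nu$ with $\nu=(-a,1)/N$ the inner unit normal at $p=(p',h(p'))$; hence $|p-q|=r$, so $p\in\partial B_r(q)$. Since $p\in B_\kappa$ and $r\le\kappa/2$, we get $|q|\le|p|+r<3\kappa/2$, so every point of $B_r(q)$ has norm $<2\kappa$, i.e. $B_r(q)\subseteq B_{2\kappa}$. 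By~\eqref{A1F}, inside $B_{2\kappa}$ one has $\partial\Omega=\{x_n=h(x')\}$, so what remains is to estimate the distance from $q$ to this graph and to check that $q_n>h(q')$.

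The core computation is the distance estimate. For $u\in\R^{n-1}$ set $t=h(p'+u)-h(p')$ and expand
\[
f(u):=\bigl|q-\bigl(p'+u,\,h(p'+u)\bigr)\bigr|^2
=\Bigl|u+\tfrac{r}{N}a\Bigr|^2+\Bigl(\tfrac{r}{N}-t\Bigr)^2 .
\]
Using $\bigl|\tfrac{r}{N}a\bigr|^2+\bigl(\tfrac{r}{N}\bigr)^2=\tfrac{r^2(|a|^2+1)}{N^2}=r^2$, this simplifies to
\[
f(u)=r^2+|u|^2+t^2+\tfrac{2r}{N}\bigl(a\cdot u-t\bigr).
\]
The hypothesis $\|\nabla h\|_{C^{0,1}}\le K$ yields the one–sided Taylor bound $t\le a\cdot u+\tfrac K2|u|^2$, hence $a\cdot u-t\ge-\tfrac K2|u|^2$ and therefore
\[
f(u)-r^2\ge\Bigl(1-\tfrac{rK}{N}\Bigr)|u|^2+t^2 .
\]
Since $rK\le\tfrac12$ (as $r\le\tfrac12K^{-1}$) and $N\ge1$, the coefficient of $|u|^2$ is at least $\tfrac12$, so $f(u)\ge r^2$ for every $u$, with equality exactly at $u=0$. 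Because $f(0)=r^2$, this says precisely that ${\rm dist}\bigl(q,\{x_n=h(x')\}\bigr)=r$, attained only at $p$; in particular the open ball $B_r(q)$ contains no point of the graph, and combined with $B_r(q)\subseteq B_{2\kappa}$ this gives $B_r(q)\cap\partial\Omega=\varnothing$.

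Finally I would check $q\in\Omega$ by the same kind of one–sided estimate applied to $q'=p'-\tfrac{r}{N}a$: since $h(q')\le h(p')+a\cdot(q'-p')+\tfrac K2|q'-p'|^2=h(p')-\tfrac{r|a|^2}{N}+\tfrac{Kr^2|a|^2}{2N^2}$ and $q_n=h(p')+\tfrac rN$, one gets $q_n-h(q')\ge rN-\tfrac12Kr^2\ge\tfrac34 r>0$, so $q\in\Omega\cap B_{2\kappa}$. Now $B_r(q)$ is connected, meets $\Omega$ at $q$, and is disjoint from $\partial\Omega$; hence the partition of $B_r(q)$ into the open sets $B_r(q)\cap\Omega$ and $B_r(q)\setminus\overline\Omega$ forces the latter to be empty, i.e. $B_r(q)\subseteq\Omega$, whence $B_r(q)\subseteq\Omega\cap B_{2\kappa}$. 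Together with $p\in\partial B_r(q)$ and the identification of $q$ with the formula in the statement, this completes the proof. The one genuinely delicate point is the identity for $f$ and the inequality drawn from it: one must use the semiconcavity information contained in $\|\nabla h\|_{C^{0,1}}\le K$, and the choice $r\le\tfrac12K^{-1}$ is exactly what absorbs the error term $\tfrac{rK}{N}|u|^2$; everything else is bookkeeping with the localization radius $2\kappa$.
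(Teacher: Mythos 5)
Your proof is correct, and it follows a genuinely different route from the paper's. The paper fixes a generic point $x\in B_r(q)$ and proves $x_n>h(x')$ directly, in two steps: first it establishes the tangent-plane inequality $\xi(x):=p_n+\nabla h(p')\cdot(x'-p')\le x_n$ by exploiting convexity of the ball (the segment $p+t(x-p)$ stays in $B_r(q)$) and passing to the limit $t\to 0^+$ in a quadratic inequality; then it feeds the $t=1$ case of the same inequality into a one-sided Taylor bound to conclude $h(x')<x_n$. You instead parametrize the graph, compute the squared distance
\[
f(u)=\bigl|q-(p'+u,h(p'+u))\bigr|^2
\]
and obtain the closed-form identity $f(u)=r^2+|u|^2+t^2+\tfrac{2r}{N}(a\cdot u-t)$; the one-sided Taylor bound then gives $f(u)-r^2\ge(1-\tfrac{rK}{N})|u|^2+t^2\ge\tfrac12|u|^2$, showing at one stroke that $\mathrm{dist}(q,\{x_n=h(x')\})=r$, attained only at $p$. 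Your route is algebraically cleaner --- a single identity does all the work, and the role of the choice $r\le\tfrac12 K^{-1}$ is completely transparent --- at the modest cost of an extra bookkeeping step: since you only show $B_r(q)$ avoids the graph, you must separately verify $q\in\Omega$ and invoke connectedness to place the whole ball on the correct side, whereas the paper's argument lands directly on $x_n>h(x')$ for every $x\in B_r(q)$. Both computations are correct; note that the paper uses the slightly weaker Taylor constant $K$ where you use the sharp $K/2$, but both suffice under the hypothesis $2Kr\le 1$.
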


\begin{proof} Let~$p=(p',p_n)=(p',h(p'))\in(\partial\Omega)\cap B_\kappa$.
By construction~$|p-q|^2=r^2$, hence
\begin{equation}\label{09dsa}
p\in\partial B_r(q).\end{equation}
Moreover, if~$x\in B_r(q)$ then
$$|x|\le |x-q|+|q-p| + |p|< 2r+\kappa\le 2\kappa,$$
hence~$B_r(q)\subseteq B_{2\kappa}$.

Therefore, recalling~\eqref{A1F},
in order to show that~$B_r(q)\subseteq \Omega$,
it suffices to prove that
\begin{equation}\label{TP0}
B_r(q)\subseteq
\{ x=(x',x_n)
\in \R^n {\mbox{ s.t. }}x_n>h(x')\}
\end{equation}
To prove this, let
\begin{equation}\label{TP06789}
x\in B_r(q)\end{equation} and define
$$ \xi(x)=p_n +\nabla h(p')\cdot (x'-p').$$
We claim that
\begin{equation}\label{9dfgnb4e5d}
\xi(x)\le x_n.	
\end{equation}
To check this, we use~\eqref{09dsa}, \eqref{TP06789}
and the convexity of the ball to see that, for any~$t\in(0,1]$,
$$ {B_r(q)}\ni p+t(x-p)=
q+\frac{r\,(\nabla h(p'),-1)}{\sqrt{|\nabla h(p')|^2+1}} + t(x-p).$$
As a consequence
\begin{equation}\label{787778}\begin{split}
r^2 \,&> \left|
\frac{r\,(\nabla h(p'),-1)}{\sqrt{|\nabla h(p')|^2+1}} + t(x-p)\right|^2
\\ &=
r^2 + t^2 |x-p|^2
+
\frac{2rt\,(\nabla h(p'),-1)\cdot(x-p)}{\sqrt{|\nabla h(p')|^2+1}} \\
&=r^2 + t^2 |x-p|^2 +\frac{2rt}{\sqrt{|\nabla h(p')|^2+1}}
\big( \nabla h(p')\cdot (x'-p')-(x_n-p_n)\big) \\
&= r^2 + t^2 |x-p|^2 +\frac{2rt}{\sqrt{|\nabla h(p')|^2+1}}
\big( \xi(x)-x_n\big).
\end{split}\end{equation}
Simplifying~$r^2$ to both the terms, multiplying by~$t^{-1}
\sqrt{|\nabla h(p')|^2+1}$ and taking the limit in~$t$, we deduce that
\begin{eqnarray*}
0 &\ge& \lim_{t\to0^+}
t|x-p|^2  \sqrt{|\nabla h(p')|^2+1}
+2r \big( \xi(x)-x_n\big)\\
&=& 2r \big( \xi(x)-x_n\big).\end{eqnarray*}
This proves~\eqref{9dfgnb4e5d}
and we can now continue the proof of~\eqref{TP0}.

To this goal, we use again~\eqref{787778}, here with~$t=1$,
to observe that
\begin{eqnarray*}
0 &> &
|x-p|^2 +\frac{2r}{\sqrt{|\nabla h(p')|^2+1}}
\big( \xi(x)-x_n\big) \\
&\ge&|x'-p'|^2 +\frac{2r}{\sqrt{|\nabla h(p')|^2+1}}
\big( \xi(x)-x_n\big).
\end{eqnarray*}
As a consequence,
\begin{equation}\label{98vfdccg87}\begin{split}
h(x')-x_n \,=\;& h(x')-h(p') +p_n -x_n\\
\le\;& \nabla h(p')\cdot (x'-p')+\|\nabla h\|_{C^{0,1}(\R^n)} |x'-p'|^2+p_n-x_n
\\ \le\;& \xi(x)-x_n +K|x'-p'|^2\\
<\;& \xi(x)-x_n +\frac{2Kr}{\sqrt{|\nabla h(p')|^2+1}}
\big( x_n-\xi(x)\big)
\\=\;&
\big( x_n-\xi(x)\big)\,\left( \frac{2Kr}{\sqrt{|\nabla h(p')|^2+1}}-1\right)
\end{split}\end{equation}
Furthermore,
$$ \frac{2Kr}{\sqrt{|\nabla h(p')|^2+1} }\le
2Kr\le 1.$$
By inserting this and~\eqref{9dfgnb4e5d} into~\eqref{98vfdccg87}
we conclude that~$h(x')-x_n <0$.
This completes the proof of~\eqref{TP0} and thus of Lemma~\ref{WK}.
\end{proof}

As a consequence of Lemma~\ref{WK}, we obtain that
the level sets of the distance function from a~$C^{1,1}$ domain
are locally a Lipschitz graph:

\begin{cor}\label{CR}
Under the assumptions of Lemma~\ref{WK},
there exists~$\kappa_*\in(0,\kappa]$, $K_*\ge K>0$ only depending
on~$n$, $\kappa$ and~$K$
such that for any~$t\in(0,\kappa_*]$ the level set
\begin{equation}\label{9gbf6thj}
\{ x=(x',x_n)
\in B_{\kappa_*} {\mbox{ s.t. }}\dd(x)=t\}\end{equation}
lies in the graph of a Lipschitz function, whose Lipschitz seminorm is bounded
by~$K_*$.
\end{cor}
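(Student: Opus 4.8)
The plan is to parametrise each level set $\{\dd=t\}$, for $t$ small, by the horizontal coordinate of a nearest boundary point, and to verify that the resulting parametrisation is a Lipschitz graph. I work in the coordinates of Lemma~\ref{WK}; after a vertical translation I may assume $h(0)=0$, so that (as $\|\nabla h\|_{C^{0,1}}\le K$) both $h$ and $\nabla h$ are $K$-Lipschitz on $B'_{2\kappa}:=\{p'\in\R^{n-1}:|p'|<2\kappa\}$ and $(\partial\Omega)\cap B_{2\kappa}=\{x_n=h(x')\}\cap B_{2\kappa}$. For $p'\in B'_{2\kappa}$ let $\nu(p')=(1+|\nabla h(p')|^2)^{-1/2}(-\nabla h(p'),1)$ be the inner unit normal at $(p',h(p'))$; Lemma~\ref{WK} gives $B_r\big((p',h(p'))+r\,\nu(p')\big)\subseteq\Omega$ with $r=\tfrac12\min\{\kappa,K^{-1}\}$. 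Define
\[
\Phi(p',t)=(p',h(p'))+t\,\nu(p')=\big(\psi_t(p'),\varphi_t(p')\big),\qquad
\psi_t(p')=p'-\frac{t\,\nabla h(p')}{\sqrt{1+|\nabla h(p')|^2}},\qquad
\varphi_t(p')=h(p')+\frac{t}{\sqrt{1+|\nabla h(p')|^2}} .
\]
The threshold $\kappa_*\in(0,\kappa]$, depending only on $n,\kappa,K$, is fixed at the end; throughout $t\in(0,\kappa_*]$.

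The first and main step is: if $x\in\Omega\cap B_{\kappa_*}$ and $\dd(x)=t$, then $x=\Phi(p',t)$ for some $p'\in B'_{2\kappa}$. Since $h(0)=0$ we have $\dd(x)\le x_n-h(x')\le|x|+K|x'|<(1+K)\kappa_*$, so any nearest boundary point $p$ of $x$ satisfies $|p|\le|x|+\dd(x)<(2+K)\kappa_*$, which is $<2\kappa$ once $\kappa_*$ is small; hence $p=(p',h(p'))$ with $p'\in B'_{2\kappa}$ by~\eqref{A1F}. As $p$ minimises $q\mapsto|x-q|$ over the $C^1$ hypersurface $\{x_n=h(x')\}$, the vector $x-p$ is orthogonal to $T_p(\partial\Omega)$, the graph of the linear map $\tau\mapsto\nabla h(p')\cdot\tau$; therefore $x-p=\lambda\,\nu(p')$ with $|\lambda|=t$. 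A one-line expansion of $h$ at $p'$ shows that $p+s\,\nu(p')\in\Omega$ for $s\in(0,2/K)$ and $p+s\,\nu(p')\notin\Omega$ for $s\in(-2/K,0)$; applied to the midpoint $p+\tfrac{\lambda}{2}\nu(p')$, which lies in the open ball $B_t(x)$ and hence in $\Omega$ (that ball meets no point of $\partial\Omega$, and being connected and meeting $\Omega$ it lies in $\Omega$), and using $|\lambda|/2\le\kappa_*/2<2/K$, this forces $\lambda>0$; thus $\lambda=t$ and $x=\Phi(p',t)$. (Conversely $B_t(\Phi(p',t))\subseteq B_r((p',h(p'))+r\nu(p'))\subseteq\Omega$ for $t\le r$, so $\dd(\Phi(p',t))=t$; only the displayed inclusion is needed here.)

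The second step is elementary. The map $v\mapsto v(1+|v|^2)^{-1/2}$ is the gradient of the convex function $v\mapsto\sqrt{1+|v|^2}$, whose Hessian has operator norm $\le1$, so it is $1$-Lipschitz; composing with the $K$-Lipschitz map $\nabla h$ shows that $p'\mapsto t\,\nabla h(p')/\sqrt{1+|\nabla h(p')|^2}$ is $tK$-Lipschitz. Hence, if $\kappa_*\le 1/(2K)$, then for $t\le\kappa_*$ and $p_1',p_2'\in B'_{2\kappa}$,
\[
|\psi_t(p_1')-\psi_t(p_2')|\ge(1-tK)|p_1'-p_2'|\ge\tfrac12|p_1'-p_2'| ,
\]
so $\psi_t$ is injective on $B'_{2\kappa}$ with a $2$-Lipschitz inverse on its image; and $\varphi_t$ is Lipschitz on $B'_{2\kappa}$ with a constant $C_\sharp=C_\sharp(n,\kappa,K)$, so $x'\mapsto\varphi_t(\psi_t^{-1}(x'))$ is Lipschitz with constant $K_*:=\max\{K,2C_\sharp\}$. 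By the first step, the level set~\eqref{9gbf6thj} is contained in $\{(\psi_t(p'),\varphi_t(p')):p'\in B'_{2\kappa}\}$, which---$\psi_t$ being injective---is exactly the graph of that Lipschitz function. This proves the Corollary with $\kappa_*=\min\{\kappa/(2+K),1/(2K)\}$ and $K_*$ as above.

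The step I expect to be most delicate is the first one: showing that a point at distance exactly $t$ from $\partial\Omega$ must lie on the inward normal ray issued by a nearest boundary point, in the explicit direction $\nu(p')$ of Lemma~\ref{WK}. This uses only the $C^1$ character of $\partial\Omega$ (for the orthogonality) together with the elementary expansion of $h$ that identifies the interior side, so no tool from Geometric Measure Theory enters; the remainder is the routine fact that a gradient map with bounded Hessian is a small Lipschitz perturbation of the identity once $t\lesssim 1/K$.
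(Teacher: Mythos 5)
Your proof is correct, and it takes a genuinely different route from the paper's. The paper first establishes the graph property (existence and uniqueness of $x_n(x',t)$) by noting that $\partial_n\dd = \nu_n\circ\pi \ge (K^2+1)^{-1/2}>0$ near the boundary, and then obtains the Lipschitz bound by invoking the semiconcavity of the distance function (citing Cannarsa--Sinestrari, Prop.~2.2.2(iii)) and manipulating the resulting inequality at a nearest boundary point. You instead parametrise the level set explicitly by the nearest-boundary-point map $\Phi(p',t)=(p',h(p'))+t\,\nu(p')$, verify via first-order optimality and the tubular-neighbourhood bound from Lemma~\ref{WK} that this parametrisation captures every point of $\{\dd=t\}\cap B_{\kappa_*}$, and then observe that the horizontal component $\psi_t$ is a small Lipschitz perturbation of the identity (because $v\mapsto v(1+|v|^2)^{-1/2}$ is $1$-Lipschitz), hence bi-Lipschitz for $t\lesssim 1/K$. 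This avoids the external semiconcavity input entirely and is more constructive: it yields the Lipschitz graph directly as $\varphi_t\circ\psi_t^{-1}$ with explicit control on the constant. The paper's argument is shorter at the cost of citing a non-elementary fact; yours is slightly longer but self-contained. Both give $\kappa_*$ and $K_*$ depending only on $n,\kappa,K$, as required.

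One small point worth making explicit if you wished to polish the write-up: the Lipschitz function $\varphi_t\circ\psi_t^{-1}$ is a priori defined only on $\psi_t(B'_{2\kappa})$; to speak of ``the graph of a Lipschitz function'' over a ball one should note that it extends to all of $\mathbb R^{n-1}$ with the same (up to dimensional constant) Lipschitz seminorm by McShane or Kirszbraun extension. This is cosmetic and does not affect the validity of the argument, since containment of the level set in a Lipschitz graph is all that Proposition~\ref{appa} uses.
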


\begin{proof} First all all,
we show that for any~$x'\in\R^n$ with~$|x'|\le \kappa_*$
there exists a unique~$x_n(x',t)\in\R$ such that~$\dd(x',x_n(x',t))=t$,
i.e. the level set in~\eqref{9gbf6thj} enjoys a graph property
(as long as~$\kappa_*$ is sufficiently small).

Indeed,
given~$x'$ as above, we consider the point~$p=(x',h(x'))
\in(\partial\Omega)$ given by the graph property
of~$\partial\Omega$. Notice that~$\dd(p)=0$.
Also, by Lemma~\ref{WK} we know that a tubular neighborhood
of width~$r$ lies in~$\Omega$: points on the upper boundary
of this neighborhood stay at distance~$r>\kappa_*\ge t$
from~$\partial\Omega$. Therefore, by the continuity of the
distance function, we find~$\ell(x',t)\ge0$
such that~$\dd(p+\ell (x',t)e_n)=t$. Notice that
$$ p+\ell(x',t) e_n =(p',p_n+\ell(x',t))=(x', h(x')+\ell(x',t))$$
hence we found a point~$x_n(x',t)=h(x')+\ell(x',t)$
with the desired properties.

We remark that the point~$(x',x_n(x',t))$ is unique
in~$B_{\kappa_*}$. Indeed, suppose by contradiction
that~$(x',x_n)$, $(x',x_n+\xi)\in
B_{\kappa_*}$
satisfy~$t=\dd(x',x_n)=\dd(x',x_n+\xi)$, with~$\xi>0$.
Since the gradient of the distance function
agrees with the normal~$\nu$ of the projection~$\pi:\Omega\to\partial\Omega$
in the vicinity of the boundary, we obtain that
\begin{equation}\label{9898}\begin{split}
&0=\dd(x',x_n+\xi)-\dd(x',x_n)=\xi
\int_0^1 \partial_n \dd(x',x'+\tau \xi )\,d\tau\\ &\qquad=
\xi
\int_0^1 \nu_n (\pi(x',x'+\tau \xi ))\,d\tau.\end{split}\end{equation}
Notice that
$$ \nu_n=\frac{1}{\sqrt{|\nabla h|^2+1}}\ge\frac{1}{\sqrt{K^2+1}},$$
thus~\eqref{9898} implies that
$$ 0\ge \frac{\xi}{\sqrt{K^2+1}},$$
which is a contradiction, that shows the uniqueness
of the value~$x_n(x',t)$.

Now we show the Lipschitz property of such graph.
For this we observe that it also follows from
Lemma~\ref{WK} that the distance function in~$B_\kappa$ is semiconcave
(see e.g. Proposition~2.2.2(iii) in~\cite{cannarsa}), namely
there exists~$C>0$, only depending on~$n$, $\kappa$ and~$K$, such that,
for any~$x$, $y\in B_\kappa$ and any~$\lambda\in[0,1]$,
\begin{equation}\label{8sdfdgjhsgaa}
\lambda \dd(x)+(1-\lambda) \dd(y)-\dd(\lambda x+(1-\lambda)y))\le C\lambda
(1-\lambda)|x-y|^2.\end{equation}
Our goal is now to show that, for any~$x,y\in B_{\kappa_*}$,
with~$\dd(x)=\dd(y)=t$,
we can bound~$|x_n-y_n|$ by $K_* |x'-y'|$, for a suitable~$K_*$.
For this, without loss of generality, up to exchanging
the roles of~$x$ and~$y$, we may suppose that
\begin{equation}\label{8sdfdgjhsgaa-2}
x_n\ge y_n.
\end{equation}
So, fixed~$x$ and~$y$ as above,
we let~$z=y-x$ and we
obtain from~\eqref{8sdfdgjhsgaa} that
$$ t=\lambda t+(1-\lambda)t\le
\dd(x+(1-\lambda)z)+ C(1-\lambda)|z|^2.$$
So we set~$\epsilon=1-\lambda\in[0,1]$ and we obtain that
\begin{equation}\label{0dsfddddg}
t\le \dd(x+\epsilon z)+C\epsilon |z|^2.\end{equation}
Let~$X=(X',X_n)\in\partial\Omega$ such that~$t=\dd(x)=|x-X|$.
Then
$$ x= X +
\frac{t\,(-\nabla h(X'),1)}{\sqrt{|\nabla h(X')|^2+1}}$$
and
\begin{eqnarray*}
\dd(x+\epsilon z) &\le& |(x+\epsilon z)- X|\\
&=& \left|\epsilon z
+\frac{t\,(-\nabla h(X'),1)}{\sqrt{|\nabla h(X')|^2+1}}
\right|,
\end{eqnarray*}
and so
$$ \dd^2(x+\epsilon z)\le \epsilon^2 |z|^2+
t^2 +\frac{2\epsilon t\,z\cdot(-\nabla h(X'),1)}{\sqrt{|\nabla h(X')|^2+1}}.$$
By comparing this and~\eqref{0dsfddddg} we obtain
\begin{eqnarray*}
&& C^2\epsilon^2 |z|^4 -2\epsilon t\,C |z|^2
=\big(t-C\epsilon |z|^2\big)^2-t^2\\
&&\qquad
\le \dd^2(x+\epsilon z)-t^2
\le \epsilon^2|z|^2+\frac{2\epsilon t\,z\cdot(-\nabla h(X'),1)}{\sqrt{|\nabla h(X')|^2+1}}.
\end{eqnarray*}
We divide by~$2\epsilon t$ and then take the limit as~$\epsilon\to0^+$,
hence we obtain
\begin{equation}\label{s54ef} -C |z|^2 \le
\frac{z\cdot(-\nabla h(X'),1)}{\sqrt{|\nabla h(X')|^2+1}}.\end{equation}
Recalling~\eqref{8sdfdgjhsgaa-2}, we also have that~$z_n\le0$,
and therefore~\eqref{s54ef} gives that
\begin{equation}\label{0sdfg4edlll} -C |z|^2
\le \frac{|\nabla h(X')|\,|z'|}{\sqrt{|\nabla h(X')|^2+1}}
+
\frac{z_n}{\sqrt{|\nabla h(X')|^2+1}}
\le |z'| -\frac{|z_n|}{\sqrt{|\nabla h(X')|^2+1}}.\end{equation}
Now we observe that~$|z|\le|x|+|y|\le 2\kappa_*$,
hence
$$ C|z|\le C\kappa_*\le \frac{1}{2\sqrt{2(K^2+1)}}$$
if we choose~$\kappa_*$ conveniently small.
Thus we obtain from~\eqref{0sdfg4edlll} that
$$ \frac{|z_n|}{\sqrt{K^2+1}}\le
\frac{|z_n|}{\sqrt{|\nabla h(X')|^2+1}}\le
|z'|+C|z|^2=|z'|+\frac{|z|}{2\sqrt{2(K^2+1)}}.$$
In addition
\begin{eqnarray*}
&& |z|=\sqrt{|z'|^2+|z_n|^2}\le\sqrt{2\max\{ |z'|^2,|z_n|^2\}}
=\sqrt{2{\max}^2\{ |z'|,|z_n|\}}\\ &&\qquad=\sqrt{2}\max\{ |z'|,|z_n|\}
\le \sqrt{2}\big(|z'|+|z_n|\big),\end{eqnarray*}
therefore
$$ \frac{|z_n|}{\sqrt{K^2+1}}\le
|z'|+\frac{|z'|}{2\sqrt{K^2+1}}
+\frac{|z_n|}{2\sqrt{K^2+1}}$$
and so, by taking the latter term to the left hand side,
$$ \frac{|z_n|}{2\sqrt{K^2+1}}\le
|z'|+\frac{|z'|}{2\sqrt{K^2+1}},$$
which establishes the desired Lipschitz property.
\end{proof}

Next is an auxiliary measure theoretic
result that follows from Corollary~\ref{CR}
and the Coarea Formula:

\begin{prop}\label{appa}
Let~$\Omega\subset\R^n$ and~$p\in\Omega$.
Assume that there exist~$\kappa>0$, $N\in\N\cup\{+\infty\}$ and~$K>0$
such that
\begin{equation}\label{CO-L6}
\begin{split}
& {\mbox{$\partial\Omega$
is covered by a
family of balls~$B_\kappa(x_i)$,
with~$i\in\{1,\cdots,N\}$ and~$x_i\in\partial\Omega$,}}\\
&{\mbox{with the property that~$\partial\Omega\cap B_{8\kappa}(x_i)$
lies in a $C^{1,1}$ graph}}\\
&{\mbox{whose $C^{1,1}$ seminorm is bounded by~$K$,}}
\end{split}\end{equation}
for any~$i\in\{1,\cdots,N\}$.

Then, there exist~$\kappa_*\in(0,\kappa)$,
possibly depending on~$\kappa$ and~$K$,
and~$C>0$, possibly depending on~$n$, such that
for any~$\mu \in(0,\kappa_*]$ we have that
\begin{eqnarray*}&& \big|
\big\{ x\in\R^n {\mbox{ s.t. }}
p+x\in \Omega\cap A_{R_1,R_2,P} {\mbox{ and }}
\dd(p+x)\in[0,\mu]\big\}\big|\\&&\qquad
\le C \mu
\,{\mathcal{H}}^{n-1} \big( (\partial\Omega)
\cap A_{R_1-\mu,R_2+\mu,P}
\big),\end{eqnarray*}
for any
annulus~$A_{R_1,R_2,P}=
B_{R_2}(P)\setminus B_{R_1}(P)$, with~$P\in\R^n$,
and~$R_1, R_2>0$ with~$R_2-R_1>2\mu$.
\end{prop}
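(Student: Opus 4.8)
The plan is to combine the Coarea Formula with the fact, provided by Corollary~\ref{CR}, that near~$\partial\Omega$ the level sets of~$\dd$ are Lipschitz graphs. First I would translate the whole picture by~$-p$ (which preserves Lebesgue measure) and rename~$p=0$, so that the quantity to be estimated is~$|E_\mu|$ with~$E_\mu:=\{y\in\Omega\cap A_{R_1,R_2,P}:\ 0<\dd(y)\le\mu\}$. From the covering hypothesis~\eqref{CO-L6} and Lemma~\ref{WK}, $\Omega$ satisfies a uniform interior ball condition of some radius~$r>0$ depending only on~$\kappa$ and~$K$; consequently, for~$0<t<r$ the nearest--point projection~$\pi$ onto~$\partial\Omega$ is single--valued on~$\{0<\dd<r\}\cap\Omega$, and every such~$y$ can be written~$y=\pi(y)+\dd(y)\,\nu(\pi(y))$, where~$\nu$ is the inner unit normal to~$\partial\Omega$. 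I would fix~$\kappa_*\in(0,\kappa)$ smaller than~$r$ and than the threshold of Corollary~\ref{CR}, so that for every~$t\in(0,\kappa_*]$ the level set~$\{\dd=t\}$ is, around each ball~$B_\kappa(x_i)$, the graph of a~$K_*$--Lipschitz function; in particular it is~$(n-1)$--rectifiable with locally finite~$\mathcal H^{n-1}$ measure, which will be needed to control it via Lipschitz images.

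Next I would invoke the Coarea Formula for the~$1$--Lipschitz function~$\dd$ on~$E_\mu$. Since~$|\nabla\dd|=1$ almost everywhere on~$\{\dd>0\}$, this gives
\[
|E_\mu|=\int_{E_\mu}|\nabla\dd(y)|\,dy=\int_0^\mu \mathcal H^{n-1}\big(\{\dd=t\}\cap\Omega\cap A_{R_1,R_2,P}\big)\,dt ,
\]
so the whole statement reduces to the surface--area comparison
\[
\mathcal H^{n-1}\big(\{\dd=t\}\cap\Omega\cap A_{R_1,R_2,P}\big)\le C\,\mathcal H^{n-1}\big((\partial\Omega)\cap A_{R_1-\mu,R_2+\mu,P}\big),
\]
valid for every~$t\in(0,\mu]\subseteq(0,\kappa_*]$ with~$C$ depending only on~$n,\kappa,K$; integrating this in~$t\in(0,\mu]$ then produces the factor~$C\mu$ on the right.

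To obtain the surface--area comparison I would use the Lipschitz map~$\Phi_t(q):=q+t\,\nu(q)$, $q\in\partial\Omega$. Since every~$y\in\{\dd=t\}\cap\Omega$ (with~$t<r$) equals~$\Phi_t(\pi(y))$, and since~$y\in A_{R_1,R_2,P}$ together with~$|y-\pi(y)|=t\le\mu$ forces~$\pi(y)\in A_{R_1-\mu,R_2+\mu,P}$, we get the inclusion~$\{\dd=t\}\cap\Omega\cap A_{R_1,R_2,P}\subseteq\Phi_t\big((\partial\Omega)\cap A_{R_1-\mu,R_2+\mu,P}\big)$. The covering hypothesis~\eqref{CO-L6} makes~$\nu$ Lipschitz on~$\partial\Omega$ with a constant~$\Lambda=\Lambda(n,\kappa,K)$ (two boundary points within distance~$\kappa$ of each other lie in a common patch~$B_{8\kappa}(x_i)$, on which~$\nu$ is read off from a graph of bounded~$C^{1,1}$ seminorm; for points farther apart one uses the trivial bound~$|\nu(q_1)-\nu(q_2)|\le 2\le(2/\kappa)|q_1-q_2|$), so~$\Phi_t$ is Lipschitz with constant at most~$1+\kappa\Lambda$ and hence satisfies~$\mathcal H^{n-1}(\Phi_t(S))\le(1+\kappa\Lambda)^{n-1}\mathcal H^{n-1}(S)$ for every Borel~$S\subseteq\partial\Omega$. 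Applying this with~$S=(\partial\Omega)\cap A_{R_1-\mu,R_2+\mu,P}$ yields exactly the asserted comparison.

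The step I expect to be the main obstacle is making this surface--area comparison fully rigorous: one must be sure that the tubular coordinates~$y=\pi(y)+\dd(y)\,\nu(\pi(y))$ are valid on the whole relevant neighbourhood (this is where the uniform interior ball radius~$r$ of Lemma~\ref{WK} enters, guaranteeing~$\pi$ single--valued for~$t<r$), that the level sets~$\{\dd=t\}$ are genuinely~$(n-1)$--rectifiable of locally finite measure so that both the Coarea Formula and the image bound~$\mathcal H^{n-1}(\Phi_t(S))\le(\mathrm{Lip}\,\Phi_t)^{n-1}\mathcal H^{n-1}(S)$ apply cleanly (this is Corollary~\ref{CR}), and that~$\mathrm{Lip}\,\Phi_t$ is bounded uniformly in~$t\le\kappa_*$ and independently of the position of~$q$ on~$\partial\Omega$. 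Everything else is routine bookkeeping: the initial translation, the harmless degenerate case~$R_1\le\mu$ (in which~$A_{R_1-\mu,R_2+\mu,P}$ is simply a ball~$B_{R_2+\mu}(P)$), the fact that the hypothesis~$R_2-R_1>2\mu$ only serves to make the enlarged annulus a genuine annulus, and checking that~$\kappa_*$ ends up depending on~$\kappa$ and~$K$ alone.
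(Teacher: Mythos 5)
Your proof is correct, but it takes a genuinely different route from the paper's. Both arguments start with the Coarea Formula for the $1$--Lipschitz function $\dd$, reducing the volume bound to a surface--area comparison between $\{\dd=t\}$ and $\partial\Omega$ in (slightly enlarged) annuli. What differs is how that comparison is established. The paper covers $\partial\Omega$ with a finitely overlapping family of balls of radius $\mu$, applies Corollary~\ref{CR} in each ball to see that $\{\dd=t\}$ is a Lipschitz graph there and hence has $\mathcal H^{n-1}$ measure $\le C\mu^{n-1}$, compares this from below with $\mathcal H^{n-1}((\partial\Omega)\cap B_\mu(y_j))\ge c\mu^{n-1}$, and then sums over the balls using the finite overlap. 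You instead exhibit $\{\dd=t\}\cap\Omega\cap A_{R_1,R_2,P}$ as the image under the normal map $\Phi_t(q)=q+t\,\nu(q)$ of a subset of $(\partial\Omega)\cap A_{R_1-\mu,R_2+\mu,P}$, show that $\Phi_t$ is uniformly Lipschitz because the $C^{1,1}$ patches make $\nu$ Lipschitz on $\partial\Omega$, and then invoke the elementary bound $\mathcal H^{n-1}(\Phi_t(S))\le(\mathrm{Lip}\,\Phi_t)^{n-1}\mathcal H^{n-1}(S)$. Your route avoids the ad hoc covering entirely and is more structural; in particular it bypasses Corollary~\ref{CR} (you cite it, but it is not really needed for your argument --- only Lemma~\ref{WK} for the single--valued projection and the $C^{1,1}$ graphs for the Lipschitz continuity of $\nu$). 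One small point you should make explicit: the Lipschitz constant $1+\kappa\Lambda$ you obtain depends on $K$ through $\Lambda$, whereas the statement allows $C$ to depend only on $n$; this is harmless because choosing $\kappa_*$ small enough (e.g.\ $\kappa_*\Lambda\le1$) makes $(\mathrm{Lip}\,\Phi_t)^{n-1}\le 2^{n-1}$, putting the dependence into $\kappa_*$ rather than $C$. As a second remark, the map $\Phi_t$ should be understood with $\nu$ the \emph{inner} unit normal, and one should note that for $t<r$ (the interior--ball radius) every $y$ with $\dd(y)=t$ lies on some inner normal segment, which is exactly what Lemma~\ref{WK} provides.
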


\begin{proof} We can assume that~$\Omega\cap A_{R_1,R_2,P}
\ne\varnothing$, otherwise we are done.
Also, by possibly translating~$\Omega$, we can suppose that~$p=0$.

We cover~$\partial\Omega$
with a finite overlapping family of balls of radius~$\mu$
centered at points of~$\partial\Omega$, say~$B_{\mu}(y_j)$,
with~$j\in\{1,\cdots,M_\mu\}$, for some~$M_\mu\in\N$.

Notice that each ball~$B_{\mu}(y_j)$ is contained in some~$B_{2\kappa}(x_{i_j})$:
indeed, since~$y_j\in \partial\Omega$,
the covering property implies that there exists~$i_j\in\{1,\dots,N\}$ such
that~$y_j\in B_\kappa(x_{i_j})$; accordingly, if~$q\in B_{\mu}(y_j)$,
then
$$ |q-x_{i_j}|\le |q-y_j|+|y_j-x_{i_j}|< \mu+\kappa\le2\kappa,$$
which says that~$B_{\mu}(y_j)\subseteq B_{2\kappa}(x_{i_j})$.

This implies that we can apply~\eqref{CO-L6}
inside each ball~$B_{\mu}(y_j)$.
As a consequence, by Corollary~\ref{CR}
the level sets of the distance function in~$B_{\mu}(y_j)$
are Lipschitz graphs
with respect to the tangent hyperplane of~$\partial\Omega$
at~$y_j$,
therefore
$$ {\mathcal{H}}^{n-1}\big(
\big\{ x\in \Omega\cap B_\mu(y_j){\mbox{ s.t. }}
\dd(x)=t\big\}\big)\le C\mu^{n-1},$$
for some~$C>0$.
On the other hand~$(\partial\Omega)\cap
B_\mu(y_j)$ is also a $C^{1,1}$ graph with respect
to the tangent hyperplane of~$\partial\Omega$
at~$y_j$ and so
$$ {\mathcal{H}}^{n-1}\big(
(\partial\Omega)\cap
B_\mu(y_j)\big)\ge c\mu^{n-1},$$
for some~$c>0$. By comparing the latter two formulas,
and possibly renaming~$C>0$, we conclude that
\begin{equation}\label{09idfbghfr99tye}
{\mathcal{H}}^{n-1}\big(
\big\{ x\in \Omega\cap B_\mu(y_j){\mbox{ s.t. }}
\dd(x)=t\big\}\big)
\leq C\,
{\mathcal{H}}^{n-1}\big(
(\partial\Omega)\cap
B_\mu(y_j)\big).
\end{equation}
Let us now reorder the indices in such a way the
balls~$B_{\mu}(y_1),\cdots,B_{\mu}(y_{L_\mu})$
intersect the annulus~$A_{R_1,R_2,P}$, for some~$L_\mu\in\N$, $L_\mu\le M_\mu$.
The finite overlapping property of the covering gives that
$$\sum_{j=1}^{L_\mu}
{\mathcal{H}}^{n-1}
\left( (\partial\Omega)\cap
B_\mu(y_j)
\right) \le C\,
{\mathcal{H}}^{n-1}
\left( (\partial\Omega)\cap\left( \bigcup_{j=1}^{L_\mu} B_\mu(y_j)
\right) \right)$$
and so, by set inclusions,
\begin{equation}\label{98io}
\sum_{j=1}^{L_\mu}
{\mathcal{H}}^{n-1}
\left( (\partial\Omega)\cap
B_\mu(y_j)
\right) \le C\,
{\mathcal{H}}^{n-1}
\left( (\partial\Omega)\cap A_{R_1-\mu,R_2+\mu,P}
\right).\end{equation}
Furthermore,
the gradient of the distance function agrees
with the normal of the projection in the vicinity of the boundary
(hence it has modulus~$1$), so
we use the Coarea Formula and~\eqref{09idfbghfr99tye}
to obtain that
\begin{eqnarray*}
&& \big|
\big\{ x\in\Omega\cap A_{R_1,R_2,P}
{\mbox{ s.t. }}
\dd(x)\in[0,\mu]\big\}\big|\\
&\le&\sum_{j=1}^{L_\mu}
\big|
\big\{ x\in\Omega\cap B_\mu(y_j){\mbox{ s.t. }}
\dd(x)\in[0,\mu]\big\}\big|\\
&=& \sum_{j=1}^{L_\mu}\int_{
\big\{ x\in \Omega\cap B_\mu(y_j){\mbox{ s.t. }}
\dd(x)\in[0,\mu]\big\}
} \,dx\\
&=&\sum_{j=1}^{L_\mu} \int_{
\big\{ x\in \Omega\cap B_\mu(y_j){\mbox{ s.t. }}
\dd(x)\in[0,\mu]\big\}
} |\nabla \dd(x)|\,dx\\
&=& \sum_{j=1}^{L_\mu}\int_0^\mu
{\mathcal{H}}^{n-1}\big(
\big\{ x\in \Omega\cap B_\mu(y_j){\mbox{ s.t. }}
\dd(x)=t\big\}\big)\,dt
\\ &\le&
C \sum_{j=1}^{L_\mu} \int_0^\mu
{\mathcal{H}}^{n-1}\big( (\partial\Omega)\cap
B_\mu(y_j)\big)\,dt\\
&\le& C \mu \sum_{j=1}^{L_\mu}
{\mathcal{H}}^{n-1}\big( (\partial\Omega)\cap
B_\mu(y_j)\big).\end{eqnarray*}
This and~\eqref{98io} imply the desired result.
\end{proof}

When condition~\eqref{CO-L6} is fulfilled,
it is also possible to control the surface of~$\partial\Omega$
inside an annulus with the ``correct power'' of the
size of the annulus itself.
A precise statement goes as follows:

\begin{lem}\label{app0987uv}
Let~$\Omega\subset\R^n$ and
assume that there exist~$\kappa>0$, $N\in\N\cup\{+\infty\}$ and~$K>0$
such that $\partial\Omega$
is covered by a
family of balls~$B_\kappa(x_i)$,
with~$i\in\{1,\cdots,N\}$ and~$x_i\in\partial\Omega$,
with the property that~$\partial\Omega\cap B_{8\kappa}(x_i)$
lies in a $C^{1,1}$ graph
whose $C^{1,1}$ seminorm is bounded by~$K$,
for any~$i\in\{1,\cdots,N\}$.

Suppose also that~$\Omega$ is bounded, with diameter less than~$D$.
Then, there exists~$C>0$,
possibly depending on~$\kappa$, $K$ and~$D$, such that
\begin{equation}\label{Goal}
{\mathcal{H}}^{n-1} \big(( \partial\Omega)\cap A_R\big)\le C R^{n-1},
\end{equation}
for any~$R>0$, where~$A_R=B_{8R}\setminus B_R$.
\end{lem}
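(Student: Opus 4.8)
The plan is to split the estimate according to whether $R$ is large or small relative to $\kappa$, and in both regimes to reduce everything to one elementary fact: a piece of $\partial\Omega$ contained in a ball of radius $\rho$ that fits inside one of the charts $B_{8\kappa}(x_i)$ has $(n-1)$-dimensional measure at most $C\rho^{n-1}$. So the first step is to record this \textbf{uniform local surface bound}. For each $i$, the hypothesis describes $(\partial\Omega)\cap B_{8\kappa}(x_i)$ as a graph in a suitable direction; after normalizing the chart (e.g.\ taking the graph direction to be the normal to $\partial\Omega$ at $x_i$), the $C^{1,1}$ seminorm bound forces $|\nabla h|\le C\kappa K$ on the chart, so the graph is Lipschitz with constant depending only on $\kappa$ and $K$. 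Hence $\mathcal H^{n-1}\big((\partial\Omega)\cap B_\rho(y)\big)\le C\rho^{n-1}$, with $C=C(\kappa,K)$, whenever $y\in\partial\Omega$ and $B_\rho(y)\subseteq B_{8\kappa}(x_i)$ for some $i$.

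Next I would \textbf{bound the total surface area} $\mathcal H^{n-1}(\partial\Omega)$. Since $\Omega$ has diameter less than $D$, $\partial\Omega$ is compact; choose a maximal $\kappa$-separated subset $\{z_1,\dots,z_m\}\subseteq\partial\Omega$. The balls $B_{\kappa/2}(z_j)$ are pairwise disjoint and contained in a single ball of radius $D+\tfrac{\kappa}{2}$, so a volume count gives $m\le C(\kappa,D)$; on the other hand, by maximality the $B_\kappa(z_j)$ cover $\partial\Omega$. For each $j$, since $z_j\in\partial\Omega$ there is $i(j)$ with $z_j\in B_\kappa(x_{i(j)})$, whence $B_\kappa(z_j)\subseteq B_{2\kappa}(x_{i(j)})\subseteq B_{8\kappa}(x_{i(j)})$, and the first step yields $\mathcal H^{n-1}\big((\partial\Omega)\cap B_\kappa(z_j)\big)\le C(\kappa,K)\kappa^{n-1}$. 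Summing over $j$ gives $\mathcal H^{n-1}(\partial\Omega)\le C_0$ with $C_0=C_0(\kappa,K,D)<\infty$.

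Finally I would \textbf{prove \eqref{Goal}} by cases. If $R\ge \kappa/4$, then trivially $\mathcal H^{n-1}\big((\partial\Omega)\cap A_R\big)\le \mathcal H^{n-1}(\partial\Omega)\le C_0\le C_0(\kappa/4)^{1-n}\,R^{n-1}$. If $R<\kappa/4$ and $(\partial\Omega)\cap A_R\ne\varnothing$, pick $q\in(\partial\Omega)\cap A_R$, so $|q|<8R$; then $(\partial\Omega)\cap A_R\subseteq(\partial\Omega)\cap B_{8R}\subseteq(\partial\Omega)\cap B_{16R}(q)$. Since $q\in\partial\Omega$ there is $i$ with $q\in B_\kappa(x_i)$, and $16R+\kappa<4\kappa+\kappa<8\kappa$ gives $B_{16R}(q)\subseteq B_{8\kappa}(x_i)$, so the first step yields $\mathcal H^{n-1}\big((\partial\Omega)\cap A_R\big)\le C(\kappa,K)(16R)^{n-1}\le C\,R^{n-1}$ (and if the intersection is empty the bound is vacuous). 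Combining the two cases proves \eqref{Goal} with $C=C(\kappa,K,D)$.

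The only mildly delicate point is the first step, namely that the $C^{1,1}$ data really yield a Lipschitz graph on the full scale $8\kappa$ with a constant controlled by $\kappa$ and $K$ — this is why I normalize each chart by the tangent plane at $x_i$; all the remaining ingredients are packing estimates and monotonicity of $\mathcal H^{n-1}$ under inclusion.
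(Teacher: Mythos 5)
Your proof is correct and follows the same overall strategy as the paper's: establish a local surface-area bound from the $C^{1,1}$ chart hypothesis, and then split into $R$ small versus $R$ large. The differences are in bookkeeping, and in both places your version is cleaner. For the large-$R$ regime the paper partitions $\R^n$ into a net of cubes of side $\kappa/n$ and counts, by a layer-by-layer nesting argument around a fixed cube meeting $\partial\Omega$ and using the diameter bound, how many cubes can intersect $\partial\Omega$; your maximal $\kappa$-separated packing with the volume count inside $B_{D+\kappa/2}$ reaches the same conclusion in one line, and bounding the full $\mathcal H^{n-1}(\partial\Omega)$ once and for all is more transparent. For the small-$R$ regime the paper covers $A_R$ with a fixed (universal) number of balls of radius $R/4$ obtained by scaling a cover of $B_8\setminus B_1$, while you just swallow $(\partial\Omega)\cap A_R$ in the single ball $B_{16R}(q)$ around any $q\in(\partial\Omega)\cap A_R$ and verify that it fits in one chart; this avoids the covering step entirely. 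One small remark on your first step: the paper tacitly reads ``$C^{1,1}$ seminorm bounded by $K$'' as also giving a bound on $|\nabla h|$ (consistent with the $\|\nabla h\|_{C^{0,1}}\le K$ hypothesis of Lemma~\ref{WK}), so it does not bother to normalize the chart direction; your normalization by the tangent plane at $x_i$, producing $|\nabla h|\le C\kappa K$, makes the Lipschitz constant of the graph explicit and is if anything more careful.
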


\begin{proof}
First of all, we show that for any~$r\in (0,\kappa/2]$ and any~$p\in\R^n$,
\begin{equation}\label{erg4twgkjhgfdg}
{\mathcal{H}}^{n-1} \big(( \partial\Omega)\cap B_r(p)\big)\le Cr^{n-1}.
\end{equation}
To prove this, we may suppose that~$( \partial\Omega)\cap B_r(p)\ne
\varnothing$, otherwise we are done. Hence, let~$q\in
( \partial\Omega)\cap B_r(p)$. By assumption, there exists~$i\in\{1,\cdots,
N\}$ such that~$q\in B_\kappa(x_i)$. We observe that
if~$y\in B_r(p)$ then
$$|y-x_i|\le|y-p|+|p-q|+|q-x_i|< r+r+\kappa\le 2\kappa,$$
hence~$B_r(p)\subseteq B_\kappa (x_i)$.

Consequently, $(\partial\Omega)\cap B_r(p)$ lies in a Lipschitz
graph, with Lipschitz seminorm controlled by~$K$
and thus
$$ {\mathcal{H}}^{n-1} \big(( \partial\Omega)\cap B_r(p)\big)
\le \int_{\{x'\in\R^{n-1} {\mbox{ s.t. }} |x'|\le r\}}
\sqrt{K^2+1}\,dx'\le Cr^{n-1},$$
and this proves~\eqref{erg4twgkjhgfdg}.

Now we complete the proof of~\eqref{Goal}. We distinguish two cases:
either~$R\le\kappa/2$ or~$R>\kappa/2$.

If~$R\le\kappa/2$, we cover~$B_8\setminus B_1$ by a
family of balls of radius~$1/4$. By scaling, this provides
a finite number of balls of radius~$R/4$ that cover~$A_R$,
say~$B_{R/4}(q_1),\cdots,B_{R/4}(q_M)$
(notice that~$M$ is a fixed, universal number).
Then, by~\eqref{erg4twgkjhgfdg},
$$ {\mathcal{H}}^{n-1} \big(( \partial\Omega)\cap A_R\big)
\le\sum_{i=1}^M
{\mathcal{H}}^{n-1} \big(( \partial\Omega)\cap B_{R/4}(q_i)\big)
\le C R^{n-1},$$
which proves~\eqref{Goal} in this case.

Thus, we now deal with the case~$R>\kappa/2$.
For this, we consider a non overlapping partition of~$\R^n$
into adjacent (closed) cubes of side~$\kappa/n$
(in jargon, a $\kappa/n$-net of~$\R^n$).
Notice that the number of these cubes
needed to cover~$A_R$ in this case depends on~$\kappa$ (which is fixed
for our purposes, since the constant~$C$ in~\eqref{Goal} is allowed
to depend on~$\kappa$)
but also on~$R$, therefore we need a more careful argument
to bound the number of such cubes that really
play a role in our estimates.
Indeed, we claim that
\begin{equation}\label{NUMBER}\begin{split}&
{\mbox{the number of cubes which intersect
$\partial\Omega$ is bounded}}\\&{\mbox{by
some~$C_o>0$ which depends
only on~$D$ and~$\kappa$.}}\end{split}\end{equation}
To prove this, we may suppose that there is a cube~$Q_*$,
that intersects~$\partial\Omega$, otherwise~\eqref{NUMBER}
is true and we are done. So let~$P_*\in (\partial\Omega)\cap Q_*$.
Let~${\mathcal{F}}_0=\{ Q_*\}$ and~$U_0=Q_*$.
Then, we define~${\mathcal{F}}_1$ the
set all the cubes adjacent to~$U_0=Q_*$,
and we let
$$ U_1 = \bigcup_{Q\in {\mathcal{F}}_0\cup {\mathcal{F}}_1 } Q.$$
Then, we let~${\mathcal{F}}_2$ the set of cubes
adjacent to~$U_1$ and we set
$$ U_2 = \bigcup_{Q\in {\mathcal{F}}_0\cup {\mathcal{F}}_1
\cup {\mathcal{F}}_2} Q,$$
and so on, iteratively, we let~${\mathcal{F}}_{i+1}$ the set of cubes
adjacent to~$U_i$
and
$$ U_{i+1} = \bigcup_{Q\in {\mathcal{F}}_0\cup \cdots
\cup {\mathcal{F}}_{i+1}} Q.$$
Notice that~$U_i$ is a cube of side~$(2i+1)\kappa/n$,
that has~$Q_*$ ``in its center'', that is
$$ {\rm dist}\,(\partial U_i , Q_*)=
\inf_{a\in\partial U_i ,\,b\in Q_*}|a-b|=i.$$
Also, if~$Q_\sharp\in {\mathcal{F}}_i$
is such that~$(\partial\Omega)\cap Q_\sharp\ne\varnothing$,
namely there exists~$P_\sharp \in (\partial\Omega)\cap Q_\sharp$,
then we have that~$|P_\sharp-P_*|\le D$, thanks to the property of
the diameter. Also,
\begin{eqnarray*}&&{\rm dist}\,(P_\sharp, Q_*)\ge {\rm dist}\,(\partial U_{i-1} , Q_*)
=i-1\\
{\mbox{and }}&&\sup_{a\in\partial Q_*}|P_* -a|
\le \kappa.\end{eqnarray*}
Therefore
$$ D\ge |P_\sharp-P_*|\ge i-1-\kappa,$$
hence~$i\le D+1+\kappa$.

This means that the cubes that intersect~$\partial \Omega$
lie in~$U_i$, with~$i\le D+1+\kappa$.
Since~$U_i$ contains~$(2i+1)^n/n^n$ cubes,
the number of cubes of the net which intersect~$\partial \Omega$
is at most~$(2(D+1+\kappa)+1)^n/n^n$, which proves~\eqref{NUMBER}.

Furthermore, if~$Q$ is a cube of the family,
we have that~$Q$ is contained in the ball of radius~$\kappa$
with the same center of~$Q$: hence, by~\eqref{erg4twgkjhgfdg},
$$ {\mathcal{H}}^{n-1} \big(( \partial\Omega)\cap Q\big)\le C\kappa^{n-1}.$$
{F}rom this and~\eqref{NUMBER}, we obtain that
\begin{eqnarray*}
&& {\mathcal{H}}^{n-1} \big(( \partial\Omega)\cap A_R\big)
\le \sum_{Q {\mbox{ s.t. }} ( \partial\Omega)\cap Q\ne\varnothing }
{\mathcal{H}}^{n-1} \big(( \partial\Omega)\cap Q\big)
\\ &&\qquad\le \sum_{Q {\mbox{ s.t. }} ( \partial\Omega)\cap Q\ne\varnothing }
C\kappa^{n-1}\le
C_o C\kappa^{n-1}.\end{eqnarray*}
Then, since we are assuming in this case that~$R>\kappa/2$,
$$
{\mathcal{H}}^{n-1} \big(( \partial\Omega)\cap A_R\big)\le 2^{n-1}
C_o C R^{n-1},$$
which proves~\eqref{Goal} also in this case, up to renaming constants.
\end{proof}

\end{appendix}

\end{document}